\newcommand{\bbV}{{\mathcal V}}
\newcommand{\bbI}{{\mathcal I}}
\newcommand{\bbH}{{\mathcal H}}
\newcommand{\remove}[1]{}
\newtheorem{observation}{Observation}
\newtheorem{assumption}{Assumption}
\begin{document}
\title{Disjoint Empty Convex Pentagons in Planar Point Sets}

\author{Bhaswar B. Bhattacharya$^1$\and Sandip Das$^2$}
\institute
{$^1$ \small {Indian Statistical Institute, Kolkata, India, {\tt bhaswar.bhattacharya@gmail.com}}\\
$^2$ \small{Advanced Computing and Microelectronics Unit, Indian Statistical Institute, Kolkata, India,\\
{\tt sandipdas@isical.ac.in}}}

\maketitle

\begin{abstract}
Harborth [{\it Elemente der Mathematik}, Vol. 33 (5), 116--118, 1978] proved
that every set of 10 points in the plane, no three on a line, contains an empty
convex pentagon. From this it follows that the number of disjoint empty convex
pentagons in any set of $n$ points in the plane is least
$\lfloor\frac{n}{10}\rfloor$. In this paper we prove that every set of 19
points in the plane, no three on a line, contains two disjoint empty convex
pentagons. We also show that any set of $2m+9$ points in the plane, where $m$
is a positive integer, can be subdivided into three disjoint convex regions,
two of which contains $m$ points each, and another contains a set of 9 points
containing an empty convex pentagon. Combining these two results, we obtain
non-trivial lower bounds on the number of disjoint empty convex pentagons in
planar points sets. We show that the number of disjoint empty convex pentagons
in any set of $n$ points in the plane, no three on a line, is at least
$\lfloor\frac{5n}{47}\rfloor$. This bound has been further improved to
$\frac{3n-1}{28}$ for infinitely many $n$. 
\\

\noindent {\bf Keywords.} Convex hull, Discrete geometry, Empty convex polygons, Erd\H os-Szekeres
theorem, Pentagons.
\end{abstract}

\section{Introduction}
\label{sec:intro}

The origin of the problems concerning the existence of empty convex polygons
goes back to the famous theorem due to Erd\H os and Szekeres \cite{erdos}. It
states that for every positive integer $m\geq 3$, there exits a smallest
integer $ES(m)$, such that any set of $n$ points $ (n \geq ES(m))$ in the
plane, no three on a line, contains a subset of $m$ points which lie on the
vertices of a convex polygon.  Evaluating the exact value of $ES(m)$ is a long
standing open problem. A construction due to Erd\H os \cite{erdosz} shows that
$ES(m)\geq 2^{m-2} +1$, which is also conjectured to be sharp. It is known that
$ES(4)=5$ and $ES(5)=9$ \cite{kalb}. Following a long computer search, Szekeres
and Peters \cite{szekeres} recently proved that $ES(6)=17$. The value of
$ES(m)$ is unknown for all $m> 6$. The best known upper bound for $m\geq 7$ is
due to T\'oth and Valtr \cite{tothvaltr} - $ES(m) \leq {{2m-5}\choose {m-3}}+1$.
For a more detailed description of the Erd\H os-Szekeres theorem and its
numerous ramifications see the surveys by B\'ar\'any and K\'arolyi
\cite{problemsandresults} and Morris and Soltan \cite{survey}.

In 1978, Erd\H os \cite{erdosempty} asked whether for every positive integer
$k$, there exists a smallest integer $H(k)$, such that any set of at least
$H(k)$ points in the plane, no three on a line, contains $k$ points which lie
on the vertices of a convex polygon whose interior contains no points of the set.
Such a subset is called an {\it empty convex $k$-gon} or a {\it k-hole}. Esther
Klein showed {$ H(4)=5$} and Harborth \cite{harborth} proved that {$ H(5)=10$}.
Horton \cite{horton} showed that it is possible to construct arbitrarily large
set of points without a 7-hole, thereby proving that {$ H(k) $} does not exist
for {$ k \geq 7$}. Recently, after a long wait, the existence of $ H(6)$ has
been proved by Gerken \cite{gerken} and independently by Nicol\'as
\cite{nicolas}. Later Valtr \cite{valtrhexagon} gave a simpler version of
Gerken's proof. For results regarding the number of $k$-holes in planar point
sets and other related problems see \cite{baranycanadianbulletin,problemsandresults,baranyvaltr,dumitrescu,sharir}.
Existence of a hole of any fixed size in sufficiently large point sets, with some additional restrictions on the point sets, has been studied by
K\'arolyi et al. \cite{karolyi_almost_empty_exact,karolyi_modular}, Kun and Lippner \cite{kun_lippner}, and Valtr \cite{valtr_sufficient}.

Two empty convex polygons are said to be {\it disjoint} if their convex hulls
do not intersect. For positive integers $k\leq \ell$, denote by $H(k,\ell)$ the smallest integer
such that any set of $H(k, \ell)$ points in the plane, no three on a line,
contains both a $k$-hole and a $\ell$-hole which are disjoint. Clearly,
$H(3,3)=6$ and Horton's result \cite{horton} implies that $H(k, \ell)$ does not
exist for all $\ell\geq 7$. Urabe \cite{urabedam} showed that $H(3, 4)=7$,
while Hosono and Urabe \cite{urabecgta} showed that $H(4,4)=9$. Hosono and
Urabe \cite{hosono} also proved that $H(3,5)=10$, $12\leq H(4,5)\leq 14$, and
$16\leq H(5,5)\leq 20$. The results $H(3, 4)=7$ and $H(4, 5)\leq 14$ were later
reconfirmed by Wu and Ding \cite{reconfirmation}. Using the
computer-aided order-type enumeration method, Aichholzer et al. \cite{toth}
proved that every set of 11 points in the plane, no three on a line, contains
either a 6-hole or a 5-hole and a disjoint 4-hole. Recently, this result was
proved geometrically by Bhattacharya and Das
\cite{bbbsd11point1,bbbsd11point2}. Using this Ramsey-type result, Hosono and
Urabe \cite{kyotocggt} proved that $H(4, 5)\leq 13$, which was later
tightened to $H(4, 5)=12$ by Bhattacharya and Das \cite{bbbsdqp}.
Hosono and Urabe \cite{kyotocggt} have also improved the lower bound on $H(5,
5)$ to 17.

The problems concerning disjoint holes was, in fact, first studied by Urabe
\cite{urabedam} while addressing the problem of partitioning of planar point
sets. For any set $S$ of points in the plane, denote by $CH(S)$ the {\it convex
hull} of $S$. Given a set $S$ of $n$ points in the plane, no three on a line, a
{\it disjoint convex partition} of $S$ is a partition of $S$ into subsets $S_1,
S_2, \ldots S_t$, with $\sum_{i=1}^t |S_i| = n$, such that for each $i\in\{1,
2, \ldots, t\}$, $CH(S_i)$ forms a $|S_i|$-gon and $CH(S_i)\cap
CH(S_j)=\emptyset$, for any pair of indices $i, j$. Observe that in any
disjoint convex partition of $S$, the set $S_i$ forms a $|S_i|$-hole and the
holes formed by the sets $S_i$ and $S_j$ are disjoint for any pair of distinct
indices $i, j$.
If $F(S)$ denote the minimum number of disjoint holes in any disjoint convex
partition of $S$, then $F(n)= \max_S F(S)$, where the maximum is taken over all
sets $S$ of $n$ points, is called the {\it disjoint convex partition number}
for all sets of fixed size $n$. The disjoint convex partition number $F(n)$ is
bounded by $\lceil\frac{n-1}{4}\rceil\leq F(n)\leq\lceil{\frac{5n}{18}}\rceil$.
The lower bound is by Urabe \cite{urabedam} and the upper bound by Hosono and
Urabe \cite{urabecgta}. The proof of the upper bound uses the fact that every
set of 7 points in the plane contains a 3-hole and a disjoint 4-hole. Later, Xu
and Ding \cite{empty_c_annals} improved the lower bound to
$\lceil\frac{n+1}{4}\rceil$. Recently, Aichholzer et al. \cite{toth} introduced
the notion pseudo-convex partitioning of planar point sets, which extends the
concept partitioning, in the sense, that they allow both convex polygons and
pseudo-triangles in the partition.

Urabe \cite{urabecgta} also defined the function $F_k(n)$ as the minimum number
of pairwise disjoint $k$-holes in any $n$-element point set. If $F_k(S)$ denotes the number of $k$-holes in a disjoint partition of $S$, then $F_k(n) =\min_S\{\max_{\pi_d}F_k(S)\}\}$,
where the maximum is taken over all disjoint partitions $\pi_d$ of $S$, and the minimum is
taken over all sets $S$ with $|S|=n$. Hosono and Urabe \cite{urabecgta}
proved any set of 9 points, no three on a line, contains two disjoint 4-holes.
They also showed any set of $2m+4$ points can be divided into three disjoint
convex regions, one containing a 4-hole and the others containing $m$ points each.
Combining these two results they proved
$F_4(n)\geq\lfloor\frac{5n}{22}\rfloor$. This bound can be improved to
$(3n-1)/13$ for infinitely many $n$.

The problem, however, appears to be much more complicated in the case of
disjoint 5-holes. Harborth's result \cite{harborth} implies
$F_5(n)\geq\lfloor\frac{n}{10}\rfloor$, which, to the best our knowledge, is
the only known lower bound on this number. A construction by Hosono and Urabe
\cite{kyotocggt} shows that $F_5(n) \leq 1$ if $n \leq 16$. In general, it is
known that $F_5(n) < n/6$ \cite{problemsandresults}. Moreover, Hosono and Urabe
\cite{urabecgta} states the impossibility of an analogous result for 5-holes
with $2m + 5$ points.

In this paper, following a couple of new results for small point sets, we prove
non-trivial lower bounds on $F_5(n)$. At first, we show that every set of 19
points in the plane, no three on a line, contains two disjoint 5-holes. In
other words, this implies, $F_5(19)\geq 2$ or $H(5, 5)\leq 19$. Drawing
parallel from the result of Hosono and Urabe \cite{urabecgta}, we also show
that any set of $2m+9$ points in the plane, where $m$ is a positive integer,
can be subdivided into three disjoint convex regions, two of which contains $m$
points each, and the third one is a set of 9 points containing a 5-hole. Combining these
two results, we prove $F_5(n)\geq\lfloor\frac{5n}{47}\rfloor$. This bound can
be further improved to $\frac{3n-1}{28}$ for infinitely many $n$. The proofs
rely on a series of results concerning the existence of 5-holes in planar point
sets having less than 10 points.

The paper is organized as follows. The results proving the existence of 5-holes
in point sets having less than 10 points, and the characterization of 9-point
sets not containing any 5-hole are presented in Section \ref{smallpointsets}.
In Section \ref{mainresults}, we give the formal statements of our main results
and use them to prove lower bounds on $F_5(n)$. The proofs of the 19-point
result and the $2m+9$-point partitioning theorem are presented in Sections
\ref{proofnineteen} and \ref{proof2m+9}, respectively. In Section \ref{dn} we
introduce notations and definitions and in Section \ref{c:conclusion} we
summarize our work and provide some directions for future work.

\section{Notations and Definitions}
\label{dn}

We first introduce the definitions and notations required for the remainder of
the paper. Let $S$ be a finite set of points in the plane in general position,
that is, no three on a line. Denote the convex hull of $S$ by $CH(S)$. The
boundary vertices of $CH(S)$, and the points of $S$ in the interior of $CH(S)$
are denoted by $\bbV(CH(S))$ and $\bbI (CH(S))$, respectively. A region $R$
in the plane is said to be empty in $S$, if $R$ contains no elements of $S$. A
point $p \in S$ is said to be {\it $k$-redundant} in a subset $T$ of $S$, if
there exists a $k$-hole in $T \backslash \{p\}$.

By $\mathcal P=p_1p_2\ldots p_k$ we denote a convex $k$-gon with vertices $p_1, p_2,
\ldots, p_k$ taken in the counter-clockwise order. $\bbV(\mathcal P)$ denotes the set
of vertices of $\mathcal P$ and $\bbI(\mathcal P)$ the interior of $\mathcal P$.


The {\it $j$-th convex layer} of $S$, denoted by $L\{j, S\}$, is the set of
points  that lie on the boundary of $CH(S\backslash\{\bigcup_{i=1}^{j-1}L\{i,
S\}\})$, where $L\{1, S\}=\bbV(CH(S))$.
If $p,~q\in S$ are such that ${pq}$ is an edge of the convex hull of the $j$-th
layer, then the open halfplane bounded by the line $pq$ and not containing any
point of $S\backslash\{\bigcup_{i=1}^{j-1}L\{i, S\}\}$ will be referred to as
the {\it outer} halfplane induced by the edge ${pq}$.

For any three points $p, q, r \in S$, $\bbH(pq, r)$ (respectively $\mathcal
H_c(pq, r)$) denotes the open (respectively closed) halfplane bounded by the
line $pq$ containing the point $r$. Similarly, $\overline{\bbH}(pq, r)$
(respectively $\overline \mathcal H_c(pq, r)$) is the open (respectively
closed) halfplane bounded by $pq$ not containing the point $r$.

Moreover, if $p, q, r\in S$ is such that $\angle rpq < \pi$, then $Cone(rpq)$
is the set of points in $\mathbb R^2$ which lies in the interior of the angular
domain $\angle rpq$. A point $s \in Cone(rpq)\cap S$ is called the {\it nearest
angular neighbor} of $\overrightarrow{pq}$ in $Cone(rpq)$ if $Cone(spq)$ is
empty in $S$. In general, whenever we have a convex region $R$, we think of $R$
as the set of points in $\mathbb R^2$ which lies in the region $R$. Thus, for
any convex region $R$ a point $s \in R\cap S$ is called the {\it nearest
angular neighbor} of $\overrightarrow{pq}$ in $R$ if $Cone(spq)\cap R$ is empty
in $S$. More generally, for any positive integer $k$, a point $s\in S$ is
called the {\it $k$-th angular neighbor} of $\overrightarrow{pq}$ whenever
$Cone(spq)\cap R$ contains exactly $k-1$ points of $S$ in its interior. Also,
for any convex region $R$, the point $s\in S$, which has the shortest
perpendicular distance to the line $pq$, $p, q\in S$, is called the {\it
nearest neighbor} of $pq$ in $R$.

\section{5-Holes With Less Than 10 Points}
\label{smallpointsets}

We begin by restating a well known result regarding the existence of 5-holes in
planar point sets.

\begin{lemma}\cite{matousek} Any set of points in general position containing a convex hexagon, contains a 5-hole.
\label{lm:lm1}
\end{lemma}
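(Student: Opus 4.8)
The plan is to fix a convex hexagon $v_1v_2v_3v_4v_5v_6$ guaranteed in $S$ and to work entirely inside the closed region $CH(\{v_1,\dots,v_6\})$, since only the points of $S$ lying in $\bbI(v_1v_2v_3v_4v_5v_6)$ can obstruct a $5$-hole. If this interior is empty, the hexagon is a $6$-hole and any five of its vertices, say $v_1v_2v_3v_4v_5$, already form a $5$-hole. I would therefore induct on the number $m=|\bbI(v_1v_2\cdots v_6)\cap S|$ of interior points, the base case $m=0$ being exactly this observation, and in the inductive step try to produce either a $5$-hole outright or a convex hexagon in $S$ with strictly fewer interior points.

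The engine of the induction is a vertex-replacement move. For each $i$ the short diagonal $v_{i-1}v_{i+1}$ (indices mod $6$) cuts off the ear triangle $v_{i-1}v_iv_{i+1}$ from the hexagon. If some interior point $p$ lies in such an ear, then the six points $\{p\}\cup\{v_j: j\neq i\}$ are in convex position: replacing $v_i$ by $p$ yields a convex hexagon $\ldots v_{i-1}\,p\,v_{i+1}\ldots$ contained in the original one, with $p$ promoted to a vertex and hence removed from the interior, so its interior count is strictly smaller and the inductive hypothesis applies. Thus I only need to resolve the case in which every interior point avoids all six ears, i.e.\ lies in the central region $C$ bounded by the short diagonals $v_1v_3,v_2v_4,\dots,v_6v_2$.

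In that central case I would pass to a convex pentagon and build the hole directly. Dropping a single vertex, say $v_6$, leaves the convex pentagon $v_1v_2v_3v_4v_5$, which still contains all of $C$ and hence all remaining interior points. Using the machinery of Section~\ref{dn}, let $p$ be the nearest neighbor of the edge $v_1v_2$ inside this pentagon; then the strip between $v_1v_2$ and the parallel line through $p$ is empty, so $\triangle v_1v_2p$ is empty, and I would grow this empty triangle into an empty convex pentagon by repeatedly attaching nearest angular neighbors of the appropriate directed edges within the cones determined by $p$, $v_1$ and $v_2$. The six rays from any interior point to the hexagon vertices, whose angles sum to $2\pi$, guarantee by averaging that four consecutive vertices together with that point subtend less than $\pi$ and therefore span a convex pentagon, which keeps the construction from stalling.

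The hard part will be this central case: unlike the ear case, there is no uniform replacement, and a convex pentagon drawn from a pentagon-with-interior-points need not reduce --- the regular pentagon with a single central point shows that six points (five convex, one interior) may contain no $5$-hole at all, so the argument must genuinely exploit the sixth hexagon vertex. I expect the delicate work to be the case analysis verifying, in every position of the nearest point $p$ relative to the diagonals of $v_1v_2v_3v_4v_5$ and to the spare vertex $v_6$, that the angular-neighbor construction closes up to a convex pentagon whose interior is empty; the ear-replacement induction and the empty-triangle seeding are comparatively routine.
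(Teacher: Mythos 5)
You should first know that the paper does not prove this lemma at all --- it is stated with a citation to Matou\v sek \cite{matousek} --- so your proposal can only be judged on its own terms. On those terms, the parts you actually carry out are sound. The ear-replacement move is correct: for $p$ in the ear cut off by the short diagonal $v_{i-1}v_{i+1}$, convexity of the new hexagon at $v_{i-1}$ and $v_{i+1}$ follows from $p$ being interior to the old hexagon, convexity at $p$ from $p$ lying beyond the diagonal, and since the new hexagon is contained in the old one and $v_i$ (an extreme point of the old hexagon, not a vertex of the new) drops out, the interior count strictly decreases. So the induction legitimately reduces everything to the case where all $m$ interior points lie in the central region $C$. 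Your averaging claim is also fine: the six triples of consecutive angles at an interior point $x$ have total $6\pi$, so some triple is less than $\pi$ (equality is excluded by general position), giving a convex pentagon $xv_iv_{i+1}v_{i+2}v_{i+3}$; and when $m=1$ this pentagon is automatically \emph{empty}, since its interior lies inside the hexagon and the two omitted vertices are extreme points of the hexagon --- so you have in fact settled $m\le 1$, although you do not remark on it.

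For $m\ge 2$ central points, however, there is a genuine gap: the step ``grow the empty triangle $v_1v_2p$ into an empty convex pentagon by repeatedly attaching nearest angular neighbors'' is the assertion the lemma is about, not a method. An empty triangle, or even an empty convex quadrilateral, need not extend to an empty convex pentagon --- that is exactly why $H(4)=5$ while $H(5)=10$, and in a $9$-point Harborth configuration every such greedy growth stalls --- so any completion must exploit the sixth hexagon vertex concretely, and your sketch never says how: the averaging pentagon is convex but may contain other interior points, and no link is made between it and the growth procedure. You concede this (``I expect the delicate work\dots''), but an acknowledged hole is still a hole. The standard finish, which is also the engine of the paper's own proof of the related Lemma~\ref{lm:lm2}, runs through the hull of the interior points rather than a greedy growth: take an edge $uv$ of $CH(I)$, where $I$ is the set of interior points, so that the open halfplane $h$ bounded by the line $uv$ on the far side contains no point of $I$; since $u,v\in C$ lie inside both triangles $v_1v_3v_5$ and $v_2v_4v_6$, each side of the line contains at least two hexagon vertices; if $h$ contains three (necessarily consecutive) vertices, these together with $u$ and $v$ form a convex pentagon whose interior lies in $h$ inside the hexagon and is therefore empty of $I$, of hexagon vertices (extremality), and of all other points of the set. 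The remaining subcase, where $h$ contains exactly two hexagon vertices, still requires a separate argument --- precisely the kind of case analysis your outline defers --- so as written the proposal does not yet constitute a proof.
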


From the Erd\H os Szekeres theorem, we know that every sufficiently large set
of points in the plane in general position, contains a convex hexagon. Lemma
\ref{lm:lm1} therefore ensures that every sufficiently large set of points in
the plane contains a 5-hole. Harborth \cite{harborth} showed that a minimum of
10 points are required to ensure the existence of a 5-hole, that is $H(5)=10$.
This means, the existence of a 5-hole is not guaranteed if we have less than 10
points in the plane \cite{harborth}.

In the following, we prove two lemmas where we show, if the convex hull of the
point set is not a triangle, a 5-hole can be obtained in less than 10 points.

\begin{lemma}If $Z$ is a set of points in the plane in general position, with $|\bbV(CH(Z))|=5$ and $|\bbI(CH(Z))|\geq 2$, then
$Z$ contains a 5-hole. \label{lm:lm2}
\end{lemma}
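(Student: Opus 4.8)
The plan is to label the convex hull $CH(Z)=p_1p_2p_3p_4p_5$ in counter-clockwise order, write $I=\bbI(CH(Z))$ for the set of interior points ($|I|\ge 2$ by hypothesis), and to build the 5-hole out of an edge of the inner convex hull $CH(I)$ together with consecutive vertices of the pentagon. The central device is what I will call the \emph{cap argument}: if $ab$ is an edge of $CH(I)$, then every point of $I$ lies in one closed halfplane $\overline{\mathcal H}_c(ab,\cdot)$ of the line $ab$, so the open halfplane $\overline{\bbH}(ab,c)$ on the far side (where $c$ is any other interior point) meets $CH(Z)$ in a region that is empty in $Z$. Hence, if this far side contains three (necessarily consecutive) pentagon vertices $p_i,p_{i+1},p_{i+2}$, then $\{a,b,p_i,p_{i+1},p_{i+2}\}$ is in convex position — the segment $ab$ is an edge of their convex hull since all three $p$'s lie strictly on one side, and each $p_j$ is extreme in $Z$ — and the resulting convex pentagon is empty. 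This yields the desired 5-hole.

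First I would dispose of the easy regimes. If $|\bbV(CH(I))|\ge 6$ then $Z$ contains a convex hexagon and Lemma \ref{lm:lm1} immediately produces a 5-hole. If $|I|=2$, say $I=\{a,b\}$, the line $ab$ splits the five hull vertices into two consecutive arcs of sizes summing to $5$, so one arc has at least three vertices and the cap argument finishes. If $CH(I)$ is a pentagon with no point of $Z$ in its interior, that inner pentagon is itself a 5-hole.

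The substantive work is the case $|\bbV(CH(I))|\in\{3,4,5\}$ in which \emph{every} edge of $CH(I)$ exposes at most two pentagon vertices on its outer side. Here such an edge $ab$ only yields an empty convex quadrilateral $a\,b\,p_i\,p_{i+1}$, and the task is to supply a fifth vertex without destroying convexity or emptiness. I would extend such a quadrilateral using the inner-hull vertex adjacent to $a$ or $b$, or, when that fails, the nearest angular neighbour of the appropriate directed edge inside the empty cap, choosing between the two neighbouring caps according to which keeps the five points in convex position. For the inner-pentagon subcase with interior points present, I would recurse: any 5-hole found among $\bbV(CH(I))\cup\bbI(CH(I))$ lies inside $CH(I)$ and therefore contains no vertex of the outer pentagon, so it is automatically a 5-hole of $Z$, reducing to a strictly smaller instance of the same lemma.

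The main obstacle is precisely this residual analysis. When the interior points nest tightly inside the pentagon — an inner triangle or quadrilateral whose edges each face only one or two vertices of $CH(Z)$ — no single edge of $CH(I)$ sees three outer vertices, and one must argue by hand that some empty quadrilateral can always be completed to an empty convex pentagon by an adjacent inner point or an angular neighbour. Verifying that the chosen fifth point simultaneously preserves convexity and leaves the pentagon empty, uniformly over these configurations, is the delicate step; I expect it to require a short case distinction on $|\bbV(CH(I))|$ and on how the caps distribute the five pentagon vertices.
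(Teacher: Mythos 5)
Your cap argument and the easy regimes are fine, but the proposal stops exactly where the proof content of this lemma lives: the case in which every edge of $CH(I)$ exposes at most two pentagon vertices is deferred to an unexecuted ``short case distinction,'' and the completion strategy you sketch for it is not established and does not obviously work. Completing an empty quadrilateral $abp_ip_{i+1}$ by the inner-hull vertex adjacent to $a$ or $b$, or by a nearest angular neighbour, must simultaneously preserve convexity and emptiness, and there is no argument that one of the two neighbouring caps always succeeds. The paper's treatment shows why a purely local, edge-by-edge view is insufficient: for $|\bbI(CH(Z))|=3$ it does not complete any particular quadrilateral at all, but partitions the exterior of the inner triangle into six regions $R_1,\dots,R_6$, writes down the inequalities that must hold if \emph{no} construction works ($|R_1|,|R_3|,|R_5|\le 1$ and $|R_6|+|R_1|+|R_2|\le 2$, etc.), and sums them to contradict $|\bbV(CH(Z))|=5$ --- a global pigeonhole step that guarantees some well-populated region exists without naming it in advance. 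Your plan has no substitute for this.

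There is also a concrete hole in your recursion. You recurse only when $CH(I)$ is a pentagon, reducing to $\bbV(CH(I))\cup\bbI(CH(I))$; but when the inner pentagon contains \emph{exactly one} point of $Z$, the hypothesis $|\bbI|\ge 2$ fails for the smaller instance, and indeed those six inner points alone need not contain any 5-hole (the interior point can lie in the central region cut out by the diagonals, so every 5-subset is either the non-empty pentagon or a non-convex set). At that stage the outer vertices are indispensable, which throws you back into the unresolved cap case. The paper's recursion is structured differently precisely to avoid this: for $|\bbI(CH(Z))|\ge 4$ it takes an inner-hull edge $xy$ cutting off exactly two hull vertices, shows via the triangular regions $R_x$, $R_y$ that either a convex hexagon (hence a 5-hole, by Lemma \ref{lm:lm1}) appears, or one may replace the two cut-off hull vertices by $x$ and $y$, obtaining a new convex pentagon whose interior holds exactly two fewer points; iterating lands in the fully analysed base cases $|\bbI|\in\{2,3\}$. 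Until you either carry out your residual case analysis in full or adopt a descent of this kind, the lemma is not proved.
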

\begin{proof}To begin with suppose there are only two points {$y_1$} and {$y_2$}
in $\bbI(CH(Z))$. The extended straight line {$y_1y_2$} divides the plane into
two halfplanes, one of which must contain at least three points of
{$\bbV(CH(Z))$}. These three points along with the points {$y_1$} and {$y_2$}
forms a 5-hole (Figure \ref{fig:fig12}(a)).

Next suppose, there are three points {$y_1$}, {$y_2$}, and {$y_3$} in
$\bbI(CH(Z))$. Consider the partition of the exterior of $ y_1y_2y_3$ into
disjoint regions $R_i$ as shown in Figure \ref{fig:fig12}(b). Let {$| R_{i} |$}
denote the number of points of {$\bbV(CH(Z))$} in region {$R_{i}$}. If $Z$ does
not contain a 5-hole, we must have:

\begin{figure*}[h]
\centering
\begin{minipage}[c]{0.33\textwidth}
\centering
\includegraphics[width=1.75in]
    {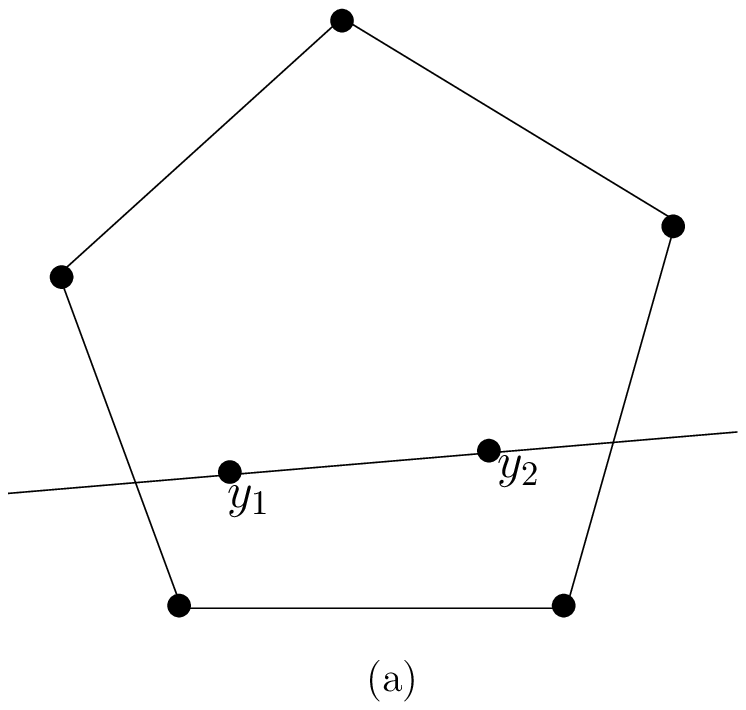}\\
\end{minipage}%
\begin{minipage}[c]{0.33\textwidth}
\centering
\includegraphics[width=1.75in]
    {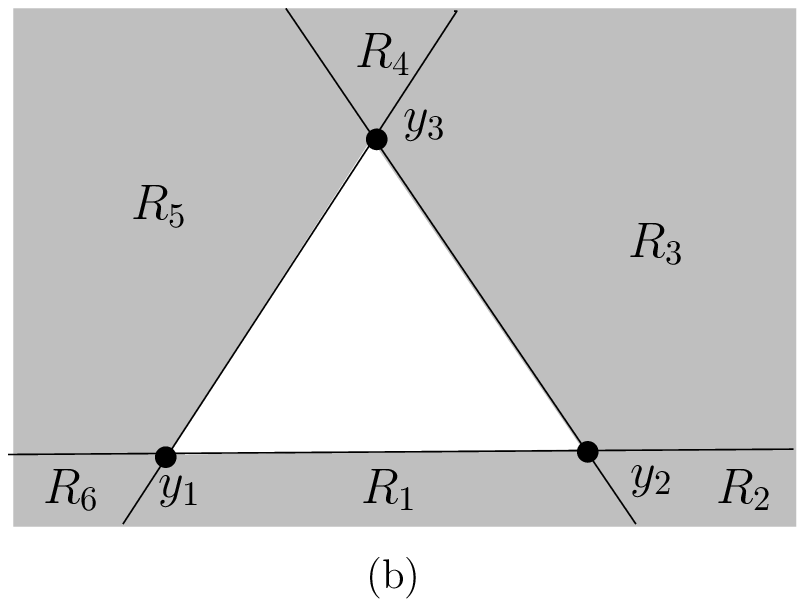}\\
\end{minipage}
\begin{minipage}[c]{0.33\textwidth}
\centering
\includegraphics[width=1.75in]
    {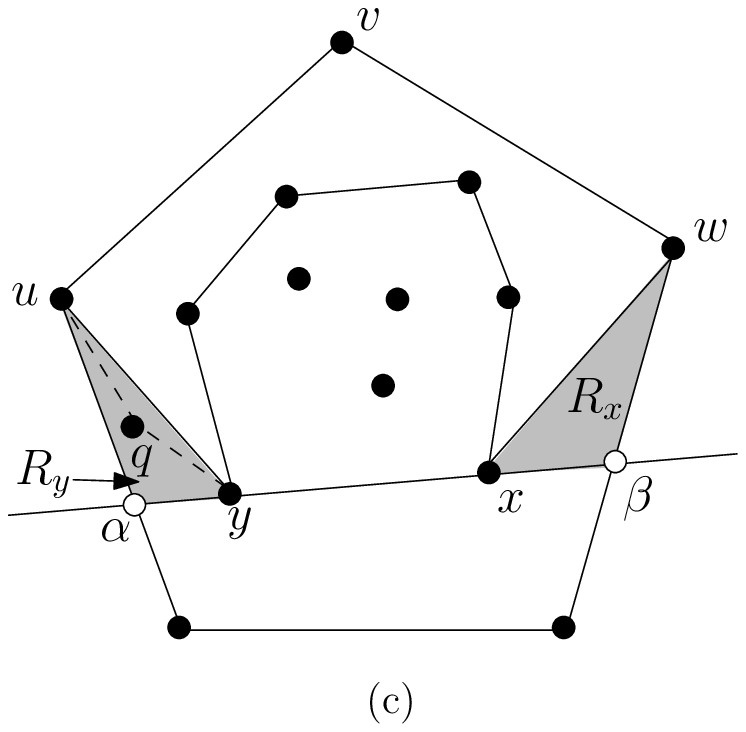}\\
\end{minipage}
\caption{Illustrations for the proof of Lemma \ref{lm:lm2}.}
\label{fig:fig12}\vspace{-0.15in}
\end{figure*}

\begin{equation}
| R_{1} | \leq 1, \hspace{1cm} | R_{3} | \leq 1, \hspace{1cm} | R_{5} | \\\leq
1, \label{eq:EN2}
\end{equation}
\vspace{-0.25in}
\begin{eqnarray}
| R_{6} | + | R_{1} | + | R_{2} | & \leq & 2, \nonumber \\
| R_{2} | + | R_{3} | + | R_{4} | & \leq & 2, \nonumber \\
| R_{4} | + | R_{5} | + | R_{6} | & \leq & 2. \label{eq:EN3}
\end{eqnarray}

Adding the inequalities of (\ref{eq:EN3}) and using the fact $|\bbV(CH(Z))|= 5$
we get {$ |R_{2}| + | R_{4}| + | R_{6}|  \leq 1$}. On adding this inequality
with those of (\ref{eq:EN2}) we finally get $\sum_{i=1}^6| R_{i} |\leq 4 <
5=|\bbV(CH(Z))|$, which is a contradiction.

Finally, suppose $|\bbI(CH(Z))|=k\geq 4$. Let $x, y\in Z$ be such that $xy$ is
an edge of $CH(\bbI (CH(Z)))$ and $z\in \bbI(CH(Z))$ be any other point.
If $|\bbV(CH(Z)) \cap \overline\mathcal H(xy, z)| \geq 3$, the points $x$ and $y$
together with the three points of $\bbV(CH(Z)) \cap \overline\mathcal H(xy, z)$
form a 5-hole. When $| \bbV(CH(Z)) \cap \overline\mathcal H(xy, z)| = 1$, the
4 points in  $\bbV(CH(Z)) \cap \mathcal H(xy, z)$ along with the points {\it x}
and {\it y} form a convex hexagon, which contains a 5-hole from Lemma
\ref{lm:lm1}. Otherwise, {$| \bbV(CH(Z)) \cap \overline\mathcal H(xy, z) | =
2$}. Denote by $\alpha, \beta$ the points where the extended straight line passing through the points $x$ and $y$ intersects the boundary of $CH(Z)$, as shown
in Figure \ref{fig:fig12}(c). Let $R_x=\bbI( wx\beta)$ and $R_y=\bbI( uy\alpha)$ be the two triangular
regions generated inside $CH(Z)$ in the halfplane $\mathcal H(xy, z)$.
If any one of $R_x$ or $R_y$ is non-empty in $Z$,
the nearest neighbor $q$ of the line $uy$ (or $wx$) in $R_y$ (or $R_x$) forms
the convex hexagon $uvwxyq$ (or $xyuvwq$), which contains an 5-hole from Lemma
\ref {lm:lm1}. Therefore, assume that both $R_x$ and $R_y$ are empty in $Z$.
Observe that the number of points of $Z$ inside $uvwxy$ is exactly two less
than the number of points of $Z$ inside $CH(Z)$. By applying this argument
repeatedly on the modified pentagon we finally get a 5-hole or a convex
pentagon with two or three interior points.\hfill $\Box$
\end{proof}

\begin{lemma}If $Z$ is a set of points in the plane in general position, with $|\bbV(CH(Z))|=4$ and $|\bbI(CH(Z))|\geq 5$,
then $Z$ contains a 5-hole. \label{lm:lm3}
\end{lemma}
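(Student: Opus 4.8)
The plan is to prove Lemma~\ref{lm:lm3}, where $CH(Z)$ is a quadrilateral with at least five interior points. My first step is to consider the convex layers of the interior point set $\bbI(CH(Z))$, that is, to look at $CH(\bbI(CH(Z)))$. The rough idea is that whenever a convex hull edge of the interior points sees enough of the outer quadrilateral's vertices in its outer halfplane, we can splice together interior vertices with boundary vertices to build either a $5$-hole directly or a convex hexagon, at which point Lemma~\ref{lm:lm1} finishes the job. This mirrors the ``peeling'' strategy of Lemma~\ref{lm:lm2}: take an edge $xy$ of $CH(\bbI(CH(Z)))$, let $z$ be an interior point on the opposite side, and distribute the four outer vertices $\bbV(CH(Z))$ according to which side of the line $xy$ they lie on and into the angular cones determined by $x$ and $y$.

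The key case analysis I would carry out is on the reduced pentagon / hexagon structure. If the interior hull itself is large (say $|\bbV(CH(\bbI(CH(Z))))|\geq 5$), then the interior five points already give a convex pentagon with points inside or a convex hexagon, and we reduce to earlier lemmas or to Lemma~\ref{lm:lm1}. The substantive work is when the interior hull is a triangle or a quadrilateral. When $CH(\bbI(CH(Z)))$ is a quadrilateral $p_1p_2p_3p_4$, the four outer vertices and four inner vertices interleave, and I would argue that some edge $p_ip_{i+1}$ of the inner quadrilateral together with its two endpoints and a suitable pair of outer vertices (found as nearest angular neighbors of the directed edges $\overrightarrow{p_ip_{i+1}}$) forms an empty convex pentagon; the nearest-angular-neighbor machinery from Section~\ref{dn} guarantees emptiness of the relevant cone. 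When the inner hull is a triangle $y_1y_2y_3$ with the remaining interior points (at least two) inside or interacting with it, I would combine the region-counting argument of Lemma~\ref{lm:lm2} with a reduction: pick an appropriate interior point and an outer edge, and either extract a hexagon or peel off two points to decrease the count, eventually landing in the already-settled configuration of Lemma~\ref{lm:lm2} (a pentagonal hull with two or three interior points, which contains a $5$-hole).

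The technical heart is ensuring emptiness of the pentagons and hexagons we construct, and here the nearest-neighbor and nearest-angular-neighbor definitions do the heavy lifting: by always choosing $q$ to be the nearest neighbor of a line within a prescribed triangular or conical region, any candidate hexagon $uvwxyq$ has an empty interior, so Lemma~\ref{lm:lm1} applies cleanly. I would also use the observation, already exploited in Lemma~\ref{lm:lm2}, that replacing $CH(Z)$ by the smaller pentagon $uvwxy$ removes exactly the two points $x,y$ from the interior count, so repeated application strictly decreases $|\bbI(CH(Z))|$ and the process terminates.

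The main obstacle I anticipate is the bookkeeping when the interior hull is a triangle together with several deeper interior points: there we cannot immediately apply Lemma~\ref{lm:lm2} (whose hypothesis is a pentagonal outer hull), so we must carefully peel or reconfigure to reach that lemma's setting without losing control of emptiness. Handling every sub-case of how the four outer vertices split across the cones based at the inner hull edges, while simultaneously guaranteeing that the extracted polygon is both convex and empty, is where the detailed figure-driven argument will be needed, and it is the part most prone to hidden edge cases.
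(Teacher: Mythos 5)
Your proposal diverges from the paper's proof and contains a genuine gap at its inductive heart. The peeling move you import from Lemma~\ref{lm:lm2} replaces the two hull vertices beyond an edge $xy$ of $CH(\bbI(CH(Z)))$ by $x$ and $y$; but starting from a quadrilateral hull this move always produces another \emph{quadrilateral} hull with exactly two fewer interior points --- never ``a pentagonal hull with two or three interior points.'' (The only branch that creates a pentagon is when exactly one outer vertex lies beyond $xy$, and that branch terminates immediately via Lemma~\ref{lm:lm2}; it does not recur.) So your descent bottoms out at a convex quadrilateral with three or four interior points, a 7- or 8-point configuration that no earlier lemma covers and for which general position alone cannot guarantee a 5-hole (recall $H(5)=10$; in particular 8-point sets without 5-holes exist). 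The paper avoids this trap with a different reduction: it deletes a hull vertex $p_i$ whose triangle $p_{i-1}p_ip_{i+1}$ contains an interior point, which either enlarges the hull (finishing by Lemmas~\ref{lm:lm1} and~\ref{lm:lm2}) or keeps a quadrilateral hull while decreasing the interior count by exactly \emph{one}, so that the descent can be halted precisely at the base case $|\bbI(CH(Z))|=5$.

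That base case is the substantive content of the lemma, and your sketch does not engage with it. When $|\bbI(CH(Z))|=5$ the paper's argument leans essentially on the \emph{third} layer: for $|L\{2,Z\}|=4$ it uses the third-layer point inside a triangle of the second layer to empty two exterior regions and force two outer vertices into a single region, yielding a pentagon with exactly two interior points, handled by Lemma~\ref{lm:lm2}; for $|L\{2,Z\}|=3$ it partitions the exterior using the line through the two third-layer points to reach $\sum_i |R_i| \leq 3 < 4$. Your proposed substitutes do not work as stated: the claimed ``interleaving'' of the four outer and four inner vertices is not guaranteed by anything, and an inner-hull edge ``together with its two endpoints and a suitable pair of outer vertices'' is only four points, not a pentagon. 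Likewise, a direct port of Lemma~\ref{lm:lm2}'s region counting to a triangular second layer fails here, since with only four outer vertices the analogous inequalities give $\sum_i |R_i| \leq 4$, which is not a contradiction --- one must exploit the deeper points, and that requires first pinning the interior count to exactly five, a reduction your peeling does not deliver. A smaller gap of the same kind: if the second layer is a pentagon enclosing exactly one deeper point, the six inner points alone need not contain a 5-hole (the deeper point can lie in every quadrilateral spanned by four of the pentagon's vertices), so your opening step ``interior hull of size $\geq 5$ reduces to earlier lemmas'' also fails in that subcase.
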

\begin{proof}Let $CH(Z)$ be the polygon $p_1p_2p_3p_4$.
If some outer halfplane induced by an edge of $CH(\bbI(CH(Z)))$ contains more
than two points of $\bbV(CH(Z))$, then $Z$ contains a 5-hole. Therefore, we
assume

\begin{assumption}
Every outer halfplane induced by the edges of $CH(\bbI(CH(Z)))$ contains at
most two points of $\bbV(CH(Z))$. \label{assumption1}
\end{assumption}

To begin with suppose $|\bbI (CH(Z))|=5 $. If $|\bbV(CH(\bbI (CH(Z)))) | =5$, we are
done. Thus, the convex hull of the second layer of $Z$ is either a quadrilateral or
a triangle. Let $CH(\bbI(CH(Z)))$ be the polygon $z_1z_2\ldots z_k$, where $k$ is either 3 or 4.
This means $3\leq |L\{2, Z\}| \leq 4$, and we have the following two cases:

\begin{description}
\item[{\it Case} 1:] $|L\{2, Z\}|=4$. Let $x\in L\{3, Z\}$ and w. l. o. g. assume
$x\in\bbI( z_1z_3z_4)\cap Z$. Consider the partition of the exterior of the
quadrilateral $z_1z_2z_3z_4$ into disjoint regions $R_i$ as shown in Figure
\ref{fig:fig34}(a). Let $|R_i|$ denote the number of points of $\bbV(CH(Z))$ in
the region $R_i$. If there exists a point $p_i\in R_3\cap Z$, then
$p_iz_2z_1z_3x$ forms a 5-hole. Therefore, assume that $|R_3|=0$, and
similarly, $|R_5|=0$. Moreover, if $|R_1|+|R_2|\geq 2$, $((R_1\cup R_2)\cap
\bbV(CH(Z)))\cup\{z_1, z_4, x\}$ contains a 5-hole. This implies,
$|R_1|+|R_2|\leq 1$ and similarly $|R_6|+|R_7|\leq 1$. Therefore, $|R_4|\geq 2$
and Assumption \ref{assumption1} implies that $|R_4|=2$. This implies that the set of points in $(R_4\cap Z)\cup\{z_1, z_3, z_4\}$ forms a convex pentagon
with exactly two interior points, which then contains a 5-hole from Lemma
\ref{lm:lm2}.

\begin{figure*}[h]
\centering
\begin{minipage}[c]{0.33\textwidth}
\centering
\includegraphics[width=2.0in]
    {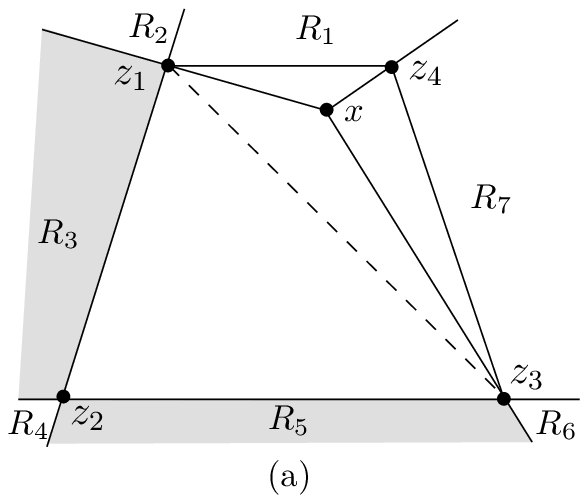}\\
\end{minipage}%
\begin{minipage}[c]{0.33\textwidth}
\centering
\includegraphics[width=2.0in]
    {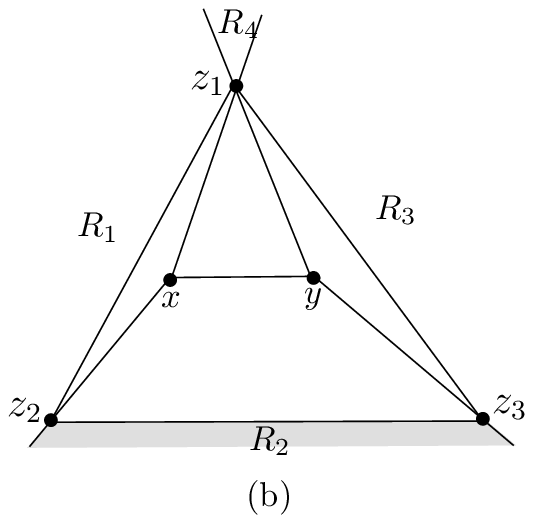}\\
\end{minipage}
\begin{minipage}[c]{0.33\textwidth}
\centering
\includegraphics[width=2.0in]
    {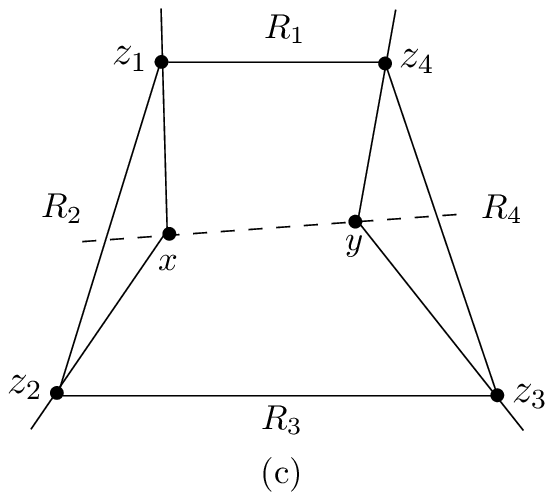}\\
\end{minipage}
\caption{Illustrations for the proof of Lemma \ref{lm:lm3}: (a) $|L\{2, Z\}|=4$, (b) $|L\{2, Z\}|=3$, (c) Illustration for the proof of Theorem \ref{th:layer}.}
  \label{fig:fig34}\vspace{-0.15in}
\end{figure*}

\item[{\it Case} 2:] $|L\{2, Z\}|=3$. Let $L\{3, Z\}=\{x, y\}$.
Consider the partition of the exterior of $CH(\bbI(CH(Z)))$ as shown in Figure
\ref{fig:fig34}(b). Observe that $Z$ contains a 5-hole unless $|R_2|=0$,
$|R_1|\leq 1$, and $|R_3|+|R_4|\leq 1$. This implies that $\sum_{i=1}^4
|R_i|\leq 3<4=|\bbV(CH(Z))|$, which is a contradiction.
\end{description}

Now, consider $|\bbI (CH(Z))|>5 $. W.l.o.g. assume that $\bbI(
p_1p_2p_3)\cap Z$ is non-empty. If $| CH(Z \backslash \{p_2\}) | \geq 5$, a
5-hole in $Z\backslash \{p_2\}$ is ensured from Lemma \ref{lm:lm1} and  Lemma
\ref{lm:lm2}. Otherwise, $CH(Z\backslash \{p_2\})$ is a quadrilateral with
exactly one less point of $Z$ in its interior than $CH(Z)$. By repeating this
process we finally get a convex quadrilateral with exactly 5 points in its
interior, thus reducing the problem to {\it Case} 1 and {\it Case} 2. \hfill
$\Box$
\end{proof}

From the argument at the end of the proof of the previous lemma, it follows
that if $|\bbI(CH(Z))|\geq 6$, then either $p_1$ or $p_3$ is 5-redundant in
$Z$. Similarly, either $p_2$ or $p_4$ is 5-redundant in $Z$. Therefore, we have
the following corollary:

\begin{corollary}
Let $Z$ be a set of points in the plane in general position, such that
$CH(Z)$ is the polygon $z_1z_2z_3z_4$, and $|\bbI(CH(Z))|\geq 6$. Then the following
statements hold:
\begin{description}
\item[{\it (i)}] If for some $i\in\{1, 2, 3, 4\}$, $\bbI(z_{i-1}z_iz_{i+1})\cap Z$ is non-empty, then $z_i$ is 5-redundant in $Z$, where the indices are taken modulo 4.
\item[{\it (ii)}]At least one of the vertices corresponding to any diagonal of $CH(Z)$ is
5-redundant in $Z$. \hfill$\Box$
\end{description}
\label{cor:5redun}
\end{corollary}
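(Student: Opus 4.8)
The plan is to read Corollary~\ref{cor:5redun} directly off the final paragraph of the proof of Lemma~\ref{lm:lm3}, which already does essentially all the work; the corollary is a clean repackaging of that argument under the stronger hypothesis $|\bbI(CH(Z))|\geq 6$. Recall the reduction step from that proof: whenever a triangle $\bbI(p_ip_{i+1}p_{i+2})$ contains a point of $Z$, one removes the apex vertex $p_{i+1}$, and $CH(Z\backslash\{p_{i+1}\})$ is again a quadrilateral containing exactly one fewer interior point than $CH(Z)$. First I would establish part~(i). Fix $i$ and suppose $\bbI(z_{i-1}z_iz_{i+1})\cap Z$ is non-empty. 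Setting $p_j=z_j$, this is precisely the hypothesis that triggers the reduction on vertex $z_i$: deleting $z_i$ yields a quadrilateral on $Z\backslash\{z_i\}$ whose interior contains $|\bbI(CH(Z))|-1\geq 5$ points, and by Lemmas~\ref{lm:lm1}, \ref{lm:lm2}, and the Case~1/Case~2 analysis this set contains a $5$-hole. Hence $z_i$ is $5$-redundant in $Z$, which is exactly part~(i).

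For part~(ii), the key observation is that for any diagonal of the quadrilateral $z_1z_2z_3z_4$, say the diagonal $z_1z_3$, the two triangles $z_4z_1z_2$ and $z_2z_3z_4$ on either side of it (equivalently, the triangles sharing the opposite diagonal's endpoints) partition the interior of $CH(Z)$ together with the diagonal. The sharper route is to use the diagonal itself: the line through $z_1z_3$ splits $\bbI(CH(Z))$ into the two triangular halves $\bbI(z_1z_2z_3)$ and $\bbI(z_1z_3z_4)$. Since $|\bbI(CH(Z))|\geq 6$, at least one of these two open triangles is non-empty in $Z$. If $\bbI(z_1z_2z_3)\cap Z\neq\emptyset$, then applying part~(i) with $i=2$ shows $z_2$ is $5$-redundant; if $\bbI(z_1z_3z_4)\cap Z\neq\emptyset$, then part~(i) with $i=4$ shows $z_4$ is $5$-redundant. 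Thus at least one endpoint of the diagonal opposite to $z_1z_3$ is $5$-redundant. Iterating over both diagonals, or simply reindexing, gives the statement for an arbitrary diagonal: for the diagonal $z_2z_4$ the same split yields that one of $z_1,z_3$ is $5$-redundant, and symmetrically for $z_1z_3$ one of $z_2,z_4$ is $5$-redundant.

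The one subtlety I expect to need care is the bookkeeping that tightens the diagonal statement into ``at least one of the two vertices \emph{on} the diagonal is $5$-redundant,'' since the natural argument above produces redundancy of a vertex of the \emph{opposite} diagonal. The remark preceding the corollary resolves this precisely: it asserts that for $|\bbI(CH(Z))|\geq 6$ either $p_1$ or $p_3$ is $5$-redundant, and also either $p_2$ or $p_4$ is. Combined, these two disjunctions cover both diagonals simultaneously, so for \emph{any} diagonal of $CH(Z)$ one of its two endpoints is $5$-redundant. The main obstacle is therefore not mathematical depth but correct index alignment between the generic labeling $p_1p_2p_3p_4$ used in Lemma~\ref{lm:lm3} and the labeling $z_1z_2z_3z_4$ in the corollary; once the reduction step is invoked through part~(i), both claims follow immediately and no further geometric case analysis is required.
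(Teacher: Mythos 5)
Your proposal is correct and follows essentially the same route as the paper, which derives the corollary directly from the deletion--reduction step at the end of the proof of Lemma~\ref{lm:lm3} (delete $z_i$ when $\bbI(z_{i-1}z_iz_{i+1})\cap Z\neq\emptyset$, then invoke Lemmas~\ref{lm:lm1}, \ref{lm:lm2}, or the quadrilateral case with $\geq 5$ interior points), with part~(ii) obtained by splitting the interior along the opposite diagonal. You also correctly identify and resolve the one real subtlety --- that splitting along a diagonal certifies redundancy of an endpoint of the \emph{other} diagonal, so the two disjunctions together cover both diagonals --- which is exactly the bookkeeping implicit in the paper's remark preceding the corollary.
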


Moreover, by combining Lemmas \ref{lm:lm1}, \ref{lm:lm2}, and \ref{lm:lm3},
the following result about the existence of 5-holes is immediate.

\begin{corollary}Any set $Z$ of 9 points in the plane in general position, with $|\bbV(CH(Z))|\geq 4$, contains a 5-hole. \hfill $\Box$
\label{corollary:nine_points}
\end{corollary}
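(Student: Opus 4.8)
The plan is to prove Corollary~\ref{corollary:nine_points} by a straightforward case analysis on the size of the convex hull of $Z$, invoking the three results established immediately above. Since $|Z|=9$ and $|\bbV(CH(Z))|\geq 4$, we have $|\bbV(CH(Z))|\in\{4, 5, 6, 7, 8, 9\}$, and correspondingly $|\bbI(CH(Z))| = 9 - |\bbV(CH(Z))| \in \{0, 1, 2, 3, 4, 5\}$. The idea is that the larger the convex hull, the easier the conclusion, and the genuinely delicate small cases ($|\bbV(CH(Z))|=4$ or $5$) have already been handled by the preceding lemmas.

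First I would dispose of the large-hull cases. If $|\bbV(CH(Z))|\geq 6$, then $Z$ contains a convex hexagon, so by Lemma~\ref{lm:lm1} it contains a 5-hole. This covers $|\bbV(CH(Z))|\in\{6, 7, 8, 9\}$ at once. Next, if $|\bbV(CH(Z))|=5$, then $|\bbI(CH(Z))| = 9 - 5 = 4 \geq 2$, so Lemma~\ref{lm:lm2} applies directly and yields a 5-hole. Finally, if $|\bbV(CH(Z))|=4$, then $|\bbI(CH(Z))| = 9 - 4 = 5 \geq 5$, so Lemma~\ref{lm:lm3} applies and again produces a 5-hole. Together these three observations exhaust all possibilities, completing the proof.

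There is essentially no obstacle here; the corollary is a packaging result whose entire content is that the thresholds in Lemmas~\ref{lm:lm1}, \ref{lm:lm2}, and \ref{lm:lm3} have been chosen so that, for a 9-point set with a non-triangular hull, the interior is always large enough to trigger one of the three lemmas. The only thing to verify carefully is that the arithmetic $9 - |\bbV(CH(Z))|$ meets the interior-size hypotheses in each branch, namely $\geq 2$ for the pentagon case and $\geq 5$ for the quadrilateral case, which it does with equality to spare in the latter. Because the statement is marked with \hfill$\Box$ in the excerpt, the authors likely regard it as immediate from the combination of the three lemmas, and my proof would simply spell out the three-way split explicitly.
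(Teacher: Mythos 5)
Your proof is correct and is exactly the argument the paper intends: the corollary is stated as immediate from combining Lemmas \ref{lm:lm1}, \ref{lm:lm2}, and \ref{lm:lm3}, and your three-way split on $|\bbV(CH(Z))|$ (hexagon case via Lemma \ref{lm:lm1}, pentagon with $4\geq 2$ interior points via Lemma \ref{lm:lm2}, quadrilateral with exactly $5$ interior points via Lemma \ref{lm:lm3}) spells out precisely that combination. No gaps; the arithmetic checks in each branch are verified correctly.
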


Two sets of points, $S_1$ and $S_2$, in general position, having the same
number of points belong to the same {\it layer equivalence class} if the number
of layers in both the point sets is the same and $|L\{k, S_1\}|=|L\{k, S_2\}|$,
for all $k$. A set $S$ of points with 3 different layers  belongs to the layer
equivalence class $L\{a,b,c\}$ whenever $|L\{1, S\}|=a$, $|L\{2, S\}|=b$, and
$|L\{3, S\}|=c$, where $a, b, c$ are positive integers.

It is known that there exist sets with 9 points without any 5-hole, belonging to the layer equivalence classes $L\{3, 3, 3\}$ \cite{krasser} and $L\{3, 5, 1\}$
\cite{harborth}. In the following theorem we show that any 9-point
set not belonging to either of these two equivalent classes contains a 5-hole.

\begin{theorem}
Any set of 9 points in the plane in general position, not containing a 5-hole
either belongs to the layer equivalence class $L\{3, 3, 3\}$ or to the layer
equivalence class $L\{3, 5, 1\}$. \label{th:layer}
\end{theorem}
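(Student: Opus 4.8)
The plan is to peel $S$ layer by layer and show that the only obstructions to a $5$-hole force one of the two stated layer vectors. Since $S$ contains no $5$-hole, Corollary~\ref{corollary:nine_points} forces $|\bbV(CH(S))|\le 3$, and general position gives $|\bbV(CH(S))|=3$; thus $L\{1,S\}$ is a triangle and the remaining six points lie in $\bbI(CH(S))$. The second layer $L\{2,S\}=\bbV(CH(\bbI(CH(S))))$ has between $3$ and $6$ vertices. If $|L\{2,S\}|=6$ the six interior points form a convex hexagon, so Lemma~\ref{lm:lm1} yields a $5$-hole, a contradiction. If $|L\{2,S\}|=5$ the five points of the second layer leave exactly one point inside, giving the vector $L\{3,5,1\}$; if $|L\{2,S\}|=3$ the three remaining points form a third triangular layer, giving $L\{3,3,3\}$. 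Both are the admissible classes. Hence the whole theorem reduces to excluding the case $|L\{2,S\}|=4$, i.e. to proving that a configuration of type $L\{3,4,2\}$ always contains a $5$-hole.

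So fix a triangle $p_1p_2p_3=L\{1,S\}$, a convex quadrilateral $z_1z_2z_3z_4=L\{2,S\}$ strictly inside it, and the two third-layer points $x,y$ inside the quadrilateral. The main idea is to produce a convex pentagon among the nine points whose interior contains exactly $x$ and $y$, so that Lemma~\ref{lm:lm2} finishes the job; failing that, to produce such a pentagon directly from $x,y$ and three of the $z_i$. First I would record the following clean sub-step: if some triangle vertex $p_j$ lies in the outer halfplane of exactly one edge $z_iz_{i+1}$ of the quadrilateral, then $\{p_j,z_1,z_2,z_3,z_4\}$ is a convex pentagon obtained by inserting $p_j$ along that edge; it contains the quadrilateral, hence $x$ and $y$, in its interior, and nothing else, since the pentagon is contained in $CH(S)$ and so the other two triangle vertices lie outside it. Lemma~\ref{lm:lm2} then gives a $5$-hole. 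I may therefore assume that every triangle vertex lies in a \emph{vertex wedge}, i.e. in the outer halfplanes of two adjacent edges simultaneously.

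Next I would split on the line through $x$ and $y$. This line meets two edges of the quadrilateral and separates $\{z_1,z_2,z_3,z_4\}$ into two groups. If the split is $3$--$1$, the three vertices on one side together with $x$ and $y$ form a convex pentagon: the middle of the three is a convex vertex of the quadrilateral, hence lies on the far side of the chord joining the outer two, so the five points are in convex position; and the pentagon is empty, because the fourth $z_i$ lies on the other side of the line $xy$ and every $p_j$ lies outside the quadrilateral. This is exactly the mechanism of the first case of Lemma~\ref{lm:lm2}. The remaining, and genuinely delicate, situation is a $2$--$2$ split together with all three triangle vertices sitting in vertex wedges; this is the configuration depicted in Figure~\ref{fig:fig34}(c) and is where I expect the real work to lie. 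Here I would use one of the two edges \emph{not} crossed by the line $xy$, say $z_1z_2$, together with a triangle vertex $p_j$ lying beyond it; such a vertex exists because every vertex wedge is contained in the outer halfplane of one of the two uncrossed edges, so some $p_j$ lies beyond $z_1z_2$ or beyond $z_3z_4$. Building the pentagon $p_jz_1xyz_2$ then gives an empty convex pentagon, and Lemma~\ref{lm:lm2} applies. The one point that needs care is the convexity claim, namely that \emph{both} $x$ and $y$ become vertices of $CH(\{p_j,z_1,z_2,x,y\})$; if one of them is swallowed by the quadrilateral spanned by the other four, I would swap to the other uncrossed edge $z_3z_4$, or interchange the roles of $x$ and $y$, and argue that at least one of these choices produces an empty convex pentagon. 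An alternative route worth keeping in reserve for this last case is to absorb quadrilateral vertices into several triangle vertices so as to reach a convex quadrilateral carrying five interior points and then invoke Lemma~\ref{lm:lm3}. The crux of the whole proof is thus this $2$--$2$/vertex-wedge configuration; the rest is routine layer bookkeeping together with repeated appeals to Lemmas~\ref{lm:lm1}--\ref{lm:lm3}.
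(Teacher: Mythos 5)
Your layer bookkeeping, the reduction to $L\{3,4,2\}$ via Corollary~\ref{corollary:nine_points} and Lemma~\ref{lm:lm1}, and your treatment of the $3$--$1$ split all coincide with the paper's proof. The genuine gap is in the $2$--$2$ case, which you correctly identify as the crux but do not actually resolve. Your plan there rests on the five-point sets $\{p_j,z_1,z_2,x,y\}$ (over an uncrossed edge) being in convex position after suitable swaps, but a vertex wedge over an uncrossed edge can straddle the extended line $xy$, so $p_j$ may lie on the far side of that line, and then the construction collapses; moreover ``interchanging the roles of $x$ and $y$'' yields the \emph{same} five-point set, so it is not a real fallback. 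Concretely, take $z_1=(-1,-1)$, $z_2=(1,-1)$, $z_3=(1,1)$, $z_4=(-1,1)$, $x=(-0.6,0.3)$, $y=(0.6,-0.3)$ (the line $xy$ is $y=-x/2$, crossing edges $z_4z_1$ and $z_2z_3$, a $2$--$2$ split), and $p_1=(-1.2,-1.1)$, $p_2=(26,-1.05)$, $p_3=(-5,1.5)$. The triangle $p_1p_2p_3$ contains the quadrilateral, and each $p_i$ lies in a vertex wedge (at $z_1$, $z_2$, $z_4$ respectively), so your ``exactly one edge'' sub-step never fires. Yet every candidate pentagon of yours degenerates: in $\{p_1,z_1,z_2,x,y\}$ the point $z_1$ lies strictly inside the hull of the other four; in $\{p_2,z_1,z_2,x,y\}$ both $y$ and $z_2$ lie inside the triangle $z_1p_2x$ (note $p_2$ is above the line $xy$); and in $\{p_3,z_3,z_4,x,y\}$ both $x$ and $z_4$ lie inside the triangle $p_3yz_3$ (here $p_3$ is below the line $xy$). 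Your reserve route via Lemma~\ref{lm:lm3} also stalls on this configuration: deleting any one triangle vertex leaves either a quadrilateral hull with only four interior points or a triangular hull, so the lemma never applies.

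What this configuration shows is that the $5$-hole in the hard case can have a composition your scheme never produces: here it is $p_1z_1xz_4p_3$, i.e., \emph{two} hull vertices, the \emph{crossed} edge $z_4z_1$, and \emph{one} of the third-layer points. This is exactly the shape the paper's argument is built to find: it partitions the exterior of $z_1z_2z_3z_4$ into four regions $R_1,\dots,R_4$ adapted to the line $xy$ (Figure~\ref{fig:fig34}(c)); a hull vertex in $R_1$ or $R_3$ yields an immediate $5$-hole, and otherwise the three hull vertices are pigeonholed so that two of them fall into $R_2$ or $R_4$, and those two together with $\{x,z_1,z_2\}$ (resp.\ $\{y,z_3,z_4\}$) give the $5$-hole. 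In other words, the paper uses two exterior points and one interior point per pentagon, never one exterior point with both $x$ and $y$. Repairing your proof would require adding precisely this region-plus-pigeonhole analysis, which is essentially the paper's argument; as written, your $2$--$2$ case is a gap, not a proof.
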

\begin{proof}Let $S$ be a set of 9 points in general position. If $|\bbV(CH(S))|\geq 4$, a 5-hole is
guaranteed from Corollary \ref{corollary:nine_points}. Thus, for proving the
result is suffices to show that $S$ contains a 5-hole if $S\in L\{3, 4, 2\}$.

Assume $S \in L\{3, 4, 2\}$ and suppose $z_1, z_2, z_3, z_4$ are the vertices
of the second layer. Let $L\{3,
S\}=\{x, y\}$. The extended straight line ${xy}$ divides the entire plane into
two halfplanes. If one these halfplane contains three points of $L\{2, S\}$,
these three points along with the points $x$ and $y$ form a 5-hole.

Otherwise, both halfplanes induced by the extended straight line $xy$
contain exactly two points of $L\{2, S\}$. The exterior of the quadrilateral
$z_1z_2z_3z_4$ can now be partitioned into 4 disjoint regions $R_1$, $R_2$,
$R_3$, and $R_4$, as shown in Figure \ref{fig:fig34}(c). Let $|R_i|$ denote the
number of points of $\bbV(CH(S))$ in the region $R_i$. If $R_1$ or $R_3$
contains any point of $\bbV(CH(S))$, a 5-hole is immediate. Therefore,
$|R_1|=|R_3|=0$, which implies that $|R_2|+|R_4|=|\bbV(CH(S))|=3$. By the
pigeonhole principle, either $|R_2|\geq 2$ or $|R_4|\geq 2$. If $|R_2| \geq 2$,
$(R_2\cap S)\cup\{x, z_1, z_2\}$ contains a 5-hole. Otherwise, $|R_4|\geq 2$,
and $(R_4\cap S)\cup\{y, z_3, z_4\}$ contains a 5-hole.

Thus, a set $S$ of 9 points not containing a 5-hole, must either belong to
$L\{3, 3, 3\}$ or $L\{3, 5, 1\}$. \hfill $\Box$
\end{proof}

\section{Disjoint 5-Holes: Lower Bounds}
\label{mainresults}

In this section we present our main results concerning the existence of
disjoint 5-holes in planar point sets, which leads to a non-trivial lower bound
on the number of disjoint 5-holes in planar point sets. As $H(5)=10$, it is clear that every
set 20 points in the plane in general position, contains two disjoint 5-holes.
At first, we improve upon this result by showing that any set
of 19 points also contains two disjoint 5-holes.

\begin{theorem}Every set of 19 points in the plane in general position, contains two disjoint 5-holes.
\label{th:nineteen}
\end{theorem}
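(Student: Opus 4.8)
The plan is to produce the two holes on opposite sides of a separating line, so that disjointness comes for free. Concretely, I would look for a line $\ell$ partitioning $S$ into a set $B$ with $|B|=10$ and a set $A$ with $|A|=9$. Since any subset of $B$ has its convex hull entirely on one side of $\ell$, a $5$-hole of $B$ (which exists because $H(5)=10$) is automatically a genuine $5$-hole of the whole set $S$, and likewise for $A$; the two holes are disjoint because they lie in opposite open halfplanes of $\ell$. Thus the entire problem reduces to guaranteeing a $5$-hole inside the nine-point part $A$.

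By Corollary \ref{corollary:nine_points} the set $A$ contains a $5$-hole as soon as $|\bbV(CH(A))|\geq 4$, and by Theorem \ref{th:layer} the only nine-point sets with no $5$-hole are those in the layer classes $L\{3,3,3\}$ and $L\{3,5,1\}$, both of which have a triangular convex hull. Hence it suffices to exhibit a single separating line whose nine-point side has at least four convex-hull vertices. To locate such a line I would fix a vertex $p_0$ of $CH(S)$, sort the remaining $18$ points angularly around $p_0$ as $p_1,\dots,p_{18}$, and take the line through $p_0$ lying between the directions $\overrightarrow{p_0p_{10}}$ and $\overrightarrow{p_0p_{11}}$, perturbed so that $p_0$ joins the smaller group. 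This produces the candidate $A=\{p_0,p_{11},\dots,p_{18}\}$ and $B=\{p_1,\dots,p_{10}\}$; here $p_0$, $p_{11}$ and $p_{18}$ are always vertices of $CH(A)$, so $A$ fails to help only when $p_{12},\dots,p_{17}$ all lie inside the triangle $p_0p_{11}p_{18}$, i.e. when $A$ is forced into one of the two exceptional classes.

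The main obstacle is exactly the situation in which every balanced split leaves a triangular-hull nine-point set. I would attack it by exploiting the rigidity of the exceptional classes, which are extremely constrained configurations. Two complementary levers are available: (i) rotating the apex $p_0$ over the hull vertices of $S$ and over the choice of cut index, arguing that the resulting nine-point sides cannot all be simultaneously exceptional; and (ii) transferring a single point across $\ell$ — replacing one element of the triangular-hull side by a neighbouring point of $B$ — and using Lemmas \ref{lm:lm2} and \ref{lm:lm3} together with the $5$-redundancy statements of Corollary \ref{cor:5redun} to force a $5$-hole while keeping the remaining ten (now nine) points separated and still containing a hole. The nearest-angular-neighbour apparatus of Section \ref{dn} is the natural tool for choosing which point to reassign so that convexity and emptiness are preserved on both sides.

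I expect the separating-line reduction and the two automatic holes to be routine, so the bulk of the work will be the case analysis around $L\{3,3,3\}$ and $L\{3,5,1\}$: one must certify that no adversarial $19$-point configuration can block every balanced line-split at once, and carefully track how the ten-point side degrades to nine points when an element is reassigned while still retaining a $5$-hole. This bookkeeping, combined with the explicit structure of the two exceptional nine-point classes, is where I anticipate essentially all of the difficulty to lie.
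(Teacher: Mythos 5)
Your separating-line reduction is sound and is in fact the engine of the paper's proof: nearly every case there ends by exhibiting a line (a dividing diagonal $s_is_j$, or a line $s_iu_t$, $pq$, $u_7u_k$, etc.\ through one or two interior points) whose 9-point closed side has at least four hull vertices, so that Corollary \ref{corollary:nine_points} applies on one side and $H(5)=10$ or again the corollary on the other. The genuine gap is that everything you defer to ``the bulk of the work'' is the entire content of the theorem, and your two levers, as stated, do not suffice. First, you only consider splits by lines through a single hull vertex $p_0$ of $CH(S)$; when $|\bbV(CH(S))|=3$ the paper shows (Assumptions \ref{assumption3}--\ref{assumption4} and Lemmas \ref{lm:ch3empty}--\ref{lm:lmseveneight}) that all such apex splits can simultaneously fail, and the resolution requires lines through \emph{pairs of interior points} (e.g.\ $u_7u_k$, $u_7u_m$, $u_pu_z$) together with a structural analysis of the obstructions --- the three pairwise disjoint ``diamonds'' of Figure \ref{fig:thull}(c) --- none of which is reachable by rotating the apex or by re-indexing the cut. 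Second, your point-transfer lever is not a single move but a 5-redundancy claim: after moving a point $\beta$ across $\ell$, the 10-point side shrinks to 9 points and retains a 5-hole only if $\beta$ was 5-redundant there, which is exactly what Corollary \ref{cor:5redun} guarantees only under hypotheses (a point in a specific triangle, a nonempty cone) that must be verified configuration by configuration; the paper's Observation \ref{ob:10+t}, the $(a,b)$-splitter taxonomy for $4\le|\bbV(CH(S))|\le 7$, and the cone-emptiness case analysis of Section \ref{proofnineteen} are precisely that verification.

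A further small inaccuracy: it is not enough that the 9-point side avoid a triangular hull ``or else be exceptional'' --- Theorem \ref{th:layer} classifies 9-point sets with no 5-hole, but your candidate $A$ with triangular hull need not contain a 5-hole \emph{usable in your scheme} unless you also track which side of $\ell$ it lies on after a transfer, and the paper repeatedly needs holes avoiding specific vertices ($s_1$, $q$, $u_8$, \dots), which is a strictly stronger demand than mere existence (see, e.g., the hexagon-extension argument in Lemma \ref{lm:ch3empty} and {\it Case} 2.3 of Lemma \ref{lm:twomplusnine_ch5}'s analogue in Lemma \ref{lm:ch4_n1}). So: correct strategy, same as the paper's at the level of the reduction, but the proposal establishes only the easy half; the adversarial configurations you must ``certify cannot exist'' in fact nearly do exist, and ruling them out is the theorem.
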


Drawing parallel from the $2m+4$-point result for disjoint 4-holes due to
Hosono and Urabe \cite{urabecgta}, we prove a partitioning theorem for disjoint
5-holes for any set of $2m+9$ points in the plane in general position.

\begin{theorem}
For any set of $2m+9$ points in the plane in general position, it is possible
to divide the plane into three disjoint convex regions such that one contains a
set of 9 points which contains a 5-hole, and the others contain $m$ points
each, where $m$ is a positive integer. \label{th:twomplusnine}
\end{theorem}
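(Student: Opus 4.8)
The plan is to realize the three regions as a \emph{corner cut} of the plane. After rotating coordinates generically so that no two of the $2m+9$ points share an $x$- or a $y$-coordinate, I would first place a vertical line $\ell_1:\,x=a$ with exactly $m$ points to its left; call this halfplane $R_1$. The remaining $m+9$ points all lie in the open halfplane $\{x>a\}$, and among them I would place a horizontal line $\ell_2:\,y=b$ so that exactly $9$ of them satisfy $y>b$. Setting $R_3=\{x>a,\ y>b\}$ and $R_2=\{x>a,\ y<b\}$ then yields three pairwise disjoint convex regions (a halfplane and two quadrant-type intersections of halfplanes) that cover every point of the set, with $|R_1\cap S|=|R_2\cap S|=m$ and $|R_3\cap S|=9$. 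The entire problem thus reduces to choosing the cuts so that the $9$-point set $Z:=R_3\cap S$ contains a $5$-hole.

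For the generic situation I would invoke Corollary \ref{corollary:nine_points}: it suffices that $|\bbV(CH(Z))|\geq 4$. This is the typical outcome, since the leftmost, rightmost, topmost and bottommost points of $Z$ are automatically vertices of $CH(Z)$; whenever these four extreme points are distinct we are immediately done. Hence the only configurations I must rule out are those in which these extremes collapse so that $CH(Z)$ is a triangle.

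The hard case is therefore $|\bbV(CH(Z))|=3$. Here I would appeal to Theorem \ref{th:layer}: a $9$-point set with a triangular hull and no $5$-hole must lie in the layer class $L\{3,3,3\}$ or $L\{3,5,1\}$, so in the bad case $Z$ is one of these two highly rigid configurations, with all six non-hull points trapped inside the hull triangle. The resolution I envisage is to exploit the remaining freedom in the second cut: by rotating $\ell_2$ about a suitable pivot (or sliding it while compensating), one can swap the point of $Z$ nearest to $\ell_2$ with the point of $R_2\cap S$ nearest to $\ell_2$, obtaining a new $9$-point region that is still separated from $R_1$ and $R_2$ by convex cuts and still leaves $m$ points on each side. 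Since $L\{3,3,3\}$ and $L\{3,5,1\}$ are destroyed by almost any such single-point exchange, altering the layer sizes away from $(3,3,3)$ and $(3,5,1)$, the exchanged region acquires either a non-triangular hull (whence Corollary \ref{corollary:nine_points} applies) or a $5$-hole directly via Lemmas \ref{lm:lm2} and \ref{lm:lm3}. The main obstacle, and the part requiring the most care, is precisely this last step: showing that an admissible exchange preserving the exact $m,m,9$ split and the convexity of all three regions always exists and necessarily moves $Z$ out of both forbidden layer classes; a careful analysis of where the pivot point of $R_2\cap S$ lies relative to the triangle $CH(Z)$ and of how it reshapes the convex layers is what makes the argument go through.
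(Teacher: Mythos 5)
Your corner-cut setup and the first reduction are sound: the two cuts exist after a generic rotation, the three regions are convex and disjoint, and Corollary \ref{corollary:nine_points} disposes of every case with $|\bbV(CH(Z))|\geq 4$. But the entire difficulty of the theorem lives in the case $|\bbV(CH(Z))|=3$, and there your argument stops being a proof in two distinct places. First, the specific move you describe --- exchanging the point $p\in Z$ nearest to $\ell_2$ with the point $q\in R_2\cap S$ nearest to $\ell_2$ --- need not be realizable by \emph{any} straight-line cut: since disjoint convex regions have linearly separable point sets, the exchange requires $CH\bigl((Z\setminus\{p\})\cup\{q\}\bigr)$ and $CH\bigl(((R_2\cap S)\setminus\{q\})\cup\{p\}\bigr)$ to be disjoint, and this can fail. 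For instance, take $p=(0,1)$ with the rest of $Z$ near $(\pm 10,2)$, and $q=(0,-1)$ with the rest of $R_2\cap S$ near $(\pm 10,-2)$: after the swap, the segment from $q$ to $(10,2)$ and the segment from $p$ to $(10,-2)$ cross at $(10/3,0)$, so the two hulls intersect and no separating line exists. A continuous rotation of $\ell_2$ about a pivot does yield \emph{some} one-for-one exchange (points cross the line one at a time in general position), but then the pair that gets swapped is dictated by the geometry; you cannot choose it, which undercuts the way you set up the move.

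Second, and more fundamentally, the claim that $L\{3,3,3\}$ and $L\{3,5,1\}$ are ``destroyed by almost any such single-point exchange'' is unsupported, and it is also not quite the right target: what you need is that the new 9-point set contains a 5-hole, and a single exchange can perfectly well produce another 5-hole-free configuration, i.e.\ another set in one of the forbidden classes. You provide no invariant, no progress measure, and no termination argument, so your procedure could in principle cycle among bad configurations forever; this missing verification is exactly the content of the theorem. The paper's proof (Section \ref{proof2m+9}) never uses Theorem \ref{th:layer} here and avoids free-floating cuts altogether: it anchors the partition at a convex hull vertex $u_1$, orders the remaining points angularly about $u_1$ into wedges $U$, $V$, $W$ of sizes $m$, $9$, $m$, and then runs a closed case analysis on $|\bbV(CH(T))|\in\{3,4,5\}$ for $T=V\backslash\{v_9\}\cup\{u_1\}$, in each case exhibiting an explicit one- or two-point exchange (such as $v_9\leftrightarrow w_q$ or $v_1\leftrightarrow u_1$) whose validity is certified by concrete cone-emptiness conditions together with Corollary \ref{corollary:nine_points} and the 5-redundancy tool of Corollary \ref{cor:5redun}. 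To salvage your approach you would have to supply, for each forbidden configuration, a provably realizable exchange together with a proof that the exchanged 9-set contains a 5-hole --- at which point you would be rebuilding the paper's case analysis in a setting with strictly less structural control.
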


Since $H(5)=10$, the trivial lower bound on $F_5(n)$ is
$\lfloor\frac{n}{10}\rfloor$. Observe that any set of 47 points can be
partitioned into two sets of 19 points each, and another set of 9 points
containing a 5-hole, by Theorem \ref{th:twomplusnine}. Hence, from Theorems
\ref{th:nineteen} and \ref{th:twomplusnine}, it follows that, $F_5(47)=5$.
Using this result, we obtain an improved lower bound on $F_5(n)$.

\begin{theorem}
$F_5(n)\geq\lfloor\frac{5n}{47}\rfloor$. \label{th:th1}
\end{theorem}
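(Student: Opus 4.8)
The plan is to reduce the whole statement to a finite check for $n\le 46$, by first proving a super-additivity property of $F_5$ and then peeling off blocks of $47$ points.

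\emph{Step 1: super-additivity via a separating line.} First I would observe that for any set $S$ of $n$ points in general position and any $0\le a\le n$, one may fix a generic direction (so that all points have distinct abscissae) and place a line so that exactly $a$ points lie in one open halfplane $S_L$ and the remaining $n-a$ in the opposite open halfplane $S_R$. Taking disjoint convex partitions of $S_L$ and $S_R$ that realize the optimal numbers of $5$-holes and forming their union yields a disjoint convex partition of $S$: every point is covered, each piece is convex, and two pieces on opposite sides of the line have disjoint hulls. The numbers of $5$-holes add. Since $S_L$ is a particular set of size $a$ while $F_5(a)$ is the minimum over all such sets (and likewise for $S_R$), this gives $F_5(n)\ge F_5(a)+F_5(n-a)$ for every $0\le a\le n$. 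Taking $a=1$ shows in particular that $F_5$ is non-decreasing.

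\emph{Step 2: base values.} Next I would record a few base estimates from the results already in hand. Harborth's theorem $H(5)=10$ gives $F_5(10)\ge 1$; Theorem~\ref{th:nineteen} gives $F_5(19)\ge 2$; and splitting $38$ points into two $19$-point sets by Step~1 gives $F_5(38)\ge 2F_5(19)\ge 4$. Applying Theorem~\ref{th:twomplusnine} with $m=10$ partitions any $29$ points into two disjoint $10$-point regions and a disjoint $9$-point region, each containing a $5$-hole (the first two by $H(5)=10$, the third by the theorem), so that $F_5(29)\ge 3$; the same theorem with $m=19$ reproduces $F_5(47)\ge 2F_5(19)+1\ge 5$, exactly as in the discussion preceding the statement.

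\emph{Step 3: residue check and assembly.} Writing $n=47t+r$ with $0\le r\le 46$, the identity $\lfloor 5n/47\rfloor=5t+\lfloor 5r/47\rfloor$ holds since $235t/47=5t$. By Step~1 I would peel off $t$ disjoint blocks of $47$ points, obtaining $F_5(n)\ge t\,F_5(47)+F_5(r)\ge 5t+F_5(r)$, so it remains only to verify $F_5(r)\ge\lfloor 5r/47\rfloor$ for $0\le r\le 46$. Here $\lfloor 5r/47\rfloor$ equals $0,1,2,3,4$ on the bands $[0,9],[10,18],[19,28],[29,37],[38,46]$ respectively, and by the monotonicity from Step~1 the base values $F_5(10)\ge 1$, $F_5(19)\ge 2$, $F_5(29)\ge 3$, $F_5(38)\ge 4$ cover these bands one by one. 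This yields $F_5(n)\ge\lfloor 5n/47\rfloor$ and proves Theorem~\ref{th:th1}.

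\emph{Main obstacle.} The arithmetic is routine once the framework is in place; the one point demanding genuine care is the super-additivity of Step~1, namely verifying that optimal disjoint convex partitions of the two sides really glue into a single disjoint convex partition of $S$ and that this gluing is compatible with the $\min$--$\max$ definition of $F_5(n)$. After that, the key simplification is that reduction modulo $47$ leaves only the finitely many residues $r\le 46$, each landing in one of five bands that is matched exactly by a single base value together with monotonicity, so no further small-case geometry is needed.
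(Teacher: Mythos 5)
Your proposal is correct and is essentially the paper's own argument: the paper likewise peels off strips of $47$ points by a horizontal sweep (your separating-line super-additivity in lemma form), uses $F_5(47)=5$ from Theorems \ref{th:nineteen} and \ref{th:twomplusnine}, and handles the remainder over exactly the same five residue bands $[0,9],[10,18],[19,28],[29,37],[38,46]$ with values $0,1,2,3,4$. Your explicit base estimates $F_5(29)\geq 3$ (Theorem \ref{th:twomplusnine} with $m=10$) and $F_5(38)\geq 4$ (two $19$-point blocks) simply spell out what the paper asserts tersely for the bands $k=3,4$, so the two proofs coincide in substance.
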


\begin{proof}Let $S$ be a set of $n$ points in the plane, no three of which are collinear.
By a horizontal sweep, we can divide the plane into $\lceil\frac{n}{47}\rceil$
disjoint strips, of which $\lfloor\frac{n}{47}\rfloor$ contain 47 points each
and one remaining strip $R$, with $|R|<47$. The strips having 47 points
contain at least 5 disjoint 5-holes, since $F_5(47)=5$ (Theorems
\ref{th:nineteen} and \ref{th:twomplusnine}). If $9k+1\leq |R|\leq 9k+9$, for
$k=0$ or $k=1$, there exist at least $k$ disjoint 5-holes in $R$. If $19\leq
|R|\leq 28$, Theorem \ref{th:nineteen} guarantees the existence of 2 disjoint
5-holes in $R$. Finally, if $9k+2\leq |R|\leq 9k+10$, for $k=3$ or $4$, at
least $k$ disjoint 5-holes exist in $R$. Thus, the total number of disjoint
5-holes in a set of $n$ points is always at least
$\lfloor\frac{5n}{47}\rfloor$. \hfill $\Box$
\end{proof}

We can obtain a better lower bound on $F_5(n)$ for infinitely many $n$, of the
form $n=28 \cdot 2^{k-1}-9$ with $k\geq 1$, by the repeated application of Theorem
\ref{th:twomplusnine}.

\begin{theorem}
$F_5(n)\geq (3n-1)/28$, for $n=28\cdot2^{k-1}-9$ and $k\geq 1$. \label{th:th2}
\end{theorem}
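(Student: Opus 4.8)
The plan is to prove this bound by induction on $k$, using Theorem \ref{th:twomplusnine} as the engine that doubles our working set size at each stage. First I would observe that the claimed formula can be rewritten in a more suggestive form: for $n = 28\cdot 2^{k-1} - 9$, we have $3n - 1 = 84\cdot 2^{k-1} - 28 = 28(3\cdot 2^{k-1} - 1)$, so the bound $(3n-1)/28 = 3\cdot 2^{k-1} - 1$ is an integer. Thus the statement to be proved is precisely $F_5(28\cdot 2^{k-1} - 9) \geq 3\cdot 2^{k-1} - 1$, and I would verify the base case $k=1$ directly: here $n = 28 - 9 = 19$ and the target is $3 - 1 = 2$, which is exactly the content of Theorem \ref{th:nineteen}.

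For the inductive step, suppose the bound holds for some $k$, so any set of $n_k = 28\cdot 2^{k-1} - 9$ points contains at least $3\cdot 2^{k-1} - 1$ disjoint 5-holes. I want to reach $n_{k+1} = 28\cdot 2^k - 9$ points. The key algebraic identity is that $n_{k+1} = 2n_k + 9$, since $2(28\cdot 2^{k-1} - 9) + 9 = 28\cdot 2^k - 18 + 9 = 28\cdot 2^k - 9$. This is exactly the shape needed to invoke Theorem \ref{th:twomplusnine} with $m = n_k$: any set of $2n_k + 9$ points can be split into three disjoint convex regions, two holding $n_k$ points each and a third holding $9$ points that contains a 5-hole. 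Because the three regions are pairwise disjoint (their convex hulls do not intersect), every 5-hole found within one region is automatically disjoint from every 5-hole found in another. Applying the induction hypothesis to each of the two $n_k$-point regions yields at least $2(3\cdot 2^{k-1} - 1)$ disjoint 5-holes, and the 9-point region contributes one more, giving a total of at least $2(3\cdot 2^{k-1} - 1) + 1 = 3\cdot 2^k - 2 + 1 = 3\cdot 2^k - 1 = 3\cdot 2^{(k+1)-1} - 1$, which is the desired bound for $k+1$.

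The main obstacle I anticipate is not any single hard lemma but rather the bookkeeping around disjointness and the arithmetic. Specifically, I must be careful that Theorem \ref{th:twomplusnine} genuinely guarantees the three regions are convex and pairwise non-overlapping, so that holes from distinct regions cannot share points or have crossing convex hulls; this is what licenses simply summing the counts. I would state explicitly that disjointness is preserved across the partition, since the entire lower bound collapses if two holes from different regions could interfere. The arithmetic identities $n_{k+1} = 2n_k + 9$ and $(3n-1)/28 = 3\cdot 2^{k-1} - 1$ are routine but load-bearing, so I would present them cleanly rather than leaving them implicit. Beyond that, the proof is a clean induction whose only substantive inputs are the two main theorems already established.
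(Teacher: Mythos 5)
Your proposal is correct and matches the paper's own proof essentially line for line: the same induction on $k$ with base case $k=1$ given by Theorem \ref{th:nineteen}, the same identity $g(k+1)=2g(k)+9$ enabling Theorem \ref{th:twomplusnine}, and the same count $F_5(g(k+1))\geq 2h(k)+1=h(k+1)$ where $h(k)=3\cdot 2^{k-1}-1$. Your explicit verification that $(3n-1)/28$ is an integer and that disjointness of the convex regions licenses summing the hole counts is sound bookkeeping that the paper leaves implicit.
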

\begin{proof}Let $g(k)=28\cdot2^{k-1}-9$ and $h(k)=3\cdot 2^{k-1}-1$.
We need to show $F_5(g(k))\geq h(k)$. We prove the inequality by induction on
$k$. By Theorem \ref{th:nineteen}, the inequality holds for $k=1$. Suppose the
result is true for $k$, that is, $F_5(g(k))\geq h(k)$. Since, $g(k+1)=2g(k)+9$,
any set of $g(k+1)$ points can be partitioned into three disjoint convex
regions, two of which contain $g(k)$ points each, and the third a set of 9
points containing a 5-hole by Theorem \ref{th:twomplusnine}. Hence,
$F_5(g(k+1))$ $=F_5(2g(k)+9)\geq 2h(k)+1=h(k+1)$. This completes the induction
step, proving the result for $n=28\cdot2^{k-1}-9$. \hfill $\Box$
\end{proof}

\section{Proof of Theorem \ref{th:nineteen}}
\label{proofnineteen}

Let $S$ be a set of 19 points in the plane in general position. We say $S$ is
{\it admissible} if it contains two disjoint 5-holes. We prove Theorem
\ref{th:nineteen} by considering the various cases based on the size of
$|\bbV(CH(S))|$. The proof is divided into two subsections. The first section
considers the cases where $|\bbV(CH(S))|\geq 4$, and the second section deals with
the case where $|\bbV(CH(S))|=3$.

\subsection{$|\bbV(CH(S))|\geq 4$}

Let $CH(S)$ be the polygon $s_1s_2\ldots s_k$, where $k=|\bbV(CH(S))|$ and $k\geq 4$. A diagonal $d:=s_is_j$ of
$CH(S)$, is called a {\it dividing} diagonal if
$$\left||\mathcal H(s_is_j, s_m)\cap \bbV(CH(S))|-|\overline \mathcal H(s_is_j, s_m)\cap \bbV(CH(S))|\right|=c,$$
where $c$ is 0 or 1 according as $k$ is even or odd, and $s_m\in \bbV(CH(S))$
is such that $m\ne i, j$. Consider a dividing diagonal $d:=s_is_j$ of $CH(S)$.
Observe that for any fixed index $m\ne i, j$, either $|\mathcal H(s_is_j,
s_m)\cap S|\geq 9$ or $|\overline \mathcal H(s_is_j, s_m)\cap S|\geq 9$. Now,
we have the following observation.

\begin{observation}If for some dividing diagonal $d=s_is_j$ of $CH(S)$, $|\mathcal H(s_is_j, s_m)\cap S|>10$, where $m\ne i, j$, then $S$ is admissible.
\label{ob:10+t}
\end{observation}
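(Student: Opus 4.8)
The plan is to exhibit a single line $m$ that cuts $S$ into two parts, each of which is \emph{forced} to contain a $5$-hole, so that the two holes are automatically disjoint. Write $\ell$ for the line through $s_i$ and $s_j$, oriented so that $s_m$ lies in $\mathcal H(s_is_j,s_m)$, which by hypothesis contains at least $11$ points of $S$. First I would order all $19$ points of $S$ by their signed perpendicular distance to $\ell$ (positive on the $s_m$ side), let $F$ be the $10$ points of largest signed distance, and let $G=S\setminus F$ be the remaining $9$. Because the positive side already holds at least $11$ points, every point of $F$ lies strictly in $\mathcal H(s_is_j,s_m)$, the endpoints $s_i,s_j$ (at distance $0$) lie in $G$, and a line $m$ parallel to $\ell$ can be slipped between $F$ and $G$.

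With this split the first hole is immediate: since $|F|=10$, Harborth's theorem ($H(5)=10$) gives a $5$-hole $P_1\subseteq F$. As $CH(F)$ lies strictly on the far side of $m$ while all of $G$ lies on the near side, no point of $S$ can fall inside $P_1$, so $P_1$ is a genuine $5$-hole of $S$. It then suffices to find a $5$-hole $P_2$ inside the $9$-point set $G$; since $CH(G)$ lies on the near side of $m$ and $CH(P_1)$ on the far side, any such $P_2$ is disjoint from $P_1$ and $S$ is admissible.

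The crux, and the main obstacle, is thus showing that the $9$ points of $G$ contain a $5$-hole, which an arbitrary $9$-point set need not do (witness the classes $L\{3,3,3\}$ and $L\{3,5,1\}$ of Theorem \ref{th:layer}). I would sidestep this by proving $|\bbV(CH(G))|\geq 4$ and then invoking Corollary \ref{corollary:nine_points}. Both $s_i$ and $s_j$ are vertices of $CH(S)$, hence of $CH(G)$. Since $F$ absorbs only $10$ of the at least $11$ points on the $s_m$ side, $G$ still contains a point strictly above $\ell$, and the point of $G$ extreme in the upward direction is a third vertex of $CH(G)$, distinct from $s_i,s_j$. For the fourth vertex I would use the \emph{dividing-diagonal} hypothesis rather than the bare count: since $d=s_is_j$ is a dividing diagonal and $|\bbV(CH(S))|\geq 4$, the opposite halfplane $\overline{\mathcal H}(s_is_j,s_m)$ must contain at least one vertex of $CH(S)$, so $G$ contains a point strictly below $\ell$; the point of $G$ extreme in the downward direction is a fourth vertex of $CH(G)$, distinct from the other three since it alone lies strictly below $\ell$ (a point strictly above $\ell$ cannot be swallowed by a triangle spanned by points lying on or below $\ell$). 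Hence $|\bbV(CH(G))|\geq 4$, Corollary \ref{corollary:nine_points} delivers $P_2$, and the argument closes. I expect this final step—ruling out a triangular hull for $G$—to be the delicate part, and it is exactly here that the dividing-diagonal property, not merely the inequality $|\mathcal H(s_is_j,s_m)\cap S|>10$, is essential.
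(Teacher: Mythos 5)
Your proof is correct, and it takes a genuinely different route from the paper's. The paper works from the sparse side of the cut: it sets $Z=\overline{\mathcal H}_c(s_is_j,s_m)\cap S$ (at most $8$ points, containing $s_i$ and $s_j$) and runs a case analysis on $|\bbV(CH(Z))|\in\{3,4,\geq 5\}$, augmenting $Z$ with the first one or two angular neighbors $\beta,\gamma$ of $\overrightarrow{s_is_j}$ on the dense side (or, in the worst subcase, re-cutting along the line through $s_i$ and the $(6-b)$-th angular neighbor $\eta$) so that Lemma \ref{lm:lm1}, Lemma \ref{lm:lm2}, or Corollary \ref{corollary:nine_points} produces a 5-hole there, while the dense side always retains at least $10$ points for Harborth's theorem. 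You instead make a single translational cut: sweep a line parallel to $s_is_j$, hand the far $10$ points $F$ to Harborth, and force $|\bbV(CH(G))|\geq 4$ for the near $9$-point set $G$ because it contains $s_i$ and $s_j$ (at distance zero), a leftover point strictly on the $s_m$ side (that side has at least $11$ points and $F$ absorbs only $10$), and a vertex of $CH(S)$ strictly on the opposite side; extremality in the two normal directions makes the latter two points vertices of $CH(G)$, and Corollary \ref{corollary:nine_points} finishes. This collapses the paper's three cases into one uniform argument, which is a real simplification; what the paper's angular-neighbor machinery buys instead is reusability, since that rotational technique recurs throughout Section \ref{proofnineteen} in situations where no convenient parallel cut is available. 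Two small repairs to your write-up: first, the line $m$ need not exist verbatim, since two points of $S$ may be equidistant from the line $s_is_j$ (general position forbids only three, which would be collinear), so the $10$th and $11$th signed distances can tie; either perturb the sweep direction slightly, or note that $CH(F)\cap CH(G)$ is then confined to the threshold line, which $CH(F)$ and $CH(G)$ meet in two distinct single points, so disjointness and emptiness of both holes in $S$ survive. Second, your closing claim that the dividing property is essential for the fourth vertex overstates matters: since $s_is_j$ is a diagonal, its endpoints are non-adjacent on $CH(S)$, so both arcs of hull vertices are nonempty and convexity places at least one vertex of $CH(S)$ strictly on each side of the line; mere diagonality suffices, and the dividing property is needed only elsewhere to control point counts.
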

\begin{proof}Let $Z=\overline \mathcal H_c(s_is_j, s_m)\cap S$ and $\beta$ and $\gamma$ the first and the second angular neighbors of $\overrightarrow{s_is_j}$ in $\mathcal H(s_is_j, s_m)\cap S$, respectively. Now, $|\bbV(CH(Z))|\geq 3$, since $|\bbV(CH(S))|> 3$. We consider different cases based on the size of $CH(Z)$.

\begin{description}
\item[{\it Case} 1:] $|\bbV(CH(Z))|\geq 5$. This implies that $|\bbV(CH(Z \cup\{\beta\}))|\geq 6$ and so
$Z \cup\{\beta\}$ contains a 5-hole by Lemma \ref{lm:lm1}. This 5-hole is
disjoint from the 5-hole contained in $(\mathcal H(s_is_j, s_m)\cap
S)\backslash \{\beta\}$.

\item[{\it Case} 2:] $|\bbV(CH(Z))|=4$. If $|\bbI (CH(Z))|\geq 2$, then
$Z\cup\{\beta\}$ is a convex pentagon with at least two interior points. From
Lemma \ref{lm:lm2}, $Z\cup\{\beta\}$ contains a 5-hole which is disjoint from
the 5-hole contained in $(\mathcal H(s_is_j, s_m)\cap S)\backslash \{\beta\}$.
Otherwise, $|\bbI(CH(Z))|\leq 1$. Let $Z'=Z\cup\{\beta, \gamma\}$. It follows
from Lemmas \ref{lm:lm1} and \ref{lm:lm2} that $Z'$ always contains a 5-hole.
This 5-hole is disjoint from the 5-hole contained in $(\mathcal H(s_is_j,
s_m)\cap S)\backslash\{\beta, \gamma\}$, since $|(\mathcal H(s_is_j, s_m)\cap
S)\backslash\{\beta, \gamma\}|\geq 12$.

\item[{\it Case} 3:]$|\bbV(CH(Z))|=3$. If $|\bbI (CH(Z))|=5$,
$|\bbV(CH(Z\cup\{\beta\}))|= 4$ and $Z\cup\{\beta\}$ contains a 5-hole by Corollary
\ref{corollary:nine_points}, which is disjoint from the 5-hole contained in
$(\mathcal H(s_is_j, s_m)\cap S)\backslash\{\beta\}$. So, let $|\bbI
(CH(Z))|=b\leq 4$, which implies, $|\mathcal H(s_is_j, s_m)\cap S|=16-b$. Let
$\eta$ be the $(6-b)$-th angular neighbor of $\overrightarrow{s_is_j}$ in
$\mathcal H(s_is_j, s_m)\cap S$. Let $S_1=\mathcal H_c(\eta s_i, s_j)\cap S$
and $S_2=\overline\mathcal H(\eta s_i, s_j)\cap S$. Now, since $|S_1|=9$ and
$|\bbV(CH(S_1))|\geq 4$, $S_1$ contains 5-hole, by Corollary
\ref{corollary:nine_points}. This 5-hole disjoint from the 5-hole contained in
$S_2$. \hfill $\Box$
\end{description}
\end{proof}

Observation \ref{ob:10+t} implies that for any dividing diagonal $d:=s_is_j$
and for any fixed vertex $s_m$, with $m\ne i, j$, $S$ is admissible unless
$|\mathcal H(s_is_j, s_m)\cap S|\leq 10$ and $|\overline\mathcal H(s_is_j,
s_m)\cap S|\leq 10$. This can now be used to show the admissibility of $S$
whenever $|\bbV(CH(S))|\geq 8$.

\begin{lemma}
$S$ is admissible whenever $|\bbV(CH(S))|\geq 8$. \label{lm:ch8}
\end{lemma}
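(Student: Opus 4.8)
The plan is to run a case analysis driven entirely by Observation~\ref{ob:10+t}. Since $k:=|\bbV(CH(S))|\ge 8$, fix a dividing diagonal $d=s_is_j$ together with its reference vertex $s_m$, and write $S_1=\mathcal H(s_is_j,s_m)\cap S$ and $S_2=\overline{\mathcal H}(s_is_j,s_m)\cap S$. Suppose $S$ is not admissible. By Observation~\ref{ob:10+t} both open halfplanes determined by $d$ carry at most $10$ points of $S$, and since together they account for the $17$ points of $S\setminus\{s_i,s_j\}$, the pair $(|S_1|,|S_2|)$ must be one of $(10,7),(9,8),(8,9),(7,10)$. I may assume $|S_1|\ge|S_2|$, so the two shapes to dispose of are $(10,7)$ and $(9,8)$.

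Two facts drive the argument. First, a $5$-hole lying strictly in the open halfplane $\mathcal H(s_is_j,s_m)$ and a $5$-hole lying in the closed opposite halfplane are automatically disjoint, because the line $s_is_j$ separates them and the former does not meet this line. Second, the closed side $S_\ell\cup\{s_i,s_j\}$ always has at least five hull vertices, namely $s_i,s_j$ together with the at least $\lfloor(k-2)/2\rfloor\ge 3$ vertices of $CH(S)$ lying strictly on that side; hence whenever this closed side has at least $9$ points it contains a $5$-hole, by Lemma~\ref{lm:lm1} if it has six or more hull vertices and by Lemma~\ref{lm:lm2} if it has exactly five (its interior then containing at least two points).

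With these in hand the easy cases settle directly. When $|S_1|=10$, Harborth's theorem ($H(5)=10$) gives a $5$-hole inside the open side $S_1$, which I pair with the $5$-hole in the $9$-point closed side $S_2\cup\{s_i,s_j\}$; disjointness follows from the first fact. When $|S_1|=9$ and $CH(S_1)$ is \emph{not} a triangle, Corollary~\ref{corollary:nine_points} yields a $5$-hole in the open side $S_1$, paired with a Harborth $5$-hole in the $10$-point closed side $S_2\cup\{s_i,s_j\}$. Because a dividing diagonal leaves at least $\lfloor(k-2)/2\rfloor$ hull vertices on each side, for $k\ge 10$ every open side carries at least four hull vertices, so the $(9,8)$ split is never bad and these cases are exhaustive; the only residual difficulty is thus confined to $k\in\{8,9\}$.

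The sole remaining situation, and the crux, is a $(9,8)$ split in which the $9$-point open side $S_1$ has a triangular hull, so that by Theorem~\ref{th:layer} it lies in $L\{3,3,3\}$ or $L\{3,5,1\}$ and contains no $5$-hole. Here $d$ alone cannot separate two holes: the available $5$-holes of $S_1\cup\{s_i,s_j\}$ come from the convex pentagon on the five consecutive hull vertices $s_i,a,b,c,s_j$ with its six interior points (Lemma~\ref{lm:lm2}), and these may be forced to use $s_i$ or $s_j$. To finish I would exploit the rigidity of the two layer classes. Keeping $d$, I would use the explicit second-layer pentagon of $L\{3,5,1\}$ (respectively the nested triangles of $L\{3,3,3\}$) together with exactly one of $s_i,s_j$ to build a $5$-hole avoiding the other endpoint, leaving at least ten points beyond that endpoint for a Harborth $5$-hole; alternatively, I would re-choose the dividing diagonal (for $k=8$ there are four, joining opposite hull vertices, and for $k=9$ the diagonal can be rotated) so that the six interior points of $S_1$ no longer collapse to a triangular configuration on a nine-point side, returning to the easy cases. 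Proving that some such choice always breaks the obstruction is the main obstacle, and it is precisely the abundance of hull vertices guaranteed by $k\ge 8$ (together with the fact that $k\ge 10$ already rules the bad configuration out) that should make it go through.
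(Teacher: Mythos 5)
Your setup and the easy cases are sound: the reduction via Observation~\ref{ob:10+t} to the splits $(10,7)$ and $(9,8)$, the treatment of $(10,7)$ by pairing a Harborth $5$-hole in the open $10$-point side with a $5$-hole in the closed $9$-point side (which indeed has at least five hull vertices), and the observation that for $k\geq 10$ the open $9$-point side always has at least four hull vertices, so Corollary~\ref{corollary:nine_points} applies. But the proof does not close. The crux --- a $(9,8)$ split with $k\in\{8,9\}$ in which the nine open-side points have a triangular hull, hence possibly lie in $L\{3,3,3\}$ or $L\{3,5,1\}$ and contain no $5$-hole --- is disposed of only by two sketched strategies (exploiting the rigidity of the two layer classes, or re-choosing the dividing diagonal), neither of which is carried out, and you yourself flag this as ``the main obstacle.'' Nothing you write excludes the possibility that \emph{every} dividing diagonal yields such a bad split, and the structural analysis of the two hole-free $9$-point classes is never done, so this is a genuine gap, not a routine verification.

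The missing idea --- and it is the entire content of the paper's Case~2 --- is to stop demanding that one of the two $5$-holes lie strictly inside the open $9$-point side $S_1$, and instead split the two diagonal endpoints between the sides. Set $W=(\overline{\mathcal H}(s_is_j,s_m)\cap S)\cup\{s_i\}$. Then $|W|=9$, and $\bbV(CH(W))$ contains $s_i$ together with the at least three vertices of $CH(S)$ lying strictly on the $S_2$ side, so $|\bbV(CH(W))|\geq 4$ and Corollary~\ref{corollary:nine_points} produces a $5$-hole in $W$ \emph{no matter how degenerate $S_1$ is}; pair it with a Harborth $5$-hole in the $10$-point set $(\mathcal H(s_is_j,s_m)\cap S)\cup\{s_j\}$. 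Disjointness survives the fact that both sets are now closed on the line $s_is_j$: each hole's convex hull meets that line in at most a single vertex, namely $s_i$ for the first and $s_j$ for the second, and $s_i\neq s_j$. With this one device the hole-free $9$-point configurations never arise, Theorem~\ref{th:layer} and any re-choice of diagonal become unnecessary, and the hypothesis $k\geq 8$ is used only to guarantee at least three hull vertices strictly on each side of a dividing diagonal.
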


\begin{proof}Let $d:=s_is_j$ be a dividing diagonal of $CH(S)$, and $s_m\in \bbV(CH(S))$ be such that $m\ne i, j$.
Since $|\bbV(CH(S))|\geq 8$, both $|\mathcal H(s_is_j, s_m)\cap \bbV(CH(S))|$ and
$|\overline\mathcal H(s_is_j, s_m)\cap \bbV(CH(S))|$  must be greater than 3.
Moreover, if $|\mathcal H(s_is_j, s_m)\cap S|>10$, Observation \ref{ob:10+t}
ensures that $S$ is admissible. Thus, we have the following two cases:

\begin{description}
\item[{\it Case} 1:]$|\mathcal H(s_is_j, s_m)\cap S|=10$. Now, since
$|\bbV(CH(\overline \mathcal H_c(s_is_j, s_m)\cap S))|\geq 4$, $\overline \mathcal
H_c(s_is_j, s_m)\cap S$ contains a 5-hole which is disjoint from the 5-hole
contained in $\mathcal H(s_is_j, s_m)\cap S$.

\item[{\it Case} 2:]$|\mathcal H(s_is_j, s_m)\cap S|=9$. As $|\bbV(CH(S))|\geq 8$ and
$\overrightarrow{s_is_j}$ is a dividing diagonal of $CH(S)$, we have
$|\overline\mathcal H(s_is_j, s_m)\cap \bbV(CH(S))|\geq 3$. Let $W=(\overline
\mathcal H(s_is_j, s_m)\cap S)\cup \{s_i\}$. Then from Corollary
\ref{corollary:nine_points}, $W$ contains a 5-hole, since $|W|=9$ and
$|\bbV(CH(W))|\geq 4$. The 5-hole contained in $W$ is disjoint from the 5-hole
contained in $(\mathcal H(s_is_j, s_m)\cap S)\cup\{s_j\}$. Hence $S$ is
admissible.

\item[{\it Case} 3:]$|\mathcal H(s_is_j, s_m)\cap S|\leq 8$. In this case, $|\overline\mathcal H(s_is_j, s_m)\cap S|\geq 9$, and the problem reduces to the previous cases. \hfill $\Box$
\end{description}
\end{proof}

Therefore, it suffices to show the admissibility of $S$ whenever $4\leq
|\bbV(CH(S))|\leq 7$. Observe that $S$ is admissible whenever $|\mathcal H(s_is_j,
s_m)\cap S|=10$ and $|\bbV(CH(\overline \mathcal H_c(s_is_j, s_m)\cap S))| \geq 4$.
Moreover, {\it Case} 2 of Lemma \ref{lm:ch8} shows that $S$ is admissible if
$|\mathcal H(s_is_j, s_m)\cap S|=9$ and $|\overline\mathcal H(s_is_j, s_m)\cap
\bbV(CH(S))|\geq 3$. Thus, hereafter we shall assume,

\begin{assumption}
For every dividing diagonal $s_is_j$ of $CH(S)$, there exists $s_m\in \bbV(CH(S))$, with
$m\ne i, j$, such that either $|\mathcal H(s_is_j, s_m)\cap S|=10$ and $|\bbV(CH(\overline
\mathcal H_c(s_is_j, s_m)\cap S))|=3$, or $|\mathcal H(s_is_j, s_m)\cap S|=9$
and $|\overline\mathcal H(s_is_j, s_m)\cap \bbV(CH(S))|\leq 2$.
\label{assumption2}
\end{assumption}

A dividing diagonal $s_is_j$ of $CH(S)$ is said to be an $(a, b)-splitter$ of
$CH(S)$, where $a\leq b$ are integers, if either $|\mathcal H(s_is_j, s_m)\cap
S\backslash\bbV(CH(S))|=a$ and $|\overline\mathcal H(s_is_j, s_m)\cap
S\backslash\bbV(CH(S))|=b$ or $|\mathcal H(s_is_j, s_m)\cap
S\backslash\bbV(CH(S))|=b$ and $|\overline\mathcal H(s_is_j, s_m)\cap
S\backslash\bbV(CH(S))|=a$.

The admissibility of $S$ in the different cases which arise are now proved as
follows:

\begin{figure*}[h]
\centering
\begin{minipage}[c]{0.33\textwidth}
\centering
\includegraphics[width=1.9in]
    {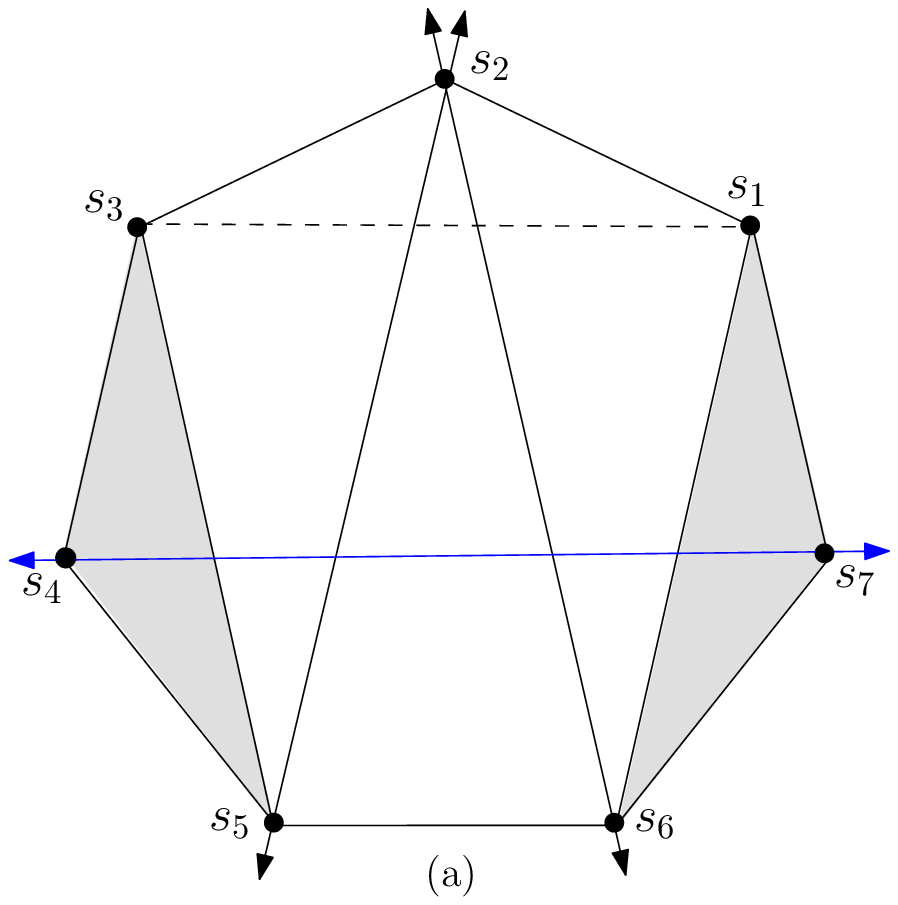}\\
\end{minipage}%
\begin{minipage}[c]{0.33\textwidth}
\centering
\includegraphics[width=1.9in]
    {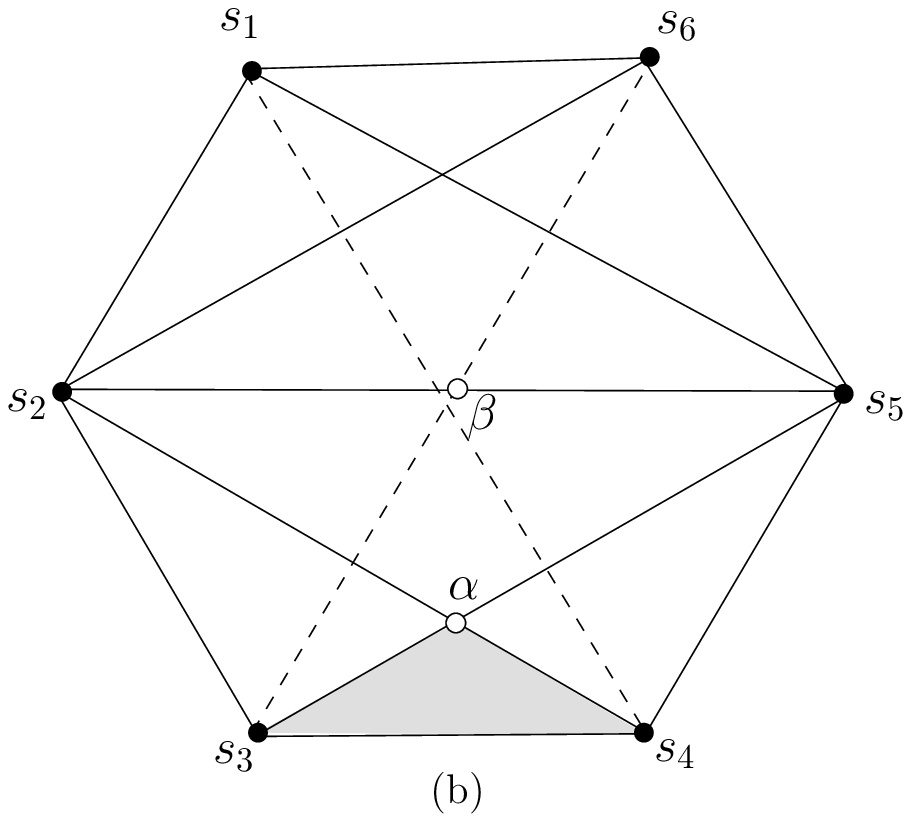}\\
\end{minipage}
\begin{minipage}[c]{0.33\textwidth}
\centering
\includegraphics[width=1.9in]
    {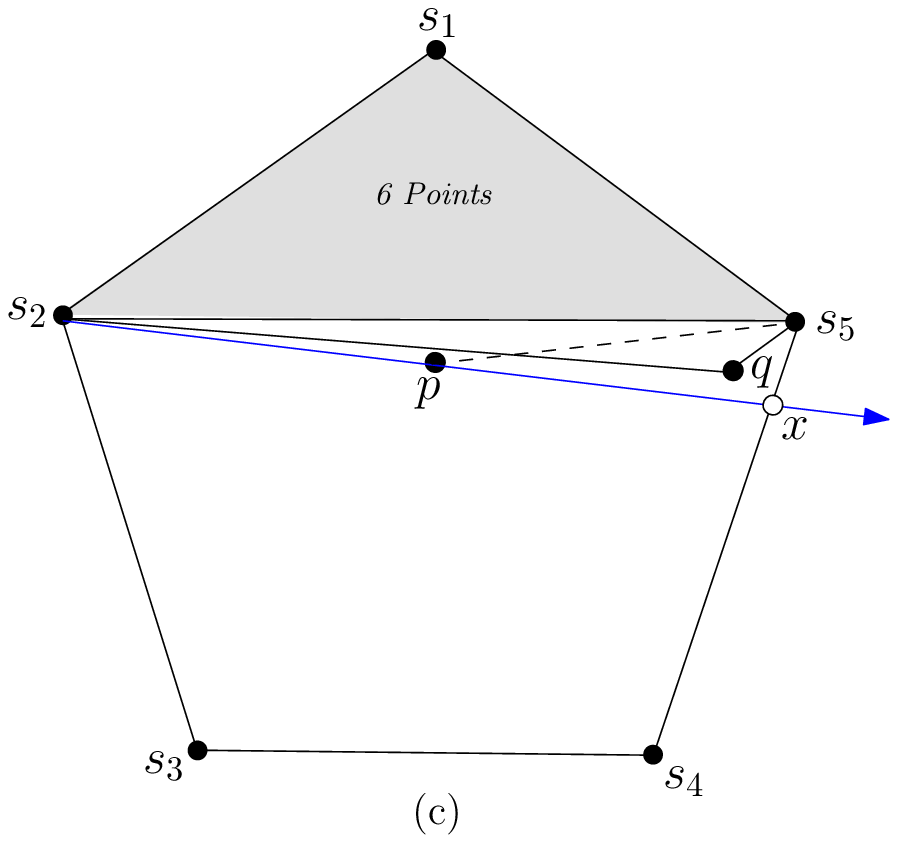}\\
\end{minipage}
\caption{Illustrations for the proof of Lemma \ref{lm:ch76}: (a) $|\bbV(CH(S))|=7$, (b) $|\bbV(CH(S))|=6$, (c) Illustration for the proof of Lemma \ref{lm:ch5}.}
  \label{fig:partII}\vspace{-0.15in}
\end{figure*}

\begin{lemma}$S$ is admissible whenever $6\leq |\bbV(CH(S))|\leq 7$.
\label{lm:ch76}
\end{lemma}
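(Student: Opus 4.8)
The plan is to reuse the two-sided partitioning mechanism that already drove Observation~\ref{ob:10+t} and Lemma~\ref{lm:ch8}. I fix a dividing diagonal $d:=s_is_j$ of $CH(S)$ and the vertex $s_m$ supplied by Assumption~\ref{assumption2}, and I split $S$ across the line $s_is_j$. The crucial feature is that any point set lying in the open halfplane $\mathcal H(s_is_j,s_m)$ and any point set lying in the closed halfplane $\overline{\mathcal H}_c(s_is_j,s_m)$ have convex hulls meeting, if at all, only on the line $s_is_j$; so a $5$-hole found strictly on the $\mathcal H$-side is automatically disjoint from a $5$-hole found on the $\overline{\mathcal H}_c$-side. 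By Observation~\ref{ob:10+t} I may assume each open halfplane carries at most $10$ points, so the $17$ points off the line split into two blocks each of size between $7$ and $10$.

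First I would pin down the surviving splitter types. A dividing diagonal distributes the $k-2$ non-endpoint hull vertices as $3+2$ when $|\bbV(CH(S))|=7$ and as $2+2$ when $|\bbV(CH(S))|=6$. In every such split the closed halfplane on the two-vertex side contains $s_i$, $s_j$ and at least two further vertices of $CH(S)$, all four in convex position; hence $|\bbV(CH(\overline{\mathcal H}_c(s_is_j,s_m)\cap S))|\ge 4$. This rules out the branch of Assumption~\ref{assumption2} in which $|\mathcal H(s_is_j,s_m)\cap S|=10$ and its complement has a triangular hull. The only surviving branch is $|\mathcal H(s_is_j,s_m)\cap S|=9$ with $|\overline{\mathcal H}(s_is_j,s_m)\cap\bbV(CH(S))|\le 2$, which a direct count shows to be a $(6,6)$-splitter for $|\bbV(CH(S))|=7$ and a $(6,7)$-splitter for $|\bbV(CH(S))|=6$.

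With the configuration fixed I would set $A=\mathcal H(s_is_j,s_m)\cap S$ (the $9$-point open block) and $B=\overline{\mathcal H}_c(s_is_j,s_m)\cap S$ (the $10$-point closed block). Then $B$ contains a $5$-hole because $H(5)=10$, and if $|\bbV(CH(A))|\ge 4$ then $A$ contains a $5$-hole by Corollary~\ref{corollary:nine_points}; the two are disjoint by the open-versus-closed halfplane argument, and the case is done. I expect the genuine obstacle to be the residual situation in which $A$ has a triangular hull and no $5$-hole: by Theorem~\ref{th:layer} such an $A$ must lie in $L\{3,3,3\}$ or $L\{3,5,1\}$, meaning all six of its non-vertex points sit inside the triangle spanned by its three hull vertices, which are vertices of $CH(S)$.

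To dispose of this residual case I would exploit two handles. The first is a safety net: the three hull vertices of $A$ together with $s_i,s_j$ span the convex polygon $s_i a_1 a_2 a_3 s_j$ (a pentagon when $|\bbV(CH(S))|=7$) or $s_i a_1 a_2 s_j$ (a quadrilateral when $|\bbV(CH(S))|=6$), which captures all the $\mathcal H$-interior points and hence carries a $5$-hole by Lemma~\ref{lm:lm2} (resp.\ by Lemma~\ref{lm:lm3}, since it then holds at least five interior points). The second is the freedom in the cut: rather than slicing along $s_is_j$, I would re-slice along the line through $s_i$ (or $s_j$) and a suitable angular neighbour of $\overrightarrow{s_is_j}$, exactly as in Case~3 of Observation~\ref{ob:10+t}, so as to transfer one point across the diagonal and upgrade the triangular block into one with a quadrilateral hull. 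The main technical burden, and the step I expect to be hardest, is showing that the two candidate $9$-point blocks obtained by assigning $s_i$ versus $s_j$ cannot both be degenerate simultaneously: both being in $L\{3,3,3\}$ or $L\{3,5,1\}$ would force all $12$ (resp.\ $13$) interior points of $S$ into a common triangular region incompatible with the $3+2$ (resp.\ $2+2$) distribution of the hull vertices, so at least one admissible split always survives, and pairing its non-degenerate $9$-point block with the complementary $10$-point block yields the two disjoint $5$-holes.
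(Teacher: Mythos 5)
Your opening reduction is sound and matches the paper: for $|\bbV(CH(S))|=7$ the closed side of any dividing diagonal retains at least four hull vertices, so Assumption~\ref{assumption2} forces every dividing diagonal to be a $(6,6)$-splitter, and for $|\bbV(CH(S))|=6$ a $(6,7)$-splitter; pairing the $10$-point closed block with the $9$-point open block via Corollary~\ref{corollary:nine_points} is exactly the right target. The genuine gap is in the residual case where the $9$-point block has a triangular hull. Your ``safety net'' does not close it: the pentagon $s_ia_1a_2a_3s_j$ (or the quadrilateral in the hexagon case) indeed contains a $5$-hole by Lemma~\ref{lm:lm2} (resp.\ Lemma~\ref{lm:lm3}), but that $5$-hole may use $s_i$, $s_j$, and the points lying strictly on the other side of the diagonal number only $8<H(5)=10$, so you have no guaranteed disjoint partner for it. Your re-slicing handle also lacks the slack that made {\it Case}~3 of Observation~\ref{ob:10+t} work (there one side carried at least $12$ points): with a $9/10$ split, rotating the cut through $s_i$ merely turns the old $10$-point side into a new $9$-point side whose hull size is again unconstrained, so the degeneracy can simply migrate across the line.

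The step you yourself flag as hardest is moreover wrong as stated. The two candidate $9$-point blocks obtained by adjoining $s_i$ versus $s_j$ to the same open side share the same six (resp.\ seven) interior points, so their simultaneous degeneracy constrains only those shared points, not ``all $12$ (resp.\ $13$) interior points of $S$''; and both blocks \emph{can} have triangular hulls at once --- place the interior points inside the intersection of the triangles $s_iab$ and $s_jab$, where $a,b$ are the hull vertices on that side. (Theorem~\ref{th:layer} is never needed here.) What the paper actually does is exploit several dividing diagonals simultaneously together with $5$-redundancy (Corollary~\ref{cor:5redun}): in the heptagon case, the $(6,6)$-splitters $s_2s_5$ and $s_2s_6$ force $\bbI(s_2s_5s_6)$ to be empty; assuming $s_2$ is not $5$-redundant in the relevant $10$-point closed blocks confines $\bbI(s_2s_3s_4s_5)\cap S$ to $\bbI(s_3s_4s_5)$ and $\bbI(s_2s_6s_1s_7)\cap S$ to $\bbI(s_1s_6s_7)$, whence $\bbI(s_1s_2s_3)$ is empty and the $9$-point side of the \emph{third} splitter $s_4s_7$ is guaranteed a hull of size at least $4$; the hexagon case is handled analogously with $s_2s_5$, $s_1s_4$, $s_3s_6$ and the diagonal-intersection points $\alpha$, $\beta$. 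Without some such emptiness argument forcing one specific $9$-point block to have a non-triangular hull, your proof does not go through.
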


\begin{proof}We consider the two cases based on the size of $|\bbV(CH(S))|$ separately as follows:

\begin{description}
\item[{\it Case} 1:] $|\bbV(CH(S))|=7$. Refer to Figure \ref{fig:partII}(a). From Assumption \ref{assumption2} it follows that every dividing diagonal of
$CH(S)$ must be a $(6, 6)$-splitter of $CH(S)$. As both $s_2s_5$ and $s_2s_6$ are $(6,6)$-splitters, it is clear that $\bbI(s_2s_5s_6)$ is empty in $S$. Now, if $s_2$ is 5-redundant in either $\mathcal H_c(s_2s_5, s_4)\cap S$ or $\mathcal H_c(s_2s_6, s_2)\cap S$, the admissibility of $S$ is immediate. Therefore, assume that $s_2$ is not 5-redundant in either $\mathcal H_c(s_2s_5, s_4)\cap S$ or $\mathcal H_c(s_2s_6, s_2)\cap S$. This implies that
$\bbI(s_2s_3s_4s_5)\cap S \subset \bbI(s_3s_4s_5)$ and $\bbI(s_2s_6s_1s_7)\cap S \subset \bbI(s_1s_6s_7)$. Therefore, $\bbI( s_1s_2s_3)$ is empty in $S$. Now, since $s_4s_7$ is also a $(6,
6)$-splitter of $CH(S)$, $|\bbV(CH(\mathcal H(s_4s_7, s_2)\cap S))|\geq 4$ (see
Figure \ref{fig:partII}(a)), and Corollary \ref{corollary:nine_points} implies
$\mathcal H(s_4s_7, s_2)\cap S$ contains a 5-hole. This 5-hole disjoint from
the 5-hole contained in $\mathcal H_c(s_4s_7, s_5)\cap S$.

\item[{\it Case} 2:] $|\bbV(CH(S))|=6$. Refer to Figure \ref{fig:partII}(b).
Again, Assumption \ref{assumption2} implies that every dividing diagonal of
$CH(S)$ must be a $(6, 7)$-splitter of $CH(S)$. W.l.o.g. assume that $|\bbI
(s_1s_2s_5s_6)\cap S|=7$ and $|\bbI (s_2s_3s_4s_5)\cap S|=6$. Let $\alpha$ be
the point of intersection of the diagonals of the quadrilateral $s_2s_3s_4s_5$.
If $s_2$ or $s_5$ is 5-redundant in $\mathcal H_c(s_2s_5, s_4)\cap S$, then the admissibility of $S$ is immediate.
Therefore, assume that neither $s_2$ nor $s_5$ is 5-redundant in $\mathcal H_c(s_2s_5, s_4)\cap S$.
This implies that $\bbI (s_2s_3s_4s_5)\cap S\subset \bbI( s_3\alpha s_4)$. Now, if $|\bbI (s_1s_2s_3s_4)\cap S|=6$, then $s_4$ is 5-redundant in $\mathcal H_c(s_1s_4, s_2)\cap S$ and the admissibility of $S$ follows. Similarly, if $|\bbI(s_3s_4s_5s_6)\cap S|=6$, then $S$ is admissible, as $s_3$ is 5-redundant in $\mathcal H_c(s_3s_6, s_5)\cap S$. Hence, assume $|\bbI (s_1s_2s_3s_4)\cap S|=|\bbI(s_3s_4s_5s_6)\cap
S|=7$. Now, as $|\bbI(s_2s_3s_4s_5)\cap S|=6$, $(\bbI(s_3s_4s_5s_6)\backslash \bbI(s_3s_4\alpha))\cap S\subset \bbI(s_5s_6\beta)$, where $\beta$ is the point of intersection of the diagonals $s_2s_5$ and $s_3s_6$. Therefore,
$|\bbV(CH(\mathcal H(s_3s_6, s_5)\cap S))|\geq
4$. Therefore, the 5-hole contained in $\mathcal
H(s_3s_6, s_5)\cap S$ is disjoint from the 5-hole contained in $\mathcal
H_c(s_3s_6, s_1)\cap S$. \hfill $\Box$
\end{description}
\end{proof}

\begin{lemma}$S$ is admissible whenever $|\bbV(CH(S))|=5$.
\label{lm:ch5}
\end{lemma}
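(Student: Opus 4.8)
The plan is to rule out the non-admissible case by a global double count of the $14$ interior points of $S$ against the five corner triangles of the hull. Write $CH(S)=s_1s_2\ldots s_5$, and for each index $i$ (taken modulo $5$) let $d_i:=s_{i-1}s_{i+1}$ be the diagonal cutting off the corner triangle $T_i:=s_{i-1}s_is_{i+1}$, and put $t_i:=|\bbI(T_i)\cap S|$. The key observation is that the part of the pentagon lying on the $s_i$-side of $d_i$ is exactly $T_i$, so the interior points of $S$ separated from the two remaining hull vertices by $d_i$ are precisely the $t_i$ points of $S$ inside $T_i$.

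First I would bound each $t_i$ from below. Each $d_i$ is a dividing diagonal, and applying Observation \ref{ob:10+t} with $s_m$ a hull vertex on the far side of $d_i$ from $s_i$ --- for which $|\mathcal{H}(d_i,s_m)\cap S|=2+(14-t_i)=16-t_i$ --- shows that, unless $S$ is already admissible, $16-t_i\le 10$, i.e.\ $t_i\ge 6$ for every $i$; equivalently, the standing Assumption \ref{assumption2} pins each $t_i$ to $\{6,7,8\}$. Either way $\sum_{i=1}^{5}t_i\ge 30$. The contradiction will come from the matching upper bound $\sum_{i=1}^{5}t_i\le 28$, since $30\le\sum_i t_i\le 28$ is absurd; hence no such non-admissible configuration exists and $S$ is admissible.

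The upper bound reduces to the geometric claim that each point of $\bbI(CH(S))\cap S$ lies in at most two of the corner triangles, for then $\sum_i t_i=\sum_{p}\#\{i:p\in\bbI(T_i)\}\le 2\cdot 14=28$. Establishing this claim is where I expect the real work to lie. I would prove it by showing that any two non-consecutive corner triangles meet in a single point: for a distance-two pair $T_i,T_{i+2}$, the triangle $T_{i+2}=s_{i+1}s_{i+2}s_{i+3}$ has the vertex $s_{i+1}$ on $d_i$ and its other two vertices in $\overline{\mathcal{H}}_c(d_i,s_i)$, while $T_i\subseteq\mathcal{H}_c(d_i,s_i)$; as these closed half-planes meet only along the line $s_{i-1}s_{i+1}$, we get $T_i\cap T_{i+2}=\{s_{i+1}\}$, which has no area. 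Because $C_5$ contains no triangle, any three of the indices $1,\dots,5$ include such a non-consecutive pair, so no point of $S$ can belong to three corner triangles, giving $\sum_i t_i\le 28$ and completing the case $|\bbV(CH(S))|=5$. Only consecutive triangles $T_i,T_{i+1}$ overlap, along the shared edge $s_is_{i+1}$, which is harmless for the count.
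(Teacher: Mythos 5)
Your proof is correct, and it takes a genuinely different route from the paper's. The paper also starts from Observation \ref{ob:10+t} and Assumption \ref{assumption2}, which restrict every dividing diagonal of the pentagon to a $(6,8)$- or $(7,7)$-splitter (i.e.\ $t_i\in\{6,7,8\}$ in your notation), but it only uses counting to kill the all-$(7,7)$ configuration (two corner triangles with $7$ interior points each force the middle triangle of the fan at $s_1$ to be empty, and symmetrically $\bbI(CH(S))$ would be empty); for a $(6,8)$-splitter it switches to a constructive geometric analysis, taking the nearest neighbor $p$ of $s_2s_5$, examining cones such as $Cone(s_5px)$, and invoking $5$-redundancy via Corollary \ref{cor:5redun} to exhibit two disjoint $5$-holes explicitly, with the second orientation of the splitter reduced back to the first. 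You instead globalize the counting: non-admissibility forces $t_i\geq 6$ for all five corner triangles via Observation \ref{ob:10+t}, so $\sum_{i=1}^5 t_i\geq 30$, while your separation claim $T_i\cap T_{i+2}=\{s_{i+1}\}$ — a correct and elementary consequence of convex position, since the chord $s_{i-1}s_{i+1}$ strictly separates $s_i$ from $s_{i+2}$ and $s_{i+3}$, and every $3$-subset of indices modulo $5$ contains such a non-consecutive pair — caps each interior point's multiplicity at two, giving $\sum_i t_i\leq 2\cdot 14=28$, a contradiction that disposes of the $(6,8)$ configurations as well. In effect you show that the configurations analyzed in the paper's Cases 1 and 2 cannot satisfy the standing assumption at all, so the lengthy case analysis there is avoidable. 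What each approach buys: yours is far shorter and purely combinatorial once Observation \ref{ob:10+t} is available; the paper's is constructive, actually locating the two disjoint $5$-holes, and its machinery (Corollary \ref{cor:5redun}, cone and redundancy arguments) is reused in the harder cases $|\bbV(CH(S))|=4$ and $|\bbV(CH(S))|=3$, where no such clean count is available. One small inaccuracy worth fixing, though it does not affect your argument: consecutive corner triangles $T_i$ and $T_{i+1}$ overlap in a two-dimensional region (the triangle with vertices $s_i$, $s_{i+1}$, and the intersection point of the diagonals $s_{i-1}s_{i+1}$ and $s_is_{i+2}$), not merely along the shared hull edge $s_is_{i+1}$; since your count only needs multiplicity at most two, this slip is harmless.
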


\begin{proof}Assumption \ref{assumption2} implies that a dividing diagonal of $CH(S)$ is either
a $(6, 8)$-splitter or a $(7, 7)$-splitter of $CH(S)$. To begin with suppose,
every dividing diagonal of $CH(S)$ is a $(7, 7)$-splitter of $|\bbV(CH(S))|$. Then
$|\bbI ( s_1s_2s_3)\cap S|=$ $|\bbI ( s_1s_4s_5)\cap S|=7$, which means that
$|\bbI ( s_1s_3s_4)\cap S|=0$. Similarly, $|\bbI ( s_2s_4s_5)\cap S|=$ $|\bbI (
s_3s_5s_1)\cap S|=$ $|\bbI ( s_4s_2s_1)\cap S|=$ $|\bbI ( s_5s_2s_3)\cap S|=0$.
This implies $|\bbI(CH(S))|=0$, which is a contradiction.

Therefore, assume that there exists a  $(6, 8)$-splitter of $CH(S)$. W.l.o.g., assume $s_2s_5$ is a $(6, 8)$-splitter of $CH(S)$. There are two
possibilities:
\begin{description}
\item[{\it Case} 1:] $|\bbI ( s_1s_2s_5)\cap S|=6$ and
$|\bbI (s_2s_3s_4s_5)\cap S|=8$. Refer to Figure \ref{fig:partII}(c). Let $p$
be the nearest neighbor of $s_2s_5$ in $\mathcal H(s_2s_5, s_4)\cap S$. W.l.o.g.,
assume $\bbI( s_1s_2p)\cap S$ is non-empty. Let $x$ be the point where
$\overrightarrow{s_2p}$ intersects the boundary of $CH(S)$. Then $\mathcal
H_c(s_2x, s_1)\cap S$ contains a 5-hole, and by Corollary \ref{cor:5redun} $s_2$
is 5-redundant in $\mathcal H_c(s_2p, s_1)\cap S$. Now, if $Cone(s_5px)\cap S$
is empty, the 5-hole contained in $(\mathcal H_c(s_2p, s_1)\cap
S)\backslash\{s_2\}$ is disjoint from the 5-hole contained in
$(\overline\mathcal H(s_2p, s_1)\cap S)\cup\{s_2\}$. Otherwise, assume
$Cone(s_5px)\cap S$ is non-empty. Let $q$ be the first angular neighbor of
$\overrightarrow {s_2s_5}$ in $Cone(s_5px)$. Observe that $\bbI( s_1s_2q)\cap
S$ is non-empty, since $\bbI( s_1s_2p)\cap S$ is assumed to be non-empty, and
$\mathcal H_c(s_2q, s_1)\cap S$ contains a 5-hole. Now, Corollary
\ref{cor:5redun} implies that $s_2$ is 5-redundant in $\mathcal H_c(s_2q,
s_1)\cap S$, and the admissibility of $S$ follows.

\item[{\it Case} 2:]$|\bbI ( s_1s_2s_5)\cap S|=8$ and
$|\bbI (s_2s_3s_4s_5)\cap S|=6$. Clearly, $\mathcal H_c(s_2s_5, s_3)\cap S$
contains a 5-hole. Now, if either $s_2$ or $s_5$ is 5-redundant in $\mathcal
H_c(s_2s_5, s_3)\cap S$, then $S$ is admissible. Therefore, assume $\bbI
(s_2s_3s_4s_5)\cap S\subset \bbI( s_3s_4\alpha)$, where $\alpha$ is the point
where the diagonals of the quadrilateral $s_2s_3s_4s_5$ intersect. The
problem now reduces to {\it Case} 1 with  respect to the dividing diagonal
$s_2s_4$. \hfill $\Box$
\end{description}
\end{proof}

The case $|\bbV(CH(S))|=4$ is dealt separately in the next section.

\subsubsection{$|\bbV(CH(S))|=4$}

As before, let $CH(S)$ be the polygon $s_1s_2s_3s_4$. From Observation \ref{ob:10+t}, we have to
consider the cases where a dividing diagonal of $CH(S)$ is either a $(6,
9)$-splitter or a $(7, 8)$-splitter of $CH(S)$.

Firstly, suppose some dividing diagonal of $CH(S)$, say $s_2s_4$, is a $(6,
9)$-splitter of $CH(S)$. Assume that $|\bbI ( s_1s_2s_4)\cap S|=6$ and $|\bbI (
s_2s_3s_4)\cap S|=9$. Begin by taking the nearest neighbor $p$ of $s_2s_4$ in
$\bbI(s_2s_3s_4)$. Then choose the first angular neighbor $q$ of either
$\overrightarrow{s_2s_4}$ or $\overrightarrow{s_4s_2}$ in $\bbI(s_2s_3s_4)$,
and proceed as in {\it Case} 1 of Lemma \ref{lm:ch5} to show the admissibility
of $S$.

Therefore, it suffices to assume that

\begin{assumption}Both the dividing diagonals of the quadrilateral $s_1s_2s_3s_4$ are $(7, 8)$-splitters of $CH(S)$.
\label{assumption_ch4_1}
\end{assumption}

W.l.o.g., let $|\bbI ( s_1s_2s_4)\cap S|=8$ and $|\bbI ( s_2s_3s_4)\cap
S|=7$. Let $\alpha$ be the point where the diagonals of $CH(S)$ intersect.
Observe, there always exists an edge of $CH(S)$ say, $s_2s_3$, such that
$|\bbI( s_1s_2s_3)\cap S|=$ $|\bbI ( s_2s_3s_4)\cap S|=7$, and $|\bbI (
s_1s_3s_4)\cap S|=$ $|\bbI ( s_1s_2s_4)\cap S|=8$. This implies, $|\bbI (
s_1s_2\alpha)\cap S|=|\bbI ( s_3s_4\alpha)\cap S|=n$, with $0\leq n\leq 7$. We
begin with the following simple observation

\begin{lemma}$S$ is admissible whenever $n=0$.
\label{ob:ch4_nzero}
\end{lemma}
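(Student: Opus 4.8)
The plan is to exploit the extreme clustering forced by $n=0$. Writing $a,b,c,d$ for the number of points of $S$ in the four triangles $\bbI(s_1s_2\alpha)$, $\bbI(s_2s_3\alpha)$, $\bbI(s_3s_4\alpha)$, $\bbI(s_4s_1\alpha)$, the hypothesis $n=0$ forces $a=c=0$, $b=7$, $d=8$. Thus every interior point of $S$ lies in one of the two \emph{opposite} triangles $\bbI(s_2s_3\alpha)$ and $\bbI(s_1s_4\alpha)$, and these triangles meet only at $\alpha$, which is not a point of $S$. Hence any 5-hole contained in $\bbI(s_2s_3\alpha)\cup\{s_2,s_3\}$ is automatically disjoint from any 5-hole contained in $\bbI(s_1s_4\alpha)\cup\{s_1,s_4\}$, so it suffices to produce one 5-hole in each region. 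The $8$-point side is immediate: the two vertices $s_1,s_4$ together with the eight points of $\bbI(s_1s_4\alpha)$ form a $10$-point set lying inside the triangle $s_1s_4\alpha$, so by $H(5)=10$ it contains a 5-hole, and this 5-hole is empty in all of $S$ since the only points of $S$ in that triangle are the ones listed.

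For the $7$-point side I would first run the small-set lemmas on the seven points $P=\bbI(s_2s_3\alpha)\cap S$ by themselves. If $|\bbV(CH(P))|\ge 6$ then Lemma \ref{lm:lm1} gives a 5-hole, and if $|\bbV(CH(P))|=5$ then $P$ is a convex pentagon with two interior points and Lemma \ref{lm:lm2} gives one; in either case the 5-hole lies inside $\bbI(s_2s_3\alpha)$ and we are done. Otherwise $CH(P)$ is a triangle or a quadrilateral, and I would adjoin $s_2,s_3$ and set $Z_1=\{s_2,s_3\}\cup P$, a $9$-point set lying inside the triangle $s_2s_3\alpha$. Whenever $|\bbV(CH(Z_1))|\ge 4$, Corollary \ref{corollary:nine_points} yields a 5-hole of $Z_1$, which is again empty in $S$ and confined to $\bbI(s_2s_3\alpha)$.

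The main obstacle is the residual case in which $CH(Z_1)$ is a triangle. Since $s_2,s_3$ are vertices of $CH(S)$ they are extreme in $Z_1$, so this triangle must be $s_2s_3p$ for some $p\in P$, and by Theorem \ref{th:layer} a $9$-point set with triangular hull and no 5-hole is of type $L\{3,3,3\}$ or $L\{3,5,1\}$; thus no 5-hole lives inside $\bbI(s_2s_3\alpha)$ and the pure clustering argument breaks down. To repair it I would borrow the opposite vertex $s_1$ across the empty triangle $\bbI(s_1s_2\alpha)$: the set $\{s_1,s_2,s_3\}\cup P$ has $10$ points, hence contains a 5-hole $H_1$ by $H(5)=10$, and since its subset $Z_1$ has none, $H_1$ must use $s_1$. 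As $s_2$ and all of $P$ lie strictly on the $s_2$-side of the line $s_1s_3$ while $\alpha\in s_1s_3$, the hull of $H_1$ meets the line $s_1s_3$ only along the segment $s_1s_3$, so $H_1$ is disjoint from any 5-hole taken strictly on the $s_4$-side of $s_1s_3$; such a partner is supplied by the eight points of $\bbI(s_1s_4\alpha)$, with $s_4$ adjoined if the eight points alone contain no 5-hole, all of which lie strictly on that side. I expect the delicate point to be the bookkeeping of which convex-hull vertices are spent on which side in the degenerate coincidence where \emph{both} nine-point clusters are simultaneously of type $L\{3,3,3\}$ or $L\{3,5,1\}$: here one must argue, by a direct analysis of these forced layer structures confined to the two opposite cones at $\alpha$, that at least one side always releases a corner for the other, so that the two 5-holes can still be separated by the line $s_1s_3$.
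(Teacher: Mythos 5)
Your clustering analysis is correct, and the first two stages of your plan are sound: with $n=0$ the fifteen interior points split as $|P|=7$ in $\bbI(s_2s_3\alpha)$ and $|Q|=8$ in $\bbI(s_1s_4\alpha)$, each of the 10-point sets $\{s_1,s_2,s_3\}\cup P$ and $\{s_1,s_4\}\cup Q$ coincides with $S$ inside a closed triangle and therefore yields a 5-hole of $S$, and your separation claims across the line $s_1s_3$ are valid. But the proof has a genuine gap exactly where you flag it, and what you call "the delicate point" of "bookkeeping" is not a fringe case -- it is the entire content of the lemma. In your residual case every 5-hole on the 7-point side uses $s_1$ (or, borrowing via the triangle $s_2s_3s_4$, uses $s_4$), so you must exhibit a 5-hole on the 8-point side avoiding the shared vertex. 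Your fallback, "the eight points of $\bbI(s_1s_4\alpha)$, with $s_4$ adjoined," is a 9-point set, and since $H(5)=10$ nine points guarantee nothing: Corollary \ref{corollary:nine_points} applies only when the hull has at least 4 vertices, and if $\{s_4\}\cup Q$ (and symmetrically $\{s_1\}\cup Q$) has a triangular hull of type $L\{3,3,3\}$ or $L\{3,5,1\}$, then a priori every 5-hole of the 10-point set $\{s_1,s_4\}\cup Q$ uses the vertex you need to spare. Your closing sentence merely asserts that "at least one side always releases a corner" and defers the argument; that unproven assertion is precisely the lemma.

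For comparison, the paper runs essentially your decomposition (mirrored to the other diagonal: $Z=(\bbH(s_2s_4,s_1)\cap S)\cup\{s_4\}$, which is your $\{s_1,s_4\}\cup Q$, against $\mathcal{H}_c(s_2s_4,s_3)\cap S=\{s_2,s_3,s_4\}\cup P$, the two 10-point sets sharing $s_4$) but closes the tie with the 5-redundancy machinery you lack. It case-splits on $|\bbV(CH(Z))|$: if it is at least 5, deleting $s_4$ leaves 9 points with hull of size at least 4, so $s_4$ is 5-redundant by Corollary \ref{corollary:nine_points}; if the hull is a quadrilateral or a triangle, it introduces the auxiliary point $r$, the nearest angular neighbor of $\overrightarrow{s_1s_3}$ in $Cone(s_4s_1s_3)$, and Corollary \ref{cor:5redun} guarantees that one of $s_1$, $s_4$, $r$ is 5-redundant in $Z$ -- freeing either the shared hull vertex for the opposite side, or the interior point $r$ with which the opposite 10-point set can be re-formed. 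To repair your write-up you need exactly such a redundancy statement identifying which vertex some 5-hole of the 10-point set can avoid; the layer classification of Theorem \ref{th:layer} alone cannot rescue the argument, since nothing in your setup excludes the bad 9-point configurations arising simultaneously on both sides of the diagonal.
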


\begin{proof}Let $Z=(\mathcal H(s_2s_4, s_1)\cap S)\cup\{s_4\}$. Observe that $|Z|=10$, which means $Z$ contains a 5-hole. If $|\bbV(CH(Z))|\geq 5$, $s_4$ is 5-redundant in $Z$, and $Z\backslash \{s_4\}$ contains a 5-hole which is disjoint from the 5-hole contained in $\mathcal H_c(s_2s_4, s_3)\cap S$. Let $r$ be the nearest angular neighbor of $\overrightarrow{s_1s_3}$ in $Cone(s_4s_1s_3)$. If $|\bbV(CH(Z))|=4$, either $r$ or $s_4$ is 5-redundant in $Z$ by Corollary \ref{cor:5redun}, and the admissibility of $S$ follows. Otherwise, $|\bbV(CH(Z))|=3$ and at least one of $s_1$, $s_4$, or $r$ is 5-redundant in $Z$ and the admissibility of $S$ follows similarly. \hfill$\Box$
\end{proof}

From the previous lemma, it suffices to assume $n>0$. Let $p$ be the first
angular neighbor of $\overrightarrow{s_2s_4}$ in $Cone(s_4s_2s_3)$ and $x$ the
intersection point of $\overrightarrow{s_2p}$ with the boundary of $CH(S)$. Let
$\alpha$ be the point of intersection of the diagonals of the quadrilateral
$s_1s_2s_3s_4$. If $Cone(s_3px)\cap S$ is non-empty, $|\bbV(CH(\mathcal H_c(s_2p,
s_3)\cap S))|\geq 4$. From Corollary \ref{corollary:nine_points}, $\mathcal
H_c(s_2p, s_3)\cap S$ contains a 5-hole which is disjoint from the 5-hole
contained in $(\mathcal H(s_2s_4, s_1)\cap S)\cup\{s_4\}$. Therefore, we shall
assume that

\begin{assumption}
$Cone(s_3px)\cap S$ is empty. \label{assumption_ch4_2}
\end{assumption}

Assumption \ref{assumption_ch4_2} and the fact that $n>0$ implies that $p\in
\bbI ( s_3\alpha s_4)\cap S$ (see Figure \ref{fig:hullq}(a)). Let $q$ be the
first angular neighbor of $\overrightarrow{ps_2}$ in $Cone(s_2ps_1)$. The
admissibility of $S$ in the remaining cases is proved in the following two
lemmas.

\begin{figure*}[h]
\centering
\begin{minipage}[c]{0.33\textwidth}
\centering
\includegraphics[width=2.0in]
    {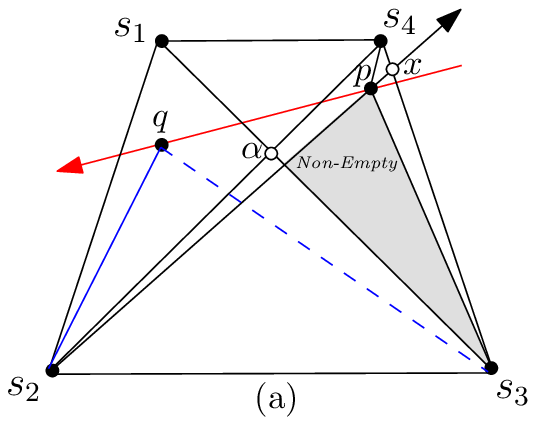}\\
\end{minipage}%
\begin{minipage}[c]{0.33\textwidth}
\centering
\includegraphics[width=2.0in]
    {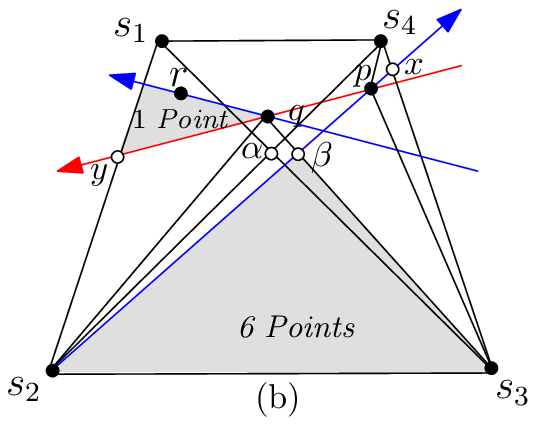}\\
\end{minipage}
\begin{minipage}[c]{0.33\textwidth}
\centering
\includegraphics[width=2.0in]
    {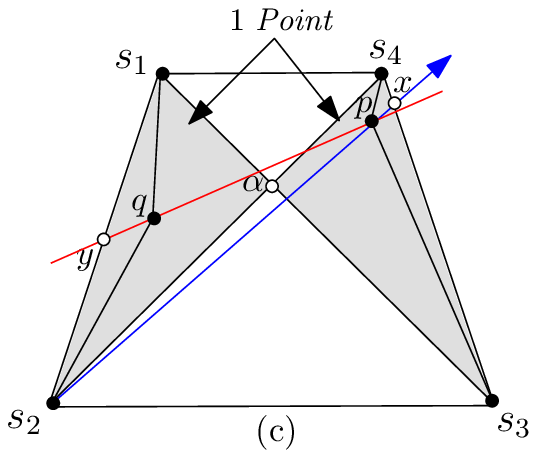}\\
\end{minipage}
\caption{$|\bbV(CH(S))|=4$: (a) $|\bbI ( s_1s_2\alpha)\cap S|=|\bbI
( s_3s_4\alpha)\cap S|=n\geq 2$ and $q\in \bbI ( s_2\alpha s_1)\cap S$, (b)
$|\bbI ( s_1s_2\alpha)\cap S|=|\bbI ( s_3s_4\alpha)\cap S|=n\geq 2$, and $q \in
\bbI( s_1\alpha s_4)\cap S$, (c) $|\bbI ( s_1s_2\alpha)\cap S|=|\bbI (
s_3s_4\alpha)\cap S|=n=1$.}
\label{fig:hullq}\vspace{-0.15in}
\end{figure*}

\begin{lemma}$S$ is admissible whenever $n\geq 2$.
\label{lm:ch4_ngeq2}
\end{lemma}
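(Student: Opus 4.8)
The plan is to exhibit two disjoint $5$-holes by cutting $S$ with a line through $s_2$ and invoking Corollaries \ref{corollary:nine_points} and \ref{cor:5redun} on the two sides. First I would record the point counts in the four triangles determined by the diagonals: from Assumption \ref{assumption_ch4_1} and the choice of $s_2s_3$ we have $|\bbI(s_1s_2\alpha)\cap S|=|\bbI(s_3s_4\alpha)\cap S|=n$, $|\bbI(s_1\alpha s_4)\cap S|=8-n$ and $|\bbI(s_2s_3\alpha)\cap S|=7-n$; since $n\geq 2$, both $\bbI(s_1s_2\alpha)$ and $\bbI(s_3s_4\alpha)$ are nonempty, and by Assumption \ref{assumption_ch4_2} the chosen $p$ lies in $\bbI(s_3\alpha s_4)$ with $Cone(s_3px)$ empty. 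The intended cut is along $\overrightarrow{s_2q}$, and I would split the argument according to the two possible locations of $q$, namely $q\in\bbI(s_2\alpha s_1)$ (Figure \ref{fig:hullq}(a)) and $q\in\bbI(s_1\alpha s_4)$ (Figure \ref{fig:hullq}(b)).

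In either case the goal follows the template of Case 1 of Lemma \ref{lm:ch5}: show that $\mathcal H_c(s_2q, s_1)\cap S$ contains a $5$-hole in which $s_2$ is $5$-redundant, and then pair it with a disjoint $5$-hole living in $(\overline{\mathcal H}(s_2q,s_1)\cap S)\cup\{s_2\}$. On the $s_1$-side I would check that $\mathcal H_c(s_2q,s_1)\cap S$ has at least nine points whose convex hull has at least four vertices (with $s_1$ and $s_2$ among them), so that Corollary \ref{corollary:nine_points} supplies a $5$-hole; the $5$-redundancy of $s_2$ then follows from Corollary \ref{cor:5redun} exactly as in Lemma \ref{lm:ch5}, using that the cut forces a point of $S$ into the corner triangle at $s_2$. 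The emptiness of $Cone(s_3px)$ from Assumption \ref{assumption_ch4_2} is what certifies that the complementary set, together with $s_2$, again has nine points with a quadrilateral hull, and hence its own $5$-hole.

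The two cases differ only in the bookkeeping of which interior triangles fall on each side of $\overrightarrow{s_2q}$. When $q\in\bbI(s_2\alpha s_1)$ the line $s_2q$ peels off the $s_1s_2\alpha$ corner, and I would balance the $8-n$ points of $\bbI(s_1\alpha s_4)$ against the $s_3$-side to keep both counts at nine or ten; when $q\in\bbI(s_1\alpha s_4)$ the cut crosses the diagonal $s_1s_3$, and I would instead rely on the $n\geq 2$ points of $\bbI(s_3s_4\alpha)$ and the position of $p$ to supply the fourth hull vertex on the $s_3$-side. In both subcases disjointness is automatic, since one $5$-hole lies strictly in $\mathcal H(s_2q,s_1)$ (it avoids $s_2$) while the other lies in $\overline{\mathcal H}_c(s_2q,s_1)$.

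The main obstacle I anticipate is precisely this count-and-hull bookkeeping: one must verify, uniformly over the two subcases and over the range $2\leq n\leq 7$, that each side of the cut simultaneously has at least nine points \emph{and} a convex hull with at least four vertices, so that Corollary \ref{corollary:nine_points} applies on each, while confirming that the $5$-redundancy of $s_2$ really does place the two holes in opposite open and closed halfplanes. The geometric inputs that make this work are the emptiness of $Cone(s_3px)$ and the minimality of $p$ and $q$ as angular neighbours; securing the fourth hull vertex on the $s_3$-side in the borderline configurations (small $n$, or $q$ lying close to the diagonal $s_1s_3$) is the delicate point.
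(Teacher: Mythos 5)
Your plan diverges from the paper's proof in a way that leaves a real gap. The paper does not cut along a line through the hull vertex $s_2$: it cuts along the chord $pq$ joining two \emph{interior} points, and the hypothesis $n\geq 2$ enters in two precise, load-bearing ways that your outline never reproduces. When $q\in\bbI(s_2\alpha s_1)$, the fact that $\bbI(s_3s_4\alpha)\cap S$ contains a point other than $p$ (this is where $n\geq 2$ is used), together with the emptiness of $Cone(s_3px)$, places that point in $\bbI(qps_3)$, so Corollary \ref{cor:5redun} makes $p$ --- not $s_2$ --- 5-redundant in $\mathcal H_c(pq,s_2)\cap S$, and the two 5-holes sit on the two sides of the line $pq$. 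When $q\in\bbI(s_1\alpha s_4)$, one must face the configuration in which \emph{neither} $p$ nor $q$ is 5-redundant in $\mathcal H_c(pq,s_2)\cap S$: then all of $\bbI(s_2s_3pq)\cap S$ is squeezed into $\bbI(s_2s_3\beta)$, and the paper re-cuts along $qr$, where $r$ is the \emph{second} angular neighbor of $\overrightarrow{qy}$ in $Cone(yqs_1)$ --- and $n\geq 2$ is exactly what guarantees that $r$ exists. Your proposal has no counterpart to this fallback case, and it is precisely the case in which a redundancy-at-the-cut argument breaks down.

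Second, the count-and-hull ``bookkeeping'' you flag as the anticipated obstacle is not bookkeeping that can be carried out for your cut: nothing in Assumptions \ref{assumption_ch4_1} and \ref{assumption_ch4_2} controls how the interior points are distributed across a line through $s_2$ and $q$. The quantities $n$, $7-n$, $8-n$ count points in the four triangles around $\alpha$, but the line $s_2q$ does not respect that partition, so your claim that each side of $\overrightarrow{s_2q}$ can be arranged to carry at least nine points with a quadrilateral hull is unsupported; note also that the template you invoke, {\it Case} 1 of Lemma \ref{lm:ch5}, is deployed by the paper only for the $(6,9)$-splitter preamble, where the side counts are pinned down, not for the $(7,8)$-splitters governing this lemma. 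Likewise, ``the cut forces a point of $S$ into the corner triangle at $s_2$'' is asserted rather than proved, and in the squeezed configuration above it fails in the form you need. So the proposal is a plausible opening move, but the two steps that actually consume the hypothesis $n\geq 2$ --- producing a second point of $\bbI(s_3s_4\alpha)$ inside $\bbI(qps_3)$, and producing the second angular neighbor $r$ --- are missing, and without them the argument does not close.
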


\begin{proof}
To begin with suppose, $q\in \bbI ( s_2\alpha s_1)\cap S$, as shown in Figure
\ref{fig:hullq}(a). By Assumption \ref{assumption_ch4_2}, there exists a point
in $\bbI(s_3s_4\alpha)\cap S$, different from the point $p$, which belongs to
$\bbI( qps_3)\cap S$. Hence, by Corollary \ref{cor:5redun}, $p$ is 5-redundant
in $\mathcal H_c(pq, s_2)\cap S$, and the 5-hole contained in $(\mathcal H(pq,
s_2)\cap S)\cup\{q\}$ is disjoint from the 5-hole contained in $(\mathcal H(pq,
s_1)\cap S)\cup\{p\}$.

Otherwise, assume that $q \in \bbI( s_1\alpha s_4)\cap S$ and refer to Figure
\ref{fig:hullq}(b). Observe that $S$ is admissible if either $p$ or $q$ is
5-redundant in $\mathcal H_c(pq, s_2)\cap S$. Hence, assume that neither $p$
nor $q$ is 5-redundant in $\mathcal H_c(pq, s_2)\cap S$. This implies
$\bbI(s_2s_3pq)\cap S\subset \bbI( s_2s_3\beta)$, where $\beta$ is the point of
intersection of the diagonals of the quadrilateral $ s_2s_3pq$. Let $r$ be the
second angular neighbor of $\overrightarrow{qy}$ in $Cone(yqs_1)$, where $y$ is
the point where $\overrightarrow{pq}$ intersects the boundary $CH(S)$. Note
that the point $r$ exists because $n\geq 2$ and $q\in \bbI( s_1s_4\alpha)\cap
S$. Now, the 5-hole contained in $(\mathcal H(qr, s_2)\cap S)\cup\{q\}$ is
disjoint from the 5-hole contained in $(\mathcal H(qr, s_1)\cap S)\cup\{r\}$ by
Corollary \ref{corollary:nine_points}. \hfill $\Box$
\end{proof}

\begin{lemma}$S$ is admissible whenever $n=1$.
\label{lm:ch4_n1}
\end{lemma}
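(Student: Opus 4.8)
```latex
\textbf{Proof proposal for Lemma \ref{lm:ch4_n1}.}

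The plan is to push the configuration established just before the statement of
this lemma to its final extremal case $n=1$ and extract the two disjoint
5-holes. Recall the setup: $CH(S)=s_1s_2s_3s_4$, both diagonals are
$(7,8)$-splitters (Assumption \ref{assumption_ch4_1}), $|\bbI(s_1s_2\alpha)\cap
S|=|\bbI(s_3s_4\alpha)\cap S|=n$, the point $p$ is the first angular neighbor of
$\overrightarrow{s_2s_4}$ in $Cone(s_4s_2s_3)$ with $p\in\bbI(s_3\alpha
s_4)\cap S$ (forced by Assumption \ref{assumption_ch4_2}), the point $x$ is the
intersection of $\overrightarrow{s_2p}$ with $\partial CH(S)$, $Cone(s_3px)\cap
S$ is empty, and $q$ is the first angular neighbor of
$\overrightarrow{ps_2}$ in $Cone(s_2ps_1)$. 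With $n=1$ the two thin triangles
$\bbI(s_1s_2\alpha)$ and $\bbI(s_3s_4\alpha)$ each contain exactly one point of
$S$, so $p$ is the \emph{unique} interior point on the $s_3s_4$ side of the
diagonal $s_2s_4$ below $\alpha$, and the single point in $\bbI(s_1s_2\alpha)$
severely restricts where $q$ can lie (see Figure \ref{fig:hullq}(c)). The
overall strategy mirrors Lemma \ref{lm:ch4_ngeq2}: locate a suitable point
($p$ or $q$) that is 5-redundant in one closed halfplane by Corollary
\ref{cor:5redun}, so that one side yields a 5-hole avoiding the cut point while
the complementary side, augmented by that point, independently contains a
5-hole by Corollary \ref{corollary:nine_points}.

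First I would record the exact point counts forced by $n=1$. Since the
$(7,8)$-splitter along $s_2s_4$ gives $|\bbI(s_2s_3s_4)\cap S|=7$ and
$|\bbI(s_1s_2s_4)\cap S|=8$, and since $n=1$ pins down one point in each of
$\bbI(s_1s_2\alpha)$ and $\bbI(s_3\alpha s_4)$, the remaining interior points
must distribute through $\bbI(s_2\alpha s_3)$ and $\bbI(s_1\alpha s_4)$. I would
then split on the location of $q$, exactly as in Lemma \ref{lm:ch4_ngeq2}: the
case $q\in\bbI(s_2\alpha s_1)\cap S$ and the case $q\in\bbI(s_1\alpha s_4)\cap
S$. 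In the first case the argument should go through essentially verbatim:
Assumption \ref{assumption_ch4_2} forces a point of $\bbI(s_3s_4\alpha)\cap S$
distinct from $p$ into $\bbI(qps_3)$, so by Corollary \ref{cor:5redun} $p$ is
5-redundant in $\mathcal H_c(pq,s_2)\cap S$, and the 5-hole in $(\mathcal
H(pq,s_2)\cap S)\cup\{q\}$ is disjoint from the one in $(\mathcal H(pq,s_1)\cap
S)\cup\{p\}$. The delicate case is $q\in\bbI(s_1\alpha s_4)\cap S$, where the
second-angular-neighbor point $r$ used in Lemma \ref{lm:ch4_ngeq2} may fail to
exist precisely because $n=1$ leaves only a single point in
$\bbI(s_1s_2\alpha)$.

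I expect the main obstacle to be this second subcase, where the counting slack
that made Lemma \ref{lm:ch4_ngeq2} clean has evaporated. Here I would abandon
the search for $r$ and instead balance the set directly: assuming neither $p$
nor $q$ is 5-redundant in $\mathcal H_c(pq,s_2)\cap S$ forces
$\bbI(s_2s_3pq)\cap S\subset\bbI(s_2s_3\beta)$ for the diagonal-intersection
point $\beta$ of $s_2s_3pq$, which concentrates all the $s_2s_3$-side points
into a narrow triangle. I would then pick a fresh dividing cut through $q$ (for
instance along $\overrightarrow{qs_2}$ or a line separating $s_1$ from the bulk
of the points) so that one side forms a 9-point set with convex-hull size at
least $4$ and hence a 5-hole by Corollary \ref{corollary:nine_points}, while the
remaining $10$ points on the other side contain a disjoint 5-hole. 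The crux is
verifying, using $n=1$ together with Assumptions \ref{assumption_ch4_1} and
\ref{assumption_ch4_2} and the emptiness of $Cone(s_3px)\cap S$, that such a cut
genuinely produces a $9$-$10$ split with a non-triangular hull on the $9$-point
side; this is the bookkeeping step I would carry out most carefully, tracking
where each of the nine interior points on the $s_2s_3$ side sits relative to the
chosen separating line.
```
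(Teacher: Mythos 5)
Your first case contains an outright error, not just a gap. You claim that for $q\in\bbI(s_2\alpha s_1)$ the argument of Lemma \ref{lm:ch4_ngeq2} goes through ``essentially verbatim,'' with Assumption \ref{assumption_ch4_2} supplying a point of $\bbI(s_3s_4\alpha)\cap S$ \emph{different from} $p$ inside $\bbI(qps_3)$, which would make $p$ 5-redundant in $\mathcal H_c(pq,s_2)\cap S$. But when $n=1$ the triangle $\bbI(s_3s_4\alpha)$ contains exactly one point of $S$, and that point is $p$ itself (its location there was forced by Assumption \ref{assumption_ch4_2} and $n>0$); the auxiliary point you invoke cannot exist. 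This is precisely why the paper treats this case from scratch: it sets $Z=(\mathcal H(pq,s_1)\cap S)\cup\{q\}$, splits on whether $\bbI(s_4pq)\cap S$ is empty, and in the empty subcase runs a new structural argument --- if neither $q$ nor $s_4$ is 5-redundant in $Z$, a 5-hole with edge $qs_4$ extends to a 6-hole with edges $ps_4$ and $pq$ that cannot have $s_1$ as a vertex, yielding a 5-hole with edge $ps_4$ avoiding both $s_1$ and $q$; hence $s_1$ and $q$ are 5-redundant in $\mathcal H_c(s_4q,s_1)\cap S$, and the companion 5-hole sits in $\mathcal H_c(s_1s_3,s_2)\cap S$. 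Nothing in your sketch replaces this.

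Your second case is a plan rather than a proof, and the plan misdiagnoses the fix. You propose to ``abandon the search for $r$'' and find an unspecified cut through $q$ producing a $9$--$10$ split with non-triangular hull on the 9-point side, explicitly deferring the verification --- but that verification \emph{is} the lemma. The paper does not abandon $r$: it takes $r$ to be the \emph{first} (rather than second) angular neighbor of $\overrightarrow{qy}$ in $Cone(yqs_1)$, which does exist when $n=1$, and splits on its position. If $r\in\bbI(s_1s_4\alpha)$, then $|\bbV(CH(\mathcal H_c(pq,s_1)\cap S))|=6$, both $p$ and $q$ are 5-redundant there, and the line $pq$ itself is the admissible cut. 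If $r\in\bbI(s_1s_2\alpha)$, a further application of Corollary \ref{cor:5redun} to the quadrilateral $s_1rps_4$ confines $(\mathcal H(pq,s_1)\cap\bbI(s_1s_4\alpha))\cap S$ to $\bbI(s_1s_4\gamma)$, and only this confinement guarantees $|\bbV(CH(\mathcal H(s_2s_4,s_1)\cap S))|\geq 4$, so that the original diagonal $s_2s_4$ (the 9-point side being the eight points of $\bbI(s_1s_2s_4)\cap S$ together with $s_1$) works via Corollary \ref{corollary:nine_points}. Without deriving such confinement facts, the hull of the 9-point side of your hypothetical cut may well be a triangle, and your argument stalls exactly at the step you flagged as ``bookkeeping.''
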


\begin{proof}
To begin with let $q\in \bbI( s_1\alpha s_2)$. Refer to Figure \ref{fig:hullq}(c). Assume, $\bbI( s_4pq)\cap S$ is
non-empty and let $Z=(\mathcal H(pq, s_1)\cap S)\cup\{q\}$. Observe that
$|\bbV(CH(Z))|\geq 4$, and by Corollary \ref{cor:5redun} either $q$ or $s_4$ is
5-redundant in $Z$, and the admissibility of $S$ follows.

Otherwise, assume that $\bbI( s_4pq)\cap S$ is empty. If either $q$ or $s_4$ is
5-redundant in $Z$, the admissibility of $S$ is immediate. Therefore, it
suffices to assume that there exists a 5-hole in $Z$ with $qs_4$ as an edge.
This implies that we have a 6-hole with $ps_4$ and $pq$ as edges. Observe that
$s_1$ cannot be a vertex of this 6-hole. Hence, there exists a 5-hole with
$ps_4$ as an edge, which does not contain $s_1$ and $q$ as vertices. Thus,
$s_1$ and $q$ are 5-redundant in $\mathcal H_c(s_4q, s_1)\cap S$. This 5-hole
is disjoint from the 5-hole contained in $\mathcal H_c(s_1s_3, s_2)\cap S$.

\begin{figure*}[h]
\centering
\begin{minipage}[c]{0.5\textwidth}
\centering
\includegraphics[width=2.75in]
    {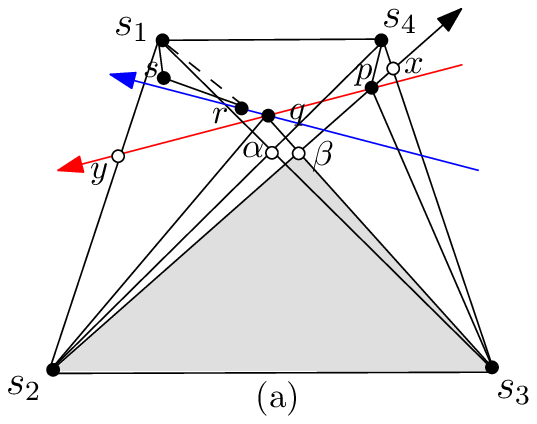}\\
\end{minipage}%
\begin{minipage}[c]{0.5\textwidth}
\centering
\includegraphics[width=2.75in]
    {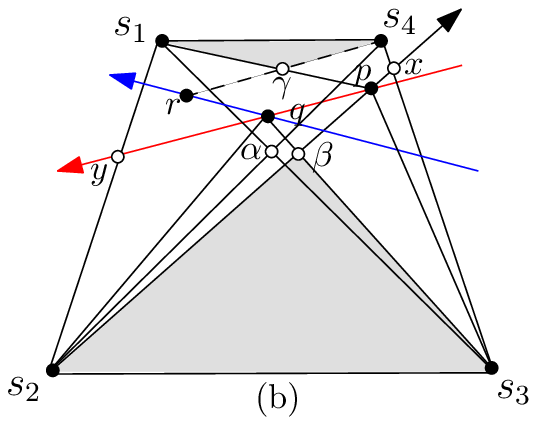}\\
\end{minipage}
\caption{$|\bbV(CH(S))|=4$ with $|\bbI (s_1s_2\alpha)\cap S|=|\bbI(s_3s_4\alpha)\cap
S|=n=1$: (a) $q, r \in \bbI( s_1s_4\alpha)\cap S$, and (b) $q\in
\bbI(s_1s_4\alpha)$ and $r\in \bbI(s_1s_2\alpha)$.} \vspace{-0.15in}
\label{fig:hullq_n=1}
\end{figure*}

Finally, suppose $q \in \bbI(s_1s_4\alpha)\cap S$ (see Figure
\ref{fig:hullq_n=1}(a)). Observe that since $Cone(s_3px)\cap S$ is empty by
Assumption \ref{assumption_ch4_2}, $S$ is admissible whenever either $p$ or $q$
is 5-redundant in $\mathcal H_c(pq, s_2)\cap S$. Hence, assume that
$\bbI(s_2s_3pq)\cap S\subset \bbI( s_2s_3\beta)$, where $\beta$ is the point of
intersection of the diagonals of the quadrilateral $ s_2s_3pq$. Let $r$ be the
first angular neighbor of $\overrightarrow{qy}$ in $Cone(yqs_1)$, where $y$ is
the point where $\overrightarrow{pq}$ intersects the boundary $CH(S)$. If $r\in
\bbI(s_1s_4\alpha)\cap S$, then $|\bbV(CH(\mathcal H_c(pq, s_1)\cap S))|=6$ and both
$p$ and $q$ are 5-redundant in $\mathcal H_c(pq, s_1)\cap S$ (Figure
\ref{fig:hullq_n=1}(a)). Thus, the partition of $S$ given by $\mathcal H(pq,
s_1)\cap S$ and $\mathcal H_c(pq, s_2)\cap S$ is admissible. Otherwise, assume
that $r\in \bbI(s_1s_2\alpha)\cap S$, as shown in Figure
\ref{fig:hullq_n=1}(b). Let $\gamma$ be the point of intersection of the
diagonals of the quadrilateral $s_1rps_4$. From Corollary \ref{cor:5redun}, it
is easy to see that whenever there exists a point of $(\mathcal H(pq, s_1)\cap
\bbI(s_1s_4\alpha))\cap S$ outside $\bbI(s_1s_4\gamma)$, at least one of $p$ or
$r$ is 5-redundant in $(\mathcal H(pq, s_1))\cap S\cup\{p\}$, and the
admissibility of $S$ is immediate. Therefore, it suffices to assume that
$(\mathcal H(pq, s_1)\cap \bbI(s_1s_4\alpha))\cap S\subset \bbI(s_1s_4\gamma)$.
Then $|\bbV(CH(\mathcal H(s_2s_4, s_1)\cap S))|\geq 4$ and $|\mathcal H(s_2s_4,
s_1)\cap S|=9$. Hence, the 5-hole contained in $\mathcal H(s_2s_4, s_1)\cap S$
(Corollary \ref{corollary:nine_points}), is disjoint from the 5-hole contained
in $\mathcal H_c(s_2s_4, s_3)\cap S$. \hfill $\Box$
\end{proof}

\subsection{$|\bbV(CH(S))|=3$}

Let $s_1, s_2, s_3$ be the three vertices of $CH(S)$. Let $\bbI(CH(S))=\{u_1, u_2, \dots, u_{16}\}$ be such that $u_i$ is
the $i$-th angular neighbor of $\overrightarrow{s_1s_2}$ in
$Cone(s_2s_1s_{3})$. For $i\in\{1, 2, 3\}$ and $j\in\{1, 2, \ldots, 16\}$, let
$p_{ij}$ be the point where $\overrightarrow{s_iu_j}$ intersects the boundary
of $CH(S)$. For example, $p_{17}$ is the point of intersection of
$\overrightarrow{s_1u_7}$ with the boundary of $CH(S)$.

If $\bbI( u_7p_{17}s_2)$ is not empty in $S$, $|\bbV(CH(\mathcal H_c(s_1u_7,
s_2)\cap S))|\geq 4$ and by Corollary \ref{corollary:nine_points}, $\mathcal
H_c(s_1u_7, s_2)\cap S$ contains a 5-hole which is disjoint from the 5-hole
contained in $\mathcal H(s_1u_7, s_3)\cap S$. Therefore, $\bbI(
u_7p_{17}s_2)\cap S$ can be assumed to be empty. In fact, we can make the
following more general assumption.

\begin{assumption}
For all $i\ne j\ne k\in\{1, 2, 3\}$, $Cone(p_{it}u_ts_j)\cap S$ is empty, where
$u_t$ is the seventh angular neighbor of $\overrightarrow{s_is_j}$ in
$Cone(s_js_is_k)\cap S$. \label{assumption3}
\end{assumption}

Now, we have the following observation.

\begin{observation}If for some $i\ne j\ne k\in\{1, 2, 3\}$, $Cone(p_{it}u_ts_j)\cap S$ is non-empty, where
$u_t$ is the eighth angular neighbor of $\overrightarrow{s_is_j}$ in
$Cone(s_js_is_k)$, then $S$ is admissible. \label{ob:obeighthalways}
\end{observation}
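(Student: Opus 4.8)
By relabelling the vertices of $CH(S)$ we may take $(i,j,k)=(1,2,3)$, so that $u_t=u_8$ is the eighth angular neighbour of $\overrightarrow{s_1s_2}$ in $Cone(s_2s_1s_3)$ and the hypothesis provides a point $w\in Cone(p_{18}u_8s_2)\cap S$. Every point of $\bbI(CH(S))$ lies in $Cone(s_2s_1s_3)$, and $w$ lies between the rays $\overrightarrow{s_1s_2}$ and $\overrightarrow{s_1u_8}$, so $w=u_m$ with $m\le 7$; moreover $w$ lies strictly on the far side of the chord $s_2u_8$ from $s_1$, i.e.\ inside the triangle $u_8p_{18}s_2$. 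This extra point on the $s_2$-side of $\overrightarrow{s_1u_8}$ is what I shall exploit.

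The plan is to cut $S$ by the line $s_1u_8$, assigning $u_8$ to the $s_2$-side and $s_1$ to the $s_3$-side. This yields $B=\{u_9,\dots,u_{16},s_3,s_1\}$, a set of ten points lying (apart from $s_1$, which sits on the cut) strictly in one open halfplane, and $A=\{u_1,\dots,u_8,s_2\}$, nine points lying (apart from $u_8$) strictly in the other. Because $CH(A)$ meets the line $s_1u_8$ only at $u_8$ and $CH(B)$ meets it only at $s_1$, the two hulls are disjoint, so any $5$-hole found inside $A$ is automatically disjoint from any $5$-hole found inside $B$. Since $|B|=10=H(5)$, the set $B$ contains a $5$-hole (Lemma \ref{lm:lm1} together with the Erd\H os--Szekeres bound). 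Thus the whole statement reduces to producing a $5$-hole in the nine-point set $A$; by Corollary \ref{corollary:nine_points} it suffices to show that $CH(A)$ is not a triangle.

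To force $|\bbV(CH(A))|\ge 4$ I would argue as follows. The point $s_2$ is extreme in $S$, hence a vertex of $CH(A)$, and $u_8$ is a vertex of $CH(A)$ since all of $A\setminus\{u_8\}$ lies strictly on the $s_2$-side of the line $s_1u_8$. Because $w=u_m$ lies strictly below the chord $s_2u_8$ (on the side away from $s_1$), the point of $A$ farthest from the line $s_2u_8$ on that side is a third vertex $v\notin\{s_2,u_8\}$. The hull $CH(A)$ could still be a triangle only if all remaining points of $A$ lay inside $\triangle s_2vu_8$, and hence below the chord $s_2u_8$. But Assumption \ref{assumption3}, applied to the ordering $(1,2,3)$, makes $\triangle u_7p_{17}s_2$ empty, forcing $u_1,\dots,u_6$ to lie above the chord $s_2u_7$; together with the position of $w$ this places at least one $u_i$ above the chord $s_2u_8$ as well, and that point is a fourth vertex of $CH(A)$.

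The main obstacle is precisely this last step: a single cut through $s_1$ always gives a $9/10$ split in which the ten-point side is harmless but the nine-point side need not, a priori, have a non-triangular hull. The resolution is a short case analysis on the position of $w=u_m$ relative to the chords $s_2u_7$ and $s_2u_8$, combined with the $s_2$-side instance of Assumption \ref{assumption3}; in the borderline subcases where Corollary \ref{corollary:nine_points} does not apply directly, one instead peels $u_8$ (or $w$) off and closes up a convex pentagon carrying at most two interior points, finishing via Lemma \ref{lm:lm2} and Corollary \ref{cor:5redun}, exactly as in the treatment of $\triangle u_7p_{17}s_2$ preceding Assumption \ref{assumption3}.
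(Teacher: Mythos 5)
Your opening moves are sound and close to the paper's: cutting along the line $s_1u_8$, noting the ten points on the $s_3$-side contain a 5-hole since $H(5)=10$, and reducing the nine-point side $A=\{s_2,u_1,\dots,u_8\}$ to showing $|\bbV(CH(A))|\geq 4$ via Corollary \ref{corollary:nine_points}. The gap is exactly at the step you flagged as the main obstacle, and it is not a borderline technicality but a false claim: Assumption \ref{assumption3} together with the position of $w$ does \emph{not} force any $u_i$ onto the $s_1$-side of the chord $s_2u_8$. Take $w=u_7\in\bbI(s_2u_8p_{18})$ and place $u_1,\dots,u_6$ in the wedge at $s_2$ between the chords $s_2u_7$ and $s_2u_8$ --- strictly on the $s_1$-side of $s_2u_7$ (so the triangle $u_7p_{17}s_2$ is empty and Assumption \ref{assumption3} holds) but strictly on the far side of $s_2u_8$, inside the triangle $s_2u_7u_8$. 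Concretely, with $s_1=(0,0)$, $s_2=(10,0)$, $s_3=(0,10)$, $u_8=(2,2)$, $u_7=(4,3)$, the six points fit in the region $\{(x,y): (10-x)/4<y<\min((10-x)/2,\,x/2+1)\}$, e.g.\ near $(6,1.2)$. Then $CH(A)$ is the triangle $s_2u_7u_8$, your fourth vertex never materializes, and since a 9-point set with triangular hull need not contain any 5-hole at all (Theorem \ref{th:layer}), the reduction to Corollary \ref{corollary:nine_points} cannot be salvaged: in this configuration one cannot even guarantee a 5-hole inside $A$.

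This residual configuration is precisely where the paper does its real work. In the paper's notation, $T=\mathcal H_c(s_1u_8,s_2)\cap S$ has hull $s_1s_2u_ru_8$, the unavoidable case is $u_r=u_7$ with all six interior points in $\bbI(s_2\alpha u_7)$, and the paper then \emph{abandons the cut $s_1u_8$ entirely}: if $Cone(s_1u_7u_8)\cap S$ is non-empty it cuts along $u_7u_k$, where $u_k$ is the first angular neighbor of $\overrightarrow{u_7s_1}$ in that cone, and otherwise along $u_7u_8$, in both cases using Corollary \ref{cor:5redun} to show $u_7$ is 5-redundant on one side so that it can be donated to make the other side a ten-point set. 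Your proposal fixes the separating line once and for all, so it cannot reproduce this move; and your fallback sketch (peel off $u_8$ or $w$ and close up a convex pentagon with at most two interior points, then Lemma \ref{lm:lm2}) does not work as stated --- Lemma \ref{lm:lm2} requires at least two interior points, a pentagon with exactly one interior point can fail to contain a 5-hole, and in the configuration above no suitable pentagon on the $s_2$-side is available. To complete the proof you must let the cut rotate to $u_7u_k$ or $u_7u_8$ in the hard case, as the paper does.
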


\begin{proof}W.l.o.g., let $i=1$ and $j=2$, which means, $t=8$.
Set $T=\mathcal H_c(s_1u_8, s_2)\cap S$. Suppose, there exists a point
$u_r\in\bbI( s_2u_{8}p_{18})\cap S$. This implies that $|\bbV(CH(T))|\geq 4$. When
$|\bbV(CH(T))|\geq 5$, $u_{8}$ is 5-redundant in $T$ and $T\backslash\{u_8\}$
contains a 5-hole which is disjoint from the 5-hole contained in $(\mathcal
H(s_1u_8, s_3)\cap S)\cup\{u_8\}$.

Hence, it suffices to assume $|\bbV(CH(T))|=4$. Let $\bbV (CH(T))=\{s_1, s_2, u_r,
u_{8}\}$, with $r\leq 7$, and $\alpha$ the point of intersection of the
diagonals of the quadrilateral $s_1s_2u_ru_{8}$. By Corollary \ref{cor:5redun},
it follows that unless $\bbI(s_1s_2u_ru_8)\cap S\subset \bbI( s_2\alpha u_r)$,
either $s_1$ or $u_{8}$ is 5-redundant in $T$ and hence $S$ is admissible.
Therefore, assume $\bbI(s_1s_2u_ru_8)\cap S\subset \bbI( s_2\alpha u_r)$, which
implies $u_r=u_7$, as shown in Figure \ref{fig:thull_new}(a). Suppose,
$Cone(s_1u_7u_8)\cap S$ is non-empty, and let $u_k$ be the first angular
neighbor of $\overrightarrow{u_7s_1}$ in $Cone(s_1u_7u_{8})$. Then
$\bbI(u_ku_7s_2)\cap S$ is non-empty, and $u_7$ is 5-redundant in $\mathcal
H_c(u_7u_k, s_1)\cap S$. Thus, the 5-hole contained in $\mathcal H(u_7u_k,
s_1)\cap S)\cup\{u_k\}$ is disjoint from the 5-hole contained in $(\mathcal
H(u_7u_k, s_3)\cap S)\cup\{u_7\}$. However, if $Cone(s_1u_7u_{8})\cap S$ is
empty, $u_7$ is 5-redundant in $\mathcal H_c(u_7u_8, s_1)\cap S$ by Corollary
\ref{cor:5redun}, and the 5-hole contained in $(\mathcal H(u_7u_8, s_1)\cap
S)\cup\{u_8\}$ is disjoint from the 5-hole contained in $(\mathcal H(u_7u_8,
s_3)\cap S)\cup\{u_7\}$. \hfill $\Box$
\end{proof}

\begin{figure*}[h]
\centering
\begin{minipage}[c]{0.33\textwidth}
\centering
\includegraphics[width=1.75in]
    {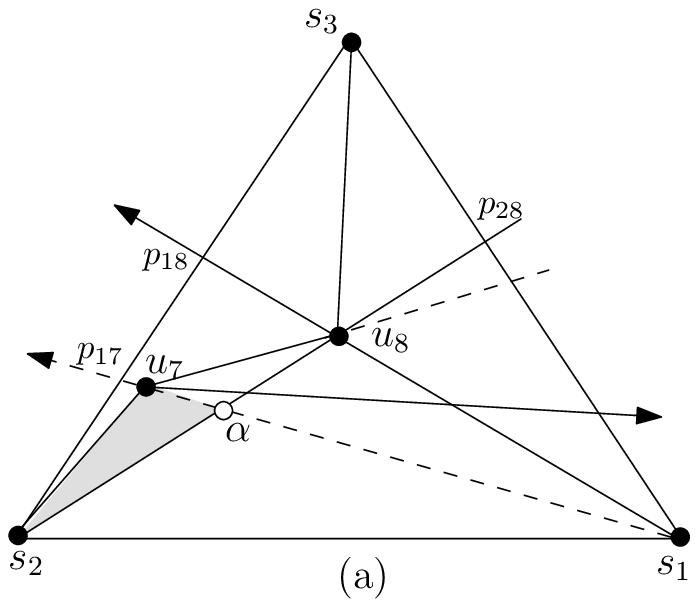}\\
\end{minipage}%
\begin{minipage}[c]{0.33\textwidth}
\centering
\includegraphics[width=1.75in]
    {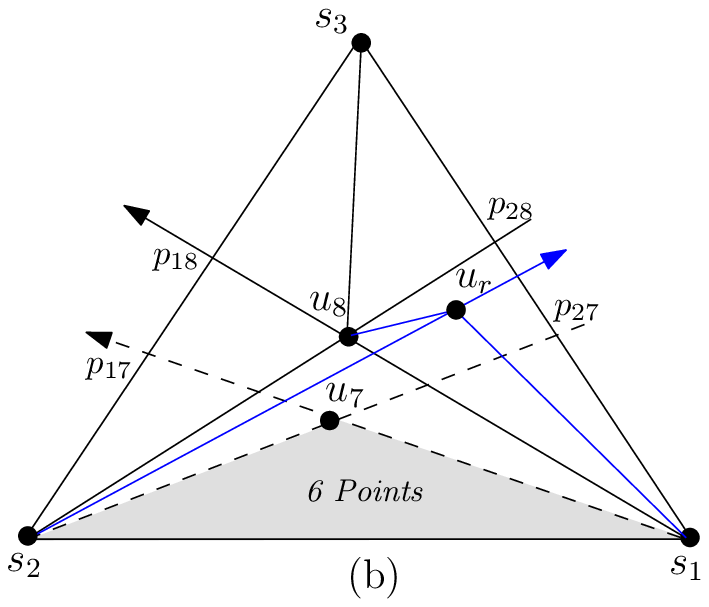}\\
\end{minipage}
\begin{minipage}[c]{0.33\textwidth}
\centering
\includegraphics[width=1.75in]
    {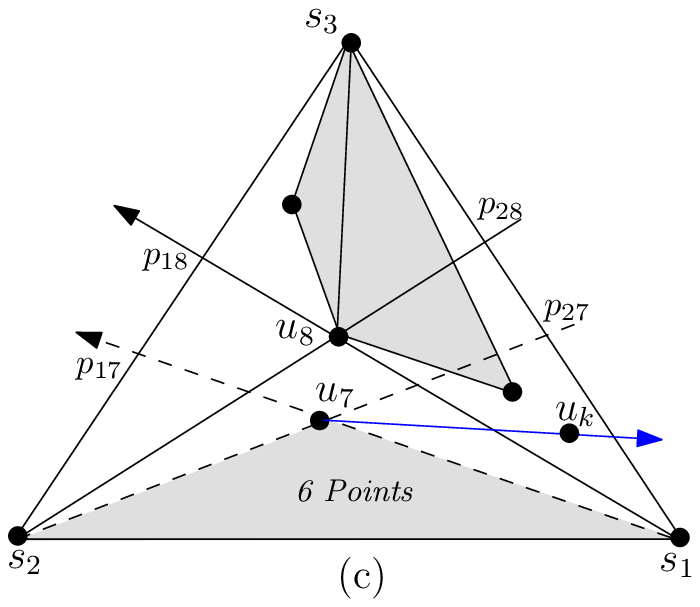}\\
\end{minipage}
\caption{(a) Proof of Observation \ref{ob:obeighthalways}, (b) Proof of Lemma \ref{lm:ch3empty}, and
  (c) Proof of Lemma \ref{lm:lmeightempty}.}
  \label{fig:thull_new}\vspace{-0.15in}
\end{figure*}

\begin{lemma}
If for some $i\ne j\ne k\in\{1, 2, 3\}$, $Cone(p_{jt}u_ts_i)\cap S$ is empty,
where $u_t$ is the seventh angular neighbor of $\overrightarrow{s_is_j}$ in
$Cone(s_js_is_k)$, then $S$ is admissible. \label{lm:ch3empty}
\end{lemma}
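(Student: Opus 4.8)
The plan is to fix notation and make the hypothesis explicit. Without loss of generality take $i=1$, $j=2$, $k=3$, so that $u_t=u_7$ is the seventh angular neighbor of $\overrightarrow{s_1s_2}$ in $Cone(s_2s_1s_3)$, and the hypothesis reads: $Cone(p_{27}u_7s_1)\cap S$ is empty, where $p_{27}$ is the intersection of $\overrightarrow{s_2u_7}$ with $\partial CH(S)$. The first step is to understand which points of $S$ lie in the cone $Cone(s_2s_1s_3)$ relative to the ray $\overrightarrow{s_1u_7}$: by definition $\bbI(s_1u_7s_2)\cap S$ contains exactly the six points $u_1,\dots,u_6$, while $u_8,\dots,u_{16}$ lie on the far side. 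I would then translate the emptiness of $Cone(p_{27}u_7s_1)$ into a structural statement saying that no point of $S$ lies in the wedge at $u_7$ between the ray $\overrightarrow{u_7p_{27}}$ (along $s_2u_7$ extended) and the ray toward $s_1$, which constrains where the points beyond $u_7$ can sit.

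\textbf{Producing the first $5$-hole.}
The natural target is a partition of $S$ by a line (or a pair of rays emanating from a vertex) into one part of size $9$ with convex hull not a triangle, and a complementary part containing at least $10$ points, so that Corollary \ref{corollary:nine_points} and $H(5)=10$ each yield a $5$-hole in the respective parts. Concretely, I expect to use the ray $\overrightarrow{s_1u_7}$ (or $\overrightarrow{s_2u_7}$) as the splitting line. Counting from the angular-neighbor definition, $\mathcal H_c(s_1u_7,s_2)\cap S$ has a controlled size, and the emptiness assumption forces the relevant boundary region to be vacant, which should let me certify that the smaller side has convex hull with at least four vertices. I would invoke Corollary \ref{corollary:nine_points} on that side and obtain the second $5$-hole on the opposite side (which carries at least $10$ points, hence contains a $5$-hole by Harborth's theorem), checking that the two convex hulls are separated by the splitting line and are therefore disjoint.

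\textbf{The main obstacle.}
The hard part will be guaranteeing that the nine-point side genuinely has a non-triangular convex hull: the emptiness of a single cone does not immediately certify four extreme points, so I anticipate needing a short case analysis on the position of $u_7$'s neighbors, much as in the proof of Observation \ref{ob:obeighthalways}, where the subcase $\bbI(s_1s_2u_ru_8)\cap S\subset\bbI(s_2\alpha u_r)$ forced $u_r=u_7$. If the hull of the candidate nine-point set degenerates to a triangle, I would instead peel a single vertex (showing it is $5$-redundant via Corollary \ref{cor:5redun}) and re-split, reducing to a configuration already handled. The emptiness hypothesis is precisely what rules out the troublesome placement, so the argument should terminate without an infinite regress.

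\textbf{Assembling the proof.}
Finally I would combine the two $5$-holes, verify disjointness from the fact that they lie in opposite open halfplanes determined by the splitting line through $s_1$ and $u_7$, and conclude that $S$ is admissible. Throughout, I expect to lean on the earlier small-point-set lemmas rather than any new combinatorial identity, so no lengthy computation should be required; the delicacy is entirely in the geometric bookkeeping of which cone is empty and how that pins down the convex-hull structure of the smaller side.
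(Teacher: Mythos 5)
There is a genuine gap, and it sits exactly where you flagged ``the main obstacle.'' Your central plan --- split along $\overrightarrow{s_1u_7}$ so that $\mathcal H_c(s_1u_7,s_2)\cap S$ is a 9-point set with at least four hull vertices --- is unavailable in the setting of Lemma \ref{lm:ch3empty}: Assumption \ref{assumption3} is in force here (the paper's proof invokes it in its second sentence), and it says precisely that $\bbI(u_7p_{17}s_2)\cap S$ is empty, which forces $CH(\mathcal H_c(s_1u_7,s_2)\cap S)$ to be exactly the triangle $s_1s_2u_7$. The case where that split yields a quadrilateral hull was disposed of before this lemma was ever stated. Worse, your fallback --- peel one 5-redundant vertex and re-split --- does not go through as described: a 9-point set with triangular hull need not contain any 5-hole at all (Theorem \ref{th:layer} exhibits the classes $L\{3,3,3\}$ and $L\{3,5,1\}$), and Corollary \ref{cor:5redun} is stated for quadrilateral hulls with at least six interior points, so there is no ready-made redundancy statement for the triangular configuration you would be left with. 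Note also that the hypothesis of the lemma, emptiness of $Cone(p_{27}u_7s_1)$ (apex $u_7$, bounded by the extension of $s_2u_7$ and the ray toward $s_1$), by itself certifies nothing about extra hull vertices on either side of $s_1u_7$ or $s_2u_7$; its actual role in the paper is different, as explained next.

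The paper's proof works one layer further out and contains a conversion step that your sketch has no counterpart of. It considers the 10-point set $\mathcal H_c(s_1u_8,s_2)\cap S$, uses Observation \ref{ob:obeighthalways} to assume $Cone(s_2u_8p_{18})\cap S$ is empty, and cases on $Cone(p_{28}u_8s_1)$: if it is empty, one of $s_1$, $s_2$, $u_8$ is 5-redundant in $\mathcal H_c(s_1u_8,s_2)\cap S$ and admissibility follows; if it is non-empty, then --- and this is where the lemma's hypothesis enters --- emptiness of $Cone(s_1u_7p_{27})$ forces $Cone(p_{27}s_2p_{28})\cap S$ to be non-empty, and one takes $u_r$, the first angular neighbor of $\overrightarrow{s_2u_7}$ in that cone. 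The residual difficulty is that every 5-hole in the 10-point set may use $s_1u_8$ as an edge; the paper then extends such a 5-hole to a (possibly non-empty) convex hexagon having $s_1$, $u_8$, $u_r$ as consecutive vertices and extracts from it a new 5-hole with edge $u_rs_1$, avoiding $u_8$ and lying in $\mathcal H(u_rs_1,s_2)$. Only then does the final split happen, and it is along $s_2u_r$ --- not along $s_1u_7$ or $s_2u_7$: the 5-hole in $(\mathcal H(s_2u_r,s_1)\cap S)\cup\{u_r\}$ is disjoint from the one in $(\mathcal H(s_2u_r,s_3)\cap S)\cup\{s_2\}$. Without the auxiliary point $u_r$ and this hexagon-extension argument, your outline cannot escape the triangular-hull case, and your claim that the peeling process ``should terminate without an infinite regress'' is unsupported.
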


\begin{proof}
W.l.o.g., let $i=1$ and $j=2$. This means $t=7$ and $Cone(s_1u_7p_{27})$ is
empty in $S$. From Assumption \ref{assumption3}, $\bbI( u_7p_{17}s_2)\cap S$ is
empty. Based on Observation \ref{ob:obeighthalways} we may suppose $Cone(s_2u_8p_{18})\cap
S$ is empty. Now, if $Cone(p_{28}u_8s_1)\cap S$ is empty, at least one of
$s_1$, $s_2$, or $u_8$ is 5-redundant in $\mathcal H_c(s_1u_8, s_2)\cap S$, and
admissibility of $S$ is immediate.

Therefore, assume that $Cone(p_{28}u_8s_1)\cap S$ is non-empty, which implies
that $Cone(p_{27}s_2p_{28})\cap S$ is non-empty, since $Cone(s_1u_7p_{27})\cap
S$ is assumed to be empty. Let $u_r$ be the first angular neighbor of
$\overrightarrow{s_2u_7}$ in $Cone(p_{27}s_2p_{28})\cap S$ (see Figure
\ref{fig:thull_new}(b)). Now, $S$ is admissible unless there exists a 5-hole in
$\mathcal H_c(s_1u_8, s_2)\cap S$ with $s_1u_8$ as an edge. Observe that this
5-hole cannot have $s_2$ as a vertex. Moreover, the remaining three vertices of
this 5-hole, that is, the vertices apart from $s_1$ and $u_8$, lie in the
halfplane $\mathcal H(u_rs_2, s_1)$. Now, this 5-hole can be extended to a
convex hexagon having $s_1$, $u_8$, and $u_r$ as three consecutive vertices. Note that this convex
hexagon may not be empty, and it does not contain $s_2$ as a
vertex. From this convex hexagon, we can get a 5-hole with $u_rs_1$ as an edge,
which does not contain $u_8$ as a vertex and which lies in the halfplane
$\mathcal H(u_rs_1, s_2)$. Hence,  $(\mathcal H(s_2u_r, s_1)\cap S)\cup\{u_r\}$
contains a 5-hole which is disjoint from the 5-hole contained in $(\mathcal
H(s_2u_r, s_3)\cap S)\cup\{s_2\}$. \hfill $\Box$
\end{proof}

Hereafter, in light of the previous lemma, let us assume

\begin{assumption}
For all $i\ne j\ne k\in\{1, 2, 3\}$, $Cone(p_{jt}u_ts_i)\cap S$ is non-empty,
where $u_t$ is the seventh angular neighbor of $\overrightarrow{s_is_j}$ in
$Cone(s_js_is_k)$. \label{assumption4}
\end{assumption}

With this assumption we have the following two lemmas.

\begin{lemma}If for some $i\ne j\ne k\in\{1, 2, 3\}$, $Cone(s_ku_ts_j)\cap S$ is non-empty, where
$u_t$ is the eighth angular neighbor of $\overrightarrow{s_is_j}$ in
$Cone(s_js_is_k)\cap S$, then $S$ is admissible. \label{lm:lmeightempty}
\end{lemma}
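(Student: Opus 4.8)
The plan is to reduce the whole configuration to a single line-split of $S$ and then feed each side to the small-point existence results. Without loss of generality I would take $i=1$, $j=2$, $k=3$, so that $u_t=u_8$ is the eighth angular neighbor of $\overrightarrow{s_1s_2}$ in $Cone(s_2s_1s_3)$ and the ray $\overrightarrow{s_1u_8}$ meets the base $s_2s_3$ at $p_{18}$. The line $s_1u_8$ then splits $S$ into the $10$-point set $\mathcal H_c(s_1u_8, s_2)\cap S=\{s_1, s_2, u_1, \ldots, u_8\}$ lying in the closed $s_2$-halfplane, and the $9$-point set $\mathcal H(s_1u_8, s_3)\cap S=\{s_3, u_9, \ldots, u_{16}\}$ lying strictly in the open $s_3$-halfplane.

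First I would pin down the cone point. The wedge $Cone(s_3u_8s_2)$ meets the triangle $s_1s_2s_3$ exactly in $\bbI(u_8s_2s_3)$, and the line $s_1u_8$ cuts this region into $\bbI(u_8s_2p_{18})$ and $\bbI(u_8p_{18}s_3)$. If $\bbI(u_8s_2p_{18})\cap S=Cone(p_{18}u_8s_2)\cap S$ is non-empty, then $S$ is admissible by Observation \ref{ob:obeighthalways}, so I may assume this sub-triangle is empty. The standing hypothesis $Cone(s_3u_8s_2)\cap S\neq\emptyset$ then forces a point $w\in\bbI(u_8p_{18}s_3)\cap S$, and since $w$ lies strictly on the $s_3$-side of $s_1u_8$ we have $w\in\{u_9, \ldots, u_{16}\}$.

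The main step is to exhibit two $5$-holes separated by the line $s_1u_8$. The $10$-point set $\mathcal H_c(s_1u_8, s_2)\cap S$ always contains a $5$-hole, since $H(5)=10$, and this hole sits in the closed $s_2$-halfplane. On the other side I would try to show that the $9$-point set $T'=\{s_3, u_9, \ldots, u_{16}\}$ satisfies $|\bbV(CH(T'))|\geq 4$; then Corollary \ref{corollary:nine_points} produces a $5$-hole in $T'$, which lies strictly in the open $s_3$-halfplane and is therefore disjoint from the first. Because $u_8$ lies on the dividing line, its assignment is free, so the same conclusion follows if instead the complementary $9$-point set $\{s_1, s_2, u_1, \ldots, u_7\}$ has at least four hull vertices, pairing it with the $10$-point set $\{u_8, s_3, u_9, \ldots, u_{16}\}$.

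The hard part will be the degenerate situation in which both candidate $9$-point sides have triangular convex hulls, so that Corollary \ref{corollary:nine_points} does not apply to either. Here I would exploit the extra point $w$, which lies low in $\bbI(u_8p_{18}s_3)$, together with $u_8$, to manufacture a convex hexagon (or a pentagon with interior points) on the $s_3$-side. Following the template of Lemma \ref{lm:ch3empty}, I would take a suitable first angular neighbor of $\overrightarrow{u_8w}$, extend a $5$-hole to a $6$-hole having $u_8$ (and $w$) as consecutive vertices, and then re-extract a $5$-hole that omits the separating vertex; Corollary \ref{cor:5redun} both guarantees this and certifies the omitted vertex as $5$-redundant, so the resulting hole stays strictly on the $s_3$-side and is disjoint from a $5$-hole among the remaining points on the $s_2$-side. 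The only delicate point is to verify that this re-extraction is always available under Assumption \ref{assumption4} and the standing emptiness of $\bbI(u_8s_2p_{18})$; everything else is routine bookkeeping with the $10/9$ split.
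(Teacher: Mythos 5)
Your opening reductions are exactly the paper's: with $i=1$, $j=2$, $t=8$, Observation \ref{ob:obeighthalways} lets you assume $\bbI(s_2u_8p_{18})\cap S$ is empty, so the hypothesis places a point $w\in\bbI(u_8p_{18}s_3)\cap S$. But from there your argument has a genuine gap, and it sits precisely where all the content of the lemma lies. You keep the separating line $s_1u_8$ and concede that neither $\{s_3,u_9,\ldots,u_{16}\}$ nor $\{s_1,s_2,u_1,\ldots,u_7\}$ need have four hull vertices; in that degenerate case you propose to ``manufacture'' a hexagon following the template of Lemma \ref{lm:ch3empty} and assert that Corollary \ref{cor:5redun} ``both guarantees this and certifies the omitted vertex as 5-redundant.'' Corollary \ref{cor:5redun} guarantees no such thing: it is a statement about point sets whose hull is a quadrilateral with at least six interior points, and it says nothing about re-extracting a 5-hole that avoids a prescribed vertex from a (possibly non-empty) hexagon. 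Moreover, the hexagon-rotation trick of Lemma \ref{lm:ch3empty} was carried out under the hypothesis that $Cone(s_1u_7p_{27})\cap S$ is \emph{empty} --- exactly the negation of Assumption \ref{assumption4}, which is in force in the present lemma --- so the template does not transfer, and your own closing sentence (``the only delicate point is to verify that this re-extraction is always available'') defers the entire crux rather than proving it.

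The paper closes this case by changing the separating line, not by hexagon surgery. It pivots to the \emph{seventh} angular neighbor $u_7$: Assumption \ref{assumption4} makes $Cone(s_1u_7p_{27})\cap S$ non-empty, so one takes $u_k$, the first angular neighbor of $\overrightarrow{u_7s_1}$ in $Cone(s_1u_7p_{27})$, and splits $S$ along the line $u_7u_k$. If $Cone(u_ku_7p_{27})\cap S$ is empty, Corollary \ref{cor:5redun} shows $s_2$ is 5-redundant in the 10-point set $\mathcal H_c(u_7u_k,s_2)\cap S$ and admissibility follows; otherwise a second point $u_m\ne u_k$ in $Cone(u_ku_7p_{27})\cap S$ forces $|\bbV(CH(\mathcal H(u_7u_k,s_3)\cap S))|\geq 4$, so Corollary \ref{corollary:nine_points} yields a 5-hole on the 9-point side disjoint from the 5-hole in $\mathcal H_c(u_7u_k,s_2)\cap S$. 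This dichotomy is the mechanism your sketch is missing: it converts Assumption \ref{assumption4} into either a redundant vertex or a fourth hull vertex on the 9-point side of a well-chosen line, whereas your plan never extracts anything concrete from Assumption \ref{assumption4} and leaves the triangular-hull case unproved.
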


\begin{proof}It suffices to prove the result for $i=1$ and $j=2$, which means $t=8$. Refer to Figure \ref{fig:thull_new}(c). Based on Observation \ref{ob:obeighthalways} we may suppose $S$ is admissible whenever
$\bbI(  s_2u_{8}p_{18})\cap S$ is non-empty. Therefore, assume that $\bbI(
s_2u_{8}p_{18})\cap S$ is empty. Now, suppose $\bbI( u_{8}s_{3}p_{18})\cap S$
is non-empty, and let $ \bbI( u_{8}s_{3}p_{18})\cap S$. Let $u_k$ be the
first angular neighbor of $\overrightarrow{u_7s_1}$ in $Cone(s_1u_7p_{27})$,
which is non-empty by Assumption \ref{assumption4}.
If $Cone(u_ku_7p_{27})$ is empty, from Corollary \ref{cor:5redun}, $s_2$ is
5-redundant in $\mathcal H_c(u_7u_k, s_2)\cap S$ and the admissibility of $S$
follows. Thus, there exists some point $u_m$ $(m\ne k)$ in
$Cone(u_ku_7p_{27})\cap S$. Therefore, $|\bbV(CH((\mathcal H(u_7u_k, s_3)\cap
S)))|\geq 4$, and by Corollary \ref{corollary:nine_points}, $\mathcal H(u_7u_k,
s_3)\cap S$ contains a 5-hole. This 5-hole is disjoint from the 5-hole
contained in $\mathcal H_c(u_7u_k, s_2)\cap S$.
\hfill $\Box$
\end{proof}

\begin{lemma}If for some $i\ne j\ne k\in\{1, 2, 3\}$, $Cone(s_ku_ts_j)\cap S$ is non-empty, where
$u_t$ is the seventh angular neighbor of $\overrightarrow{s_is_j}$ in
$Cone(s_js_is_k)$, then $S$ is admissible. \label{lm:lmsevenempty}
\end{lemma}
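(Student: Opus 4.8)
The plan is to mirror the structure of the preceding lemmas concerning the seventh and eighth angular neighbors, now working with the seventh angular neighbor under the standing hypotheses built up by Assumptions \ref{assumption3} and \ref{assumption4}. Without loss of generality take $i=1$ and $j=2$, so $t=7$ and $u_7$ is the seventh angular neighbor of $\overrightarrow{s_1s_2}$ in $Cone(s_2s_1s_3)$, with $Cone(s_3u_7s_2)\cap S$ assumed non-empty. From Assumption \ref{assumption4}, $Cone(p_{27}u_7s_1)\cap S$ is non-empty, so there is a point of $S$ in the cone on the $s_1$-side; combined with the non-emptiness of $Cone(s_3u_7s_2)$, the point $u_7$ has witnesses on both relevant sides. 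The idea is to use these witnesses to force a convex-position configuration from which Corollary \ref{corollary:nine_points} or the $5$-redundancy machinery of Corollary \ref{cor:5redun} produces two disjoint $5$-holes.

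The key steps I would carry out are as follows. First I would set $T=\mathcal H_c(s_1u_7, s_2)\cap S$ and count: since $u_7$ is the seventh angular neighbor, $T$ contains $s_1$, $s_2$, and exactly the first six interior points swept, so $|T|=9$ (together with $u_7$ itself, depending on how the boundary is assigned). I would then argue that $|\bbV(CH(T))|\geq 4$ using the non-emptiness of $Cone(p_{27}u_7s_1)\cap S$, which places an extra hull vertex beyond the triangle $s_1s_2u_7$; invoking Corollary \ref{corollary:nine_points} this yields a $5$-hole in $T$. Second, on the complementary side I would use $Cone(s_3u_7s_2)\cap S\neq\emptyset$ to guarantee a disjoint $5$-hole in $\mathcal H(s_1u_7, s_3)\cap S$ (or a slight augmentation of it), exactly as in the disjointness arguments of Observation \ref{ob:obeighthalways} and Lemma \ref{lm:lmeightempty}. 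The remaining work is the degenerate subcase where $|\bbV(CH(T))|=3$, i.e.\ where all nine points of $T$ lie with a triangular hull $s_1s_2u_7$; here I would fall back on the redundancy analysis, choosing an appropriate first angular neighbor $u_k$ of $\overrightarrow{u_7s_1}$ (as in the proof of Lemma \ref{lm:lmeightempty}) and splitting along the line $u_7u_k$ so that one side has $\geq 4$ hull vertices and nine points, again applying Corollary \ref{corollary:nine_points} and checking $5$-redundancy of $s_2$ or $u_7$ via Corollary \ref{cor:5redun}.

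The main obstacle I expect is the careful bookkeeping in the triangular-hull subcase: when $CH(T)$ is just $s_1s_2u_7$, producing a $5$-hole that genuinely avoids the region needed for the second $5$-hole requires controlling where the swept interior points sit relative to $u_7u_k$, and showing that the non-emptiness hypothesis $Cone(s_3u_7s_2)\cap S\neq\emptyset$ survives the removal of the points used by the first $5$-hole. In particular I must verify that the splitting line does not strand too few points on the $s_3$-side, so that the second region still contains at least the $10$ points (or a $9$-point set with $\geq 4$ hull vertices) needed to guarantee a disjoint $5$-hole. I would handle this by a subsidiary case analysis on whether $Cone(u_ku_7p_{27})\cap S$ is empty, precisely paralleling the final paragraph of Lemma \ref{lm:lmeightempty}, since that lemma already establishes the template for extracting disjoint $5$-holes from such a split.
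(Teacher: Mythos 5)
Your main line collapses at its first step, for a geometric reason. The set $T=\mathcal H_c(s_1u_7,s_2)\cap S$ does have nine points, but the witness supplied by Assumption \ref{assumption4} lies on the wrong side of the line $s_1u_7$: since $p_{27}$ is on the edge $s_1s_3$, the region $Cone(p_{27}u_7s_1)$ meets the triangle in $\bbI(u_7s_1p_{27})$, which is contained in $\mathcal H(s_1u_7,s_3)$, so its points contribute nothing to $CH(T)$. Worse, Assumption \ref{assumption3} makes $\bbI(u_7p_{17}s_2)\cap S$ empty, so the six points preceding $u_7$ in the sweep all lie in $\bbI(s_1s_2u_7)$ and $CH(T)$ is \emph{exactly} the triangle $s_1s_2u_7$. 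Hence $|\bbV(CH(T))|=3$ always: the case you treat as generic never occurs, Corollary \ref{corollary:nine_points} can never be applied to $T$, and the entire proof is the subcase you defer. (Also, your ``second step'' needs no hypothesis: $\mathcal H(s_1u_7,s_3)\cap S$ has ten points and contains a 5-hole simply because $H(5)=10$; the hypothesis $Cone(s_3u_7s_2)\cap S\neq\emptyset$, which via Assumption \ref{assumption3} yields a point $u_a\in\bbI(u_7p_{17}s_3)$, is needed for a quite different purpose.)

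The fallback you sketch is not a routine transcription of Lemma \ref{lm:lmeightempty} and, as stated, does not close the gap. Splitting along $u_7u_k$ places \emph{ten} points ($s_1$, $s_2$, the six early points, $u_7$, $u_k$) on the $s_2$-side and nine on the $s_3$-side, and nothing in your proposal produces a fourth hull vertex for that nine-point side: in Lemma \ref{lm:lmeightempty} the fourth vertex came from the eighth-neighbor hypothesis point located in $\bbI(u_8s_3p_{18})$, which has no analogue here ($u_a$ sits in $\bbI(u_7p_{17}s_3)$, and a single point of $Cone(u_ku_7p_{27})$ is not enough). The paper's proof supplies two ingredients your proposal lacks. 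First, it re-roots the angular order at $s_2$: if $u_7$ is the eighth or ninth angular neighbor of $\overrightarrow{s_2s_1}$ in $Cone(s_1s_2s_3)$, then Lemma \ref{lm:lmeightempty} itself applies with the roles permuted, the hypothesis being witnessed precisely by $u_k\in\bbI(u_7s_3s_1)$. Second, once those two positions are excluded, a count shows $Cone(p_{27}u_7u_k)\cap S$ contains at least \emph{two} points $u_m,u_n$, and the split is taken along $u_7u_m$ (not $u_7u_k$), so that $(\mathcal H(u_7u_m,s_1)\cap S)\cup\{u_m\}$ has ten points while $(\mathcal H(u_7u_m,s_3)\cap S)\cup\{u_7\}$ has nine points whose hull has at least four vertices (using $u_n$, $u_a$, $s_3$), after a preliminary dichotomy via Corollary \ref{cor:5redun} on the quadrilateral $u_7s_2s_1u_k$. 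Without the eighth/ninth-neighbor reduction and the two-point count, the triangular-hull situation — which, as noted above, is the only situation — cannot be finished by the template you cite.
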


\begin{proof}W.l.o.g., let $i=1$ and $j=2$, which means $t=7$.
From Assumption \ref{assumption3}, $\bbI( u_7p_{17}s_2)\cap S$ is empty. Next,
suppose there exists a point $u_a$ in  $\bbI( u_7p_{17}s_{3})\cap S$. Refer to
Figure \ref{fig:thull}(a). Since $Cone(s_1u_7p_{27})\cap S$ is non-empty by
Assumption \ref{assumption4}, let $u_k$ be the first angular neighbor of
$\overrightarrow{u_7s_1}$ in $Cone(s_1u_7p_{27})$ and $\alpha$ the point of
intersection of the diagonals of the convex quadrilateral $u_7s_2s_1u_k$. From
Corollary \ref{cor:5redun}, it is easy to see that $S$ is admissible unless
$\bbI(s_1s_2u_7u_k)\cap S\subset \bbI(s_1s_2\alpha)$.
Now, if $u_7$ is the eighth angular neighbor of $\overrightarrow{s_2s_1}$ or
$\overrightarrow{s_2s_3}$ in $Cone(s_1s_2s_{3})$, then $S$ is admissible from
Lemma \ref{lm:lmeightempty}, since $\bbI( u_7s_3s_{1})\cap S$ is not empty.
Since the eighth angular neighbor of $\overrightarrow{s_2s_3}$ in $Cone(s_1s_2s_{3})$ is the ninth angular
neighbor of $\overrightarrow{s_2s_1}$ in $Cone(s_1s_2s_{3})$, $u_7$ cannot be the eighth or ninth angular neighbor
$\overrightarrow{s_2s_1}$ in $Cone(s_1s_2s_{3})$. Thus there exist at least
two points, $u_m$ and $u_n$ in $Cone(p_{27}u_7u_k)\cap S$, where $u_m$ is the first
angular neighbor of $\overrightarrow{u_7u_k}$ in $Cone(p_{27}u_7u_k)$. Then,
the 5-hole contained in $(\mathcal H(u_7u_m, s_1)\cap S)\cup\{u_m\}$ is
disjoint from the 5-hole contained in $(\mathcal H(u_7u_m, s_3)\cap
S)\cup\{u_7\}$, since $|\bbV(CH((\mathcal H(u_7u_m, s_3)\cap S)\cup\{u_7\}))|\geq 4$
(see Figure \ref{fig:thull}(a)).
\hfill $\Box$
\end{proof}

\begin{figure*}[h]
\centering
\begin{minipage}[c]{0.33\textwidth}
\centering
\includegraphics[width=1.75in]
    {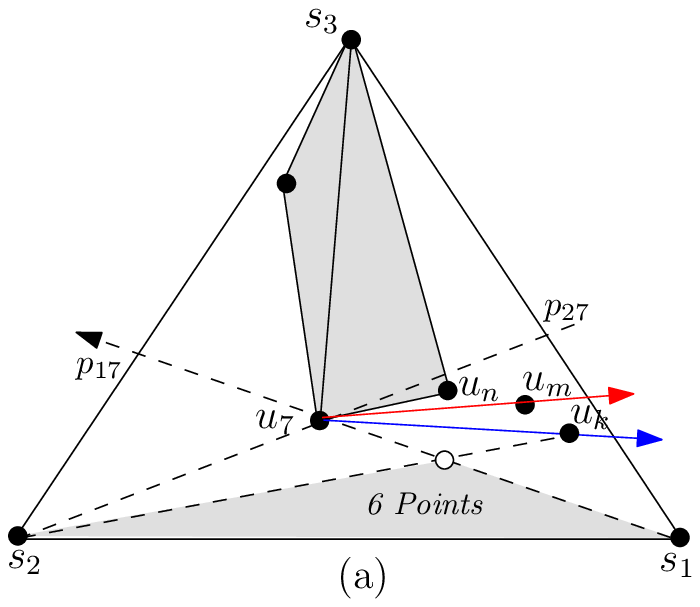}\\
\end{minipage}%
\begin{minipage}[c]{0.33\textwidth}
\centering
\includegraphics[width=1.75in]
    {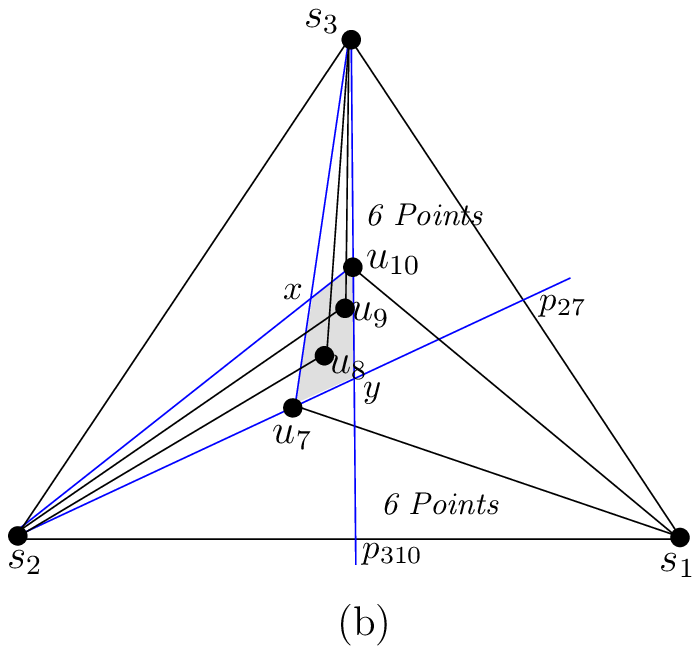}\\
\end{minipage}
\begin{minipage}[c]{0.33\textwidth}
\centering
\includegraphics[width=1.75in]
    {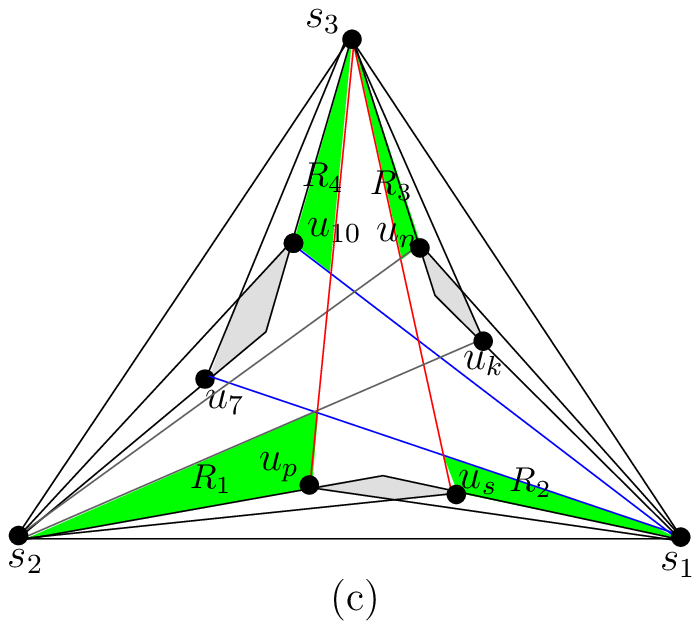}\\
\end{minipage}
\caption{(a) Illustration for the proof of Lemma \ref{lm:lmsevenempty}, (b) Diamond arrangement $D\{u_7, u_{10}\}$, (c) Arrangement of diamonds $D\{u_7,u_{10}\}$, $D\{u_k,u_n\}$, and $D\{u_p,u_s\}$ in $\bbI( s_1s_2s_3)$.}
  \label{fig:thull}\vspace{-0.15in}
\end{figure*}

The following lemma proves the admissibility of $S$ in the remaining cases.

\begin{lemma}If for all $i\ne j\ne k\in\{1, 2, 3\}$, $Cone(s_ku_{\alpha}s_j)\cap S$ and $Cone(s_ku_{\beta}s_j)\cap S$ are empty, where
$u_{\alpha}$, $u_{\beta}$ are the seventh and eighth angular neighbors of
$\overrightarrow{s_is_j}$ in $Cone(s_js_is_k)$, respectively, then $S$ is
admissible. \label{lm:lmseveneight}
\end{lemma}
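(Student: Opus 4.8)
The plan is to dispatch the single configuration that survives every earlier reduction: for each ordered triple $i\ne j\ne k$ the cones $Cone(s_ku_\alpha s_j)\cap S$ and $Cone(s_ku_\beta s_j)\cap S$ are empty, where $u_\alpha,u_\beta$ are the seventh and eighth angular neighbors of $\overrightarrow{s_is_j}$ in $Cone(s_js_is_k)$. First I would make these conditions concrete at one vertex. Fixing $s_1$, the seventh and eighth angular neighbors of $\overrightarrow{s_1s_2}$ are $u_7$ and $u_8$, while those of $\overrightarrow{s_1s_3}$ are $u_{10}$ and $u_9$, since sweeping inward from the edge $s_1s_3$ the interior points occur in the reverse order $u_{16},u_{15},\ldots$ So the four ``central'' points seen from $s_1$ are exactly $u_7,u_8,u_9,u_{10}$, and the surviving hypothesis states that the far--side cones $Cone(s_3u_7s_2)$, $Cone(s_3u_8s_2)$, $Cone(s_2u_9s_3)$ and $Cone(s_2u_{10}s_3)$ are all empty in $S$. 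Combined with Assumption \ref{assumption3}, these emptiness relations confine the configuration into a quadrilateral region bounded by the cone rays through $u_7$ and $u_{10}$, which I would name the \emph{diamond} $D\{u_7,u_{10}\}$ (Figure \ref{fig:thull}(b)); its defining property is that no point of $S$ lies strictly on the $s_2s_3$--side of the rays $u_7s_2,u_7s_3,u_{10}s_2,u_{10}s_3$, so the whole point set is pinned between $s_1$ and this diamond.

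Next I would repeat the construction verbatim at $s_2$ and at $s_3$, producing diamonds $D\{u_k,u_n\}$ and $D\{u_p,u_s\}$, and then study their joint arrangement inside $\bbI(s_1s_2s_3)$ as depicted in Figure \ref{fig:thull}(c). Because each diamond is ``central'' for its own vertex, the three of them must mutually overlap around the interior of the triangle, and matching the angular labellings forces arithmetic relations between the indices: the point that plays the role of the seventh neighbor from $s_1$ is constrained relative to the seventh neighbors seen from $s_2$ and $s_3$. The key bookkeeping step is to track how the $16$ interior points distribute among the three diamonds and the six wedges lying angularly before the seventh neighbor or after the tenth neighbor at each vertex. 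I expect the emptiness of all the far--side cones, together with the non-emptiness guaranteed by Assumption \ref{assumption4}, to leave only finitely many combinatorial types of distribution.

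In each surviving type the plan is to exhibit a dividing ray or diagonal (one of the segments $u_7u_{10}$, $u_ku_n$, $u_pu_s$, or a ray $\overrightarrow{s_iu_t}$ through a central point) that splits $S$ into two parts, each lying in a closed halfplane, containing at least $9$ points and having a non-triangular convex hull; then Corollary \ref{corollary:nine_points} supplies a $5$-hole in each part, and Lemma \ref{lm:lm2} handles the residual cases where one part is a pentagon with at least two interior points. Disjointness is then automatic from the splitting line, giving two disjoint $5$-holes. Any distribution for which no such splitting line exists should contradict the forced index relations with only $16$ interior points available, which again establishes admissibility since every non-admissible branch has already been excluded by Observation \ref{ob:obeighthalways} and Lemmas \ref{lm:ch3empty}, \ref{lm:lmeightempty}, and \ref{lm:lmsevenempty}. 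The hard part will be the simultaneous three--vertex angular bookkeeping: proving that all three diamonds are well defined, that their overlap pattern is forced, and that the resulting case analysis of point distributions is genuinely exhaustive while always yielding either a valid splitting line or a counting contradiction.
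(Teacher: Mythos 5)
Your opening step coincides with the paper's: the emptiness of the four cones (via Lemmas \ref{lm:lmeightempty} and \ref{lm:lmsevenempty}) forces $u_7, u_8, u_9, u_{10}$ into the diamond $D\{u_7,u_{10}\}$, and the construction is then repeated at the other two vertices. But from there the proposal contains a geometric error and stops short of an argument. You assert that the three diamonds ``must mutually overlap around the interior of the triangle''; the opposite is true, and their \emph{disjointness} is the engine of the proof. Since $Cone(s_1u_7p_{27})\cap S$ is non-empty by Assumption \ref{assumption4}, $u_7$ cannot be the seventh through tenth angular neighbor of $\overrightarrow{s_2s_1}$, and after disposing of the case where the seventh neighbor $u_k$ lies in $\bbI(u_7s_2s_1)$ (there $|\bbI(s_1u_kp_{2k})\cap S|\geq 1$ because $|\bbI(u_7s_1s_2)\cap S|=6$, so $|\bbV(CH(\mathcal H_c(s_2u_k,s_1)\cap S))|\geq 4$ and Corollary \ref{corollary:nine_points} applies directly --- a case split your sketch never makes), the seventh through tenth neighbors of $\overrightarrow{s_2s_1}$ all lie in $Cone(s_1u_7p_{27})$, beyond $u_7$ toward $s_1$, and form a diamond $D\{u_k,u_n\}$ disjoint from $D\{u_7,u_{10}\}$; likewise $D\{u_p,u_s\}$, built at $s_3$ inside $Cone(s_1s_3u_{10})$, is disjoint from both. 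Three pairwise disjoint diamonds account for exactly $12$ of the $16$ interior points, which is what makes the residual bookkeeping finite; your overlap picture would break this count and make the index relations you hope to exploit inconsistent.

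The second gap is that your endgame is only a hope: you ``expect'' finitely many distribution types and say each ``should'' admit a splitting line or a counting contradiction, but none of this is carried out, and it is precisely the crux. The paper's closing move is short and concrete. With the three diamonds placed, only four interior points remain, distributed among the regions $R_1,\ldots,R_4$ of Figure \ref{fig:thull}(c). If $|R_1\cap S|\geq 1$, take $u_z$ to be the first angular neighbor of $\overrightarrow{u_ps_3}$ in $Cone(p_{2p}u_ps_3)$; then $|\mathcal H_c(u_pu_z,s_3)\cap S|=10$ and $\bbI(s_2u_zu_p)\cap S$ is non-empty, so $u_p$ is 5-redundant in $\mathcal H_c(u_pu_z,s_3)\cap S$ and the line through $u_p$ and $u_z$ separates two disjoint 5-holes. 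If $|R_1\cap S|=0$, then $|R_4\cap S|=2$ (because $|\bbI(s_2s_3u_p)\cap S|=6$), and the symmetric argument with the nearest angular neighbor of $\overrightarrow{u_{10}s_1}$ in $Cone(s_1u_{10}p_{310})$ finishes. Until you replace the expectation with an argument of this kind --- and correct the overlap claim on which your bookkeeping was to rest --- the proposal is a plausible outline of the paper's strategy, not a proof.
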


\begin{proof}Lemmas \ref{lm:lmeightempty} and \ref{lm:lmsevenempty} imply that $S$
is admissible unless the interiors of $ s_2u_7s_{3}$, $ s_2u_8s_{3}$, $
s_2u_9s_{3}$, and $ s_2u_{10}s_{3}$ are empty in $S$. Thus, points $u_7, u_{8},
u_{9}, u_{10}$ must be arranged inside $CH(S)$ as shown in Figure
\ref{fig:thull}(b). We call such a set of 4 points a {\it diamond} and denote
it by  $D\{u_7, u_{10}\}$. Note that, $|\bbI( s_1s_2u_7)\cap S|=|\bbI(
s_1s_3u_{10})\cap S|=6$.

Since $Cone(s_1u_7p_{27})\cap S$ is non-empty by Assumption \ref{assumption4},
$u_7$ cannot be the seventh, eighth, ninth, and tenth angular neighbors of $\overrightarrow{s_2s_1}$ in
$Cone(s_1s_2s_3)$. Let $u_k$ be the seventh angular neighbor of
$\overrightarrow{s_2s_1}$ in $Cone(s_1s_2s_3)$. Suppose that $u_k\in \bbI(u_7s_2s_1)$. Then
we have $|\bbI(s_1u_kp_{2k})\cap S|\geq 1$, as $|\bbI(u_7s_1s_2)\cap S|=6$. Hence,
$|\bbV(CH(\mathcal H_c(s_2u_k, s_1)\cap S))|\geq 4$, and since $|\mathcal H_c(s_2u_k, s_1)\cap S|=9$,
the admissibility of $S$, in this case, follows from Corollary \ref{corollary:nine_points}.

Therefore, it can be assumed that the seventh angular neighbor of $\overrightarrow{s_2s_1}$, that is,
$u_k$ lies in $\bbI(p_{27}u_7s_1)\cap S$. Then Lemmas \ref{lm:lmeightempty} and \ref{lm:lmsevenempty}
imply that the eighth, ninth, and tenth angular neighbors of $\overrightarrow{s_2s_1}$ are in
$Cone(s_1u_7p_{27})$. Let $u_l$, $u_m$, and $u_n$ denote the eighth, ninth, and
tenth angular neighbors of $\overrightarrow{s_2s_1}$ in $Cone(s_1s_2s_{3})$,
respectively. From similar arguments as before, these three points along with
the point $u_k$ form a diamond, $D\{u_k, u_n\}$, which is disjoint from
diamond $D\{u_7, u_{10}\}$ (see Figure \ref{fig:thull}(c)).

Let  $u_s$ be the seventh angular neighbor of  $\overrightarrow{s_3s_1}$ in
$Cone(s_1s_3u_{10})$ as shown in Figure \ref{fig:thull}(c). Again, Assumption
\ref{assumption4} and the same logic as before implies $S$ is admissible if
$u_{10}$ is the eighth, ninth or tenth angular neighbor of
$\overrightarrow{s_3s_1}$ in $Cone(s_1s_3u_{10})$. Let $u_r$, $u_q$, and $u_p$
be the eighth, ninth, and tenth angular neighbors of $\overrightarrow{s_3s_1}$
in $Cone(s_1s_3u_{10})$, respectively. As before, these three points along with
the point $u_s$, form another diamond $D\{u_p, u_s\}$, which disjoint from both
$D\{u_7, u_{10}\}$ and $D\{u_k, u_n\}$.

Let $R_1, R_2, R_3, R_4$ be the shaded regions inside $CH(S)$, as shown in Figure \ref{fig:thull}(c).
To begin with suppose that $|R_1\cap S|\geq 1$. Let $u_z$ be the first angular neighbor of $\overrightarrow{u_ps_3}$ in $Cone(p_{2p}u_ps_3)$. Note that $|\mathcal H_c(u_pu_z, s_3)\cap S|=10$ and $\bbI(s_2u_zu_p)\cap S$ is non-empty, as $|R_1\cap S|\geq 1$. This implies that $u_p$ is 5-redundant in $\mathcal H_c(u_pu_z, s_3)\cap S$. Therefore, the 5-hole contained in $(\mathcal H(u_pu_z, s_3)\cap S)\cup\{u_z\}$ is disjoint from the 5-hole contained in $(\mathcal H(u_pu_z, s_1)\cap S)\cap\{u_p\}$.  Therefore, assume that $|R_1\cap S|=0$. This implies that $|R_4\cap S|=2$, as $|\bbI(s_2s_3u_p)\cap S|=6$. The admissibility of $S$ now follows from exactly similar arguments by taking the nearest angular neighbor of $\overrightarrow{u_{10}s_1}$ in $Cone(s_1u_{10}p_{310})$. \hfill $\Box$
\end{proof}

Since all the different cases have been considered, the proof of the case
$|\bbV(CH(S))|=3$, and hence the theorem is finally completed.

\section{Proof of Theorem \ref{th:twomplusnine}}
\label{proof2m+9}

Let $S$ be any set of $2m+9$ points in the plane in general position, and
$u_1$, $u_2$, and $w_m$ be vertices of $CH(S)$ such that $u_1u_2$ and
$u_1w_m$ are edges of $CH(S)$. We label the points in the set $S$
inductively as follows.

\begin{description}
\item[{\it (i)}]{$u_i$ be the $(i-2)$-th angular neighbor of $\overrightarrow{u_1u_2}$ in $Cone(w_mu_1u_2)$, where $i\in\{3, 4, \ldots, m\}$.}

\item[{\it (ii)}]{$v_i$ be the $i$-th angular neighbor of $\overrightarrow{u_1u_m}$ in $Cone(w_mu_1u_m)$, where $i\in\{1, 2, \ldots, 9\}$.}

\item[{\it (iii)}]{$w_i$ be the $i$-th angular neighbor of $\overrightarrow{u_1v_9}$ in $Cone(w_mu_1v_9)$, where $i\in\{1, 2, \ldots, m\}$.}
\end{description}

Therefore, $S=U\cup V \cup W$, where $U=\{u_1,u_2, \dots,u_m\}$, $ V=\{v_1,v_2,
\dots,v_9\}$, and $W=\{w_1,w_2, \dots,w_m\}$.

A disjoint convex partition of $S$ into three subsets $S_1, S_2, S_3$ is said
to be a {\it separable} partition of $S$ (or {\it separable} for $S$) if
$|S_1|=|S_3|=m$ and the set of 9 points $S_2$ contains a 5-hole. The set $S$ is
said to be {\it separable} if there exists a partition which is separable for
$S$. For proving Theorem \ref{th:twomplusnine} we have to identify a separable
partition for every set of $2m+9$ points in the plane in general position. It
is clear, from Corollary \ref{corollary:nine_points}, that $S$ is separable
whenever $|\bbV(CH(V))|\geq 4$.

Let $T=V\backslash\{v_9\}\cup\{u_1\}$. If $|\bbV(CH(T))|\geq 6$, $u_1$ is
$5$-redundant in $T$ and $S_1=U,~S_2=V$, and $S_3=W$ is a separable partition
of $S$.

Therefore, assume that $|\bbV(CH(T))|\leq 5$. The three cases based on the size of
$|\bbV(CH(T))|$ are considered separately in the following lemmas.

\begin{figure*}[h]
\centering
\begin{minipage}[c]{0.5\textwidth}
\centering
\includegraphics[width=2.5in]
    {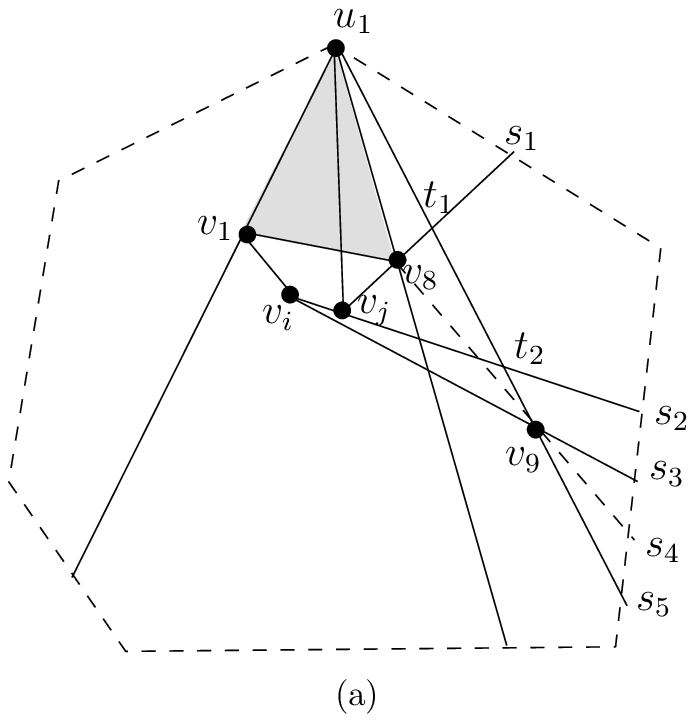}\\
\end{minipage}%
\begin{minipage}[c]{0.5\textwidth}
\centering
\includegraphics[width=2.4in]
    {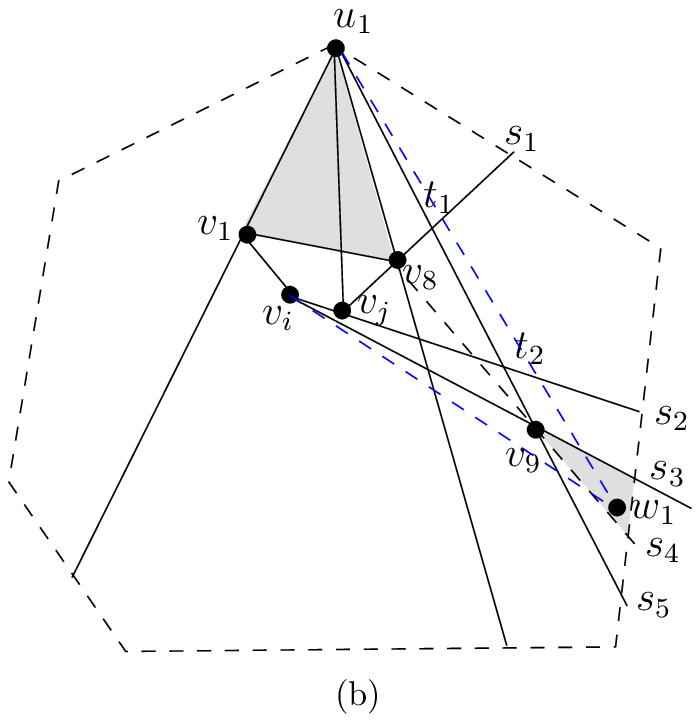}\\
\end{minipage}
\caption{Illustrations for the proof of Lemma \ref{lm:twomplusnine_ch5}.}
  \label{fig:case1} \vspace{-0.15in}
\end{figure*}

\begin{lemma}$S$ is separable whenever $|\bbV(CH(T))|=5$.
\label{lm:twomplusnine_ch5}
\end{lemma}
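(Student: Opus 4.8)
The plan is to keep $S_3=W$ fixed and reduce the whole problem to splitting $U\cup V$. First I would observe that every $w_i$ lies strictly inside $Cone(w_mu_1v_9)$, so the line $u_1v_9$ has all of $W$ strictly on the $w_m$-side while $U\cup V$ lies in the closed opposite halfplane; hence $CH(W)$ is automatically disjoint from the convex hull of \emph{any} subset of $U\cup V$. It therefore suffices to split the $m+9$ points of $U\cup V$ into a set $S_1$ of $m$ points and a set $S_2$ of $9$ points containing a $5$-hole, with $CH(S_1)\cap CH(S_2)=\emptyset$. Since $u_1$ lies outside $CH(V)$ and $V$ is seen from $u_1$ inside a cone of opening $<\pi$, the two extreme-angle points $v_1$ and $v_9$ are exactly the tangency vertices of $CH(V)$ from $u_1$; as $|\bbV(CH(V))|=3$ this forces $\bbV(CH(V))=\{v_1,v_j,v_9\}$ for some $1<j<9$, the remaining six points of $V$ lying interior to this triangle.

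Next I would build $S_2$ as a nine-point set of the form $\{u_1\}\cup(V\setminus\{v\})$, placing the apex $u_1$ into the middle part and pushing the discarded extreme vertex $v$ into $S_1$. The point of discarding an \emph{angular} extreme is that the three parts then become three wedges emanating from $u_1$: taking $v=v_1$ gives $S_1=\{u_2,\dots,u_m,v_1\}$ (all of angle at most that of $v_1$) and $S_2=\{u_1,v_2,\dots,v_9\}$ (the apex together with the points of angle at least that of $v_2$). Because $U\cup V$ lies in a cone of opening $<\pi$, each $CH(S_i)$ stays inside its own angular wedge and $u_1\notin CH(S_1)$, so $CH(S_1)$, $CH(S_2)$, $CH(S_3)$ are pairwise disjoint. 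To certify a $5$-hole in $S_2$ it is enough, by Corollary \ref{corollary:nine_points}, to show $|\bbV(CH(S_2))|\geq 4$; since $u_1$ is a vertex of $CH(S_2)$ and $CH(V\setminus\{v_1\})$ has at least three vertices, this holds \emph{unless} adjoining $u_1$ absorbs all but two of them.

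The hard part is precisely this degenerate possibility: $CH(\{u_1,v_2,\dots,v_9\})$ could collapse to the triangle $u_1v_2v_9$ (with $v_3,\dots,v_8$ inside), and then $S_2$ may fall into a forbidden class of Theorem \ref{th:layer} and carry no $5$-hole. This is exactly where the hypothesis $|\bbV(CH(T))|=5$ is used. Dropping the opposite extreme, $T=\{u_1\}\cup(V\setminus\{v_9\})$ is a convex pentagon with four interior points, so by Lemma \ref{lm:lm2} it always contains a $5$-hole and is a valid middle part. The price is that $v_9$ is now angularly separated from $\{u_2,\dots,u_m\}$ by all of $v_1,\dots,v_8$, so $S_1=\{u_2,\dots,u_m,v_9\}$ is no longer a single wedge and $CH(S_1)$ can cut through $CH(T)$; this is the genuine obstacle. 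In the remaining (degenerate) configuration I would argue that a single line still separates $S_1$ from $CH(T)$, using that $v_3,\dots,v_8$ all lie in the triangle $u_1v_2v_9$ to control the chain of $CH(S_1)$ joining the $u$-block to $v_9$ and to force it below the pentagon. The two sub-configurations that occur here, according to where $v_9$ falls relative to $CH(T)$, are the two cases drawn in Figure \ref{fig:case1}, and checking the separating line in each is the crux; the $5$-hole itself comes for free from Corollary \ref{corollary:nine_points} and Lemma \ref{lm:lm2}.
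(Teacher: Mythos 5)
Your reduction has a genuine gap, and it is located exactly where you say the ``crux'' is. Fixing $S_3=W$ and splitting $U\cup V$ along wedges at $u_1$ gives you only three viable middle sets: $V$, $\{u_1\}\cup(V\setminus\{v_1\})$, and $T=\{u_1\}\cup(V\setminus\{v_9\})$, and there are configurations in which all three fail at once. Since we may assume $|\bbV(CH(V))|=3$ (else $U,V,W$ already works), $V$ may lie in a class of Theorem \ref{th:layer} with no 5-hole; $\{u_1,v_2,\dots,v_9\}$ can collapse to the triangle $u_1v_2v_9$ and likewise carry no 5-hole; and for $T$ your asserted ``single line still separates $S_1$ from $CH(T)$'' is simply false in general. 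Concretely, place $u_2,\dots,u_m$ close to $u_1$ at angles below $v_1$, and let $v_9$ lie beyond the pentagon $CH(T)$ (the paper's Case 2, $v_9$ on the segment $t_2s_5$): the segment from $u_2$ to $v_9$ sweeps through the angular interval of $Cone(v_1u_1v_8)$ while still close to the apex $u_1$, hence passes through the interior of the pentagon $u_1v_1v_iv_jv_8$ near its vertex $u_1$. So $CH(\{u_2,\dots,u_m,v_9\})\cap CH(T)\neq\emptyset$ and no separating line exists. Nothing in the hypothesis $|\bbV(CH(T))|=5$ rules this out, and your appeal to $v_3,\dots,v_8\in\bbI(u_1v_2v_9)$ does not control the chord from the $u$-block to $v_9$, which is the offending object.

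The paper's proof shows why the obstruction is structural rather than a matter of a cleverer separating line: in precisely these configurations it abandons $S_3=W$ and recruits a point of $W$ into the middle set. Its case analysis is driven by where the points of $W$ lie (the cones $Cone(u_1t_1s_1)$, $Cone(s_1v_js_2)$, $Cone(s_5v_9s_4)$, etc.), producing partitions such as $S_2=V\setminus\{v_1,v_9\}\cup\{u_1,w_q\}$ with $S_3=W\setminus\{w_q\}\cup\{v_9\}$, or $S_2=V\setminus\{v_1\}\cup\{w_q\}$ with $u_1$ exiled to $S_3$; in the most degenerate situation (its Case 2.3) it analyzes the 10-point set $V\setminus\{v_1\}\cup\{u_1,w_1\}$ with triangular hull and argues that $u_1$ or $w_1$ is 5-redundant. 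Your remark that the two remaining sub-configurations ``are the two cases drawn in Figure \ref{fig:case1}'' misreads those figures: the paper's split is by the location of $W$-points, not only of $v_9$. To repair your proposal you would have to allow exchanges across the line $u_1v_9$ — at which point you are reconstructing the paper's argument, not simplifying it.
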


\begin{proof}
Let $\{u_1, v_1, v_i, v_j, v_8\}$ be the vertices of the convex hull of $T$. It
suffices to assume that $\bbI( u_1v_1v_i)$ and $\bbI( u_1v_1v_8)$ are empty in
$S$, otherwise either $v_1$ or $u_1$ is, respectively, 5-redundant and $S$ is
separable. Let the lines $\overrightarrow{v_jv_8}$ and  $\overrightarrow
{v_iv_j}$ intersect $\overrightarrow {u_1v_9}$ at the points $t_1$, $t_2$, and
$CH(S)$ at the points $s_1$, $s_2$, respectively (Figure \ref{fig:case1}(a)).
Now, we consider the following cases based on the location of the point $v_9$
on the line segment $u_1s_5$, where $s_5$ is the point where
$\overrightarrow{u_1v_9}$ intersects the boundary of $CH(S)$.

\begin{description}
\item[{\it Case} 1:]$v_9$ lies on the line segment $u_1t_2$.
This implies, $|\bbV(CH(V))|\geq 4$ and by Corollary \ref{corollary:nine_points},
$S_1=U$, $S_2=V$, and $S_3=W$ is a separable partition of $S$.

\item[{\it Case} 2:]$v_9$ lies on the line segment $t_2s_5$. Let $s_3$ and $s_4$ be the points where
the lines $\overrightarrow{v_iv_9}$ and $\overrightarrow{v_8v_9}$ intersects
$CH(S)$, respectively. (Note that if $v_9=s_5$, then the points $s_3$ and $s_4$ coincide with the point $v_9$.)
If  $Cone(u_1t_1s_1)\cap S$ is non-empty, let $w_q$ be
the first angular neighbor of $\overrightarrow{v_8u_1}$ in $Cone(u_1t_1s_1)$.
This implies, $|\bbV(CH(V\backslash \{v_1, v_9\}\cup\{u_1, w_q\}))|\geq 5$ and  by
Corollary \ref{corollary:nine_points} $S_1=U\backslash\{u_1\}\cup\{v_1\}$,
$S_2=V\backslash \{v_1, v_9\}\cup\{u_1, w_q\}$, and
$S_3=W\backslash\{w_q\}\cup\{v_9\}$ is a separable partition of $S$. So, assume
that $Cone(u_1t_1s_1)\cap S$ empty.

\begin{description}

\item[{\it Case} 2.1:] $Cone(s_1v_js_2)\cap W$ is non-empty.
Let $w_q$ be the first angular neighbor of $\overrightarrow{v_js_1}$ in
$Cone(s_1v_js_2)$. Then, $|\bbV(CH(V\backslash\{v_9\}\cup\{w_q\}))|\geq4$, and the
partition, $S_1=U$, $S_2=V\backslash \{v_9\}\cup\{w_q\}$, and
$S_3=W\backslash\{w_q\}\cup\{v_9\}$ is separable for $S$.

\item[{\it Case} 2.2:] $Cone(s_1v_js_2)\cap W$ is empty and $Cone(s_5v_9s_4)\cap W$ is non-empty. Let $w_q$ be the first angular neighbor of $\overrightarrow{v_9s_5}$ in $Cone(s_5v_9s_4)$. Observe that $|\bbV(CH(V\cup \{w_q\}))|\geq 4$ and $\bbI(v_8v_9w_q)\cap S$ is empty. Now, if $|\bbV(CH(V\cup \{w_q\}))|\geq 5$, then $v_1$ is clearly 5-redundant in $V\cup\{w_q\}$. Otherwise, Corollary \ref{cor:5redun} now implies that $v_1$ is 5-redundant in $V\cup\{w_q\}$. Therefore, the partition $S_1=U\backslash\{u_1\}\cup \{v_1\}$, $S_2=V\backslash\{v_1\}\cup\{w_q\}$, and $S_3=W\backslash\{w_q\}\cup\{u_1\}$ is separable for $S$.

\item[{\it Case} 2.3:] $Cone(s_1v_js_2)\cap W$ and $Cone(s_5v_9s_4)\cap W$ are both empty.
If $w_1$, the nearest angular neighbor of $\overrightarrow{u_1s_5}$ in $W$,
lies in $Cone(s_2v_is_3)$, $|\bbV(CH(V\backslash\{v_1\}\cup\{u_1, w_1\}))|=4$ and
$u_1$ is 5-redundant in $V\backslash\{v_1\}\cup\{u_1, w_1\}$ by Corollary
\ref{cor:5redun}. Therefore, $S_1=U\backslash\{u_1\}\cup\{v_1\}$,
$S_2=V\backslash\{v_1\}\cup\{w_1\}$, and $S_3 =W\backslash\{w_1\}\cup\{u_1\}$
is separable for $S$. Finally, consider that $w_1\in Cone(s_4v_9s_3)$ and let
$Z=V\backslash\{v_1\}\cup\{u_1, w_1\}$. Observe that $|\bbV(CH(Z))|=3$ (Figure
\ref{fig:case1}(b)). Now, since $|Z|=10$, $Z$ must contain a 5-hole. Note that
since $\bbI(u_1v_1v_8)$ is assumed to be empty in $S$, it follows that all the four vertices of the 4-hole
$u_1v_8v_9w_1$ cannot be a part of any 5-hole
in $Z$. Moreover, there cannot be a 5-hole in $Z$ with the points $u_1, v_9,
w_1$ or the points $u_1, v_8, v_9$ as vertices, since $Cone(s_5u_1w_1)$ and $Cone(u_1w_1v_8)$ are empty in $Z$. Emptiness of $Cone(s_5u_1w_1)\cap Z$ and $Cone(u_1w_1v_8)\cap Z$ also implies that there cannot be a 5-hole in $Z$ with both the points $u_1$ and $w_1$ as vertices. Thus, either $u_1$ or $w_1$ is
5-redundant in $Z$, and separability of $S$ follows. \hfill $\Box$
\end{description}
\end{description}
\end{proof}

\begin{figure*}[h]
\centering
\begin{minipage}[c]{0.5\textwidth}
\centering
\includegraphics[width=2.75in]
    {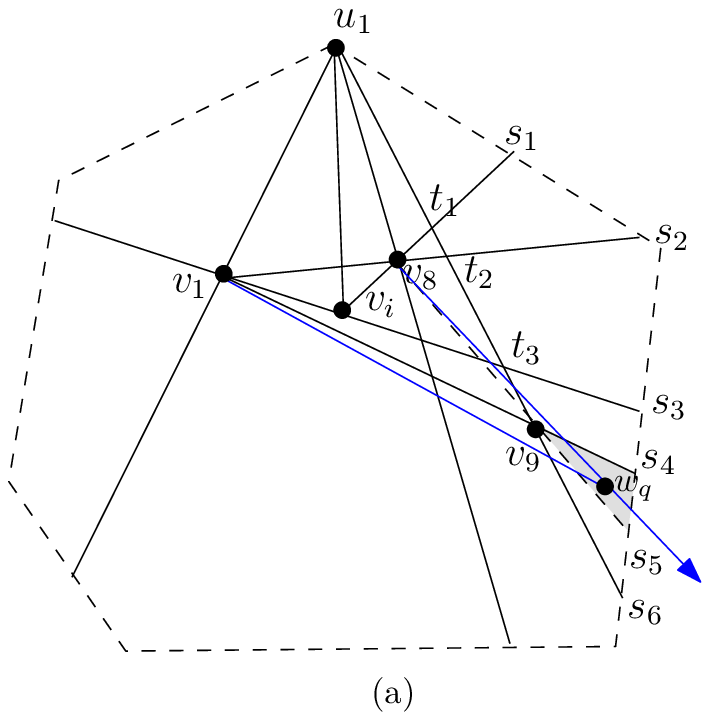}\\
\end{minipage}%
\begin{minipage}[c]{0.5\textwidth}
\centering
\includegraphics[width=2.58in]
    {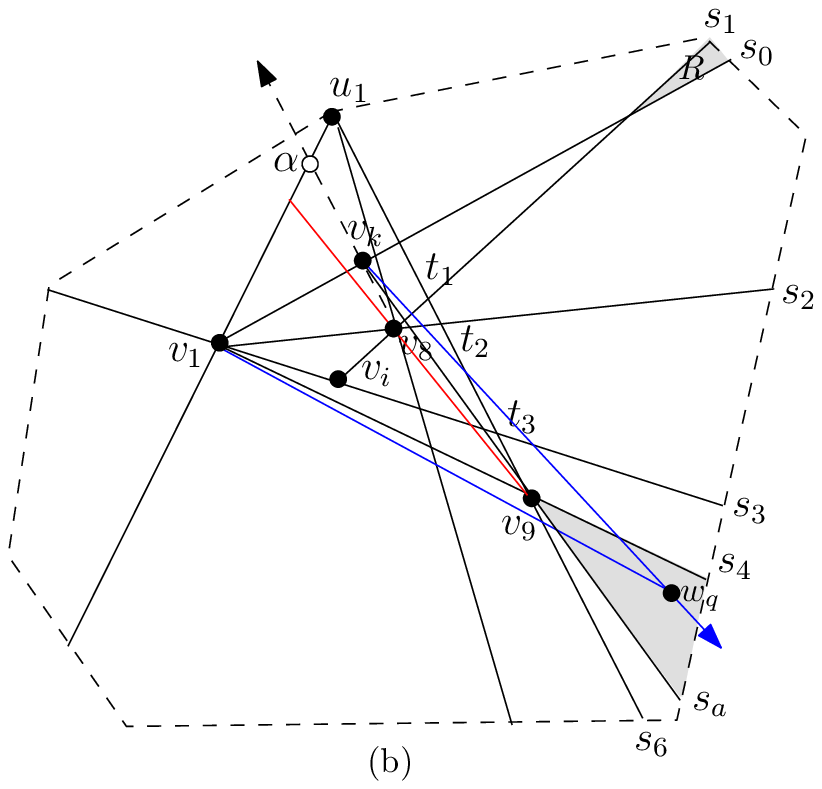}\\
\end{minipage}
\caption{Illustrations for the proof of Lemma \ref{lm:twomplusnine_ch4}: {\it
Case} 1 and {\it Case} 2.} \label{fig:case2} \vspace{-0.15in}
\end{figure*}

\begin{lemma}$S$ is separable whenever $|\bbV(CH(T))|=4$.
\label{lm:twomplusnine_ch4}
\end{lemma}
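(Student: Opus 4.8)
The plan is to follow the architecture of Lemma \ref{lm:twomplusnine_ch5}: reduce first to a rigid description of $V$, and then produce a separable partition by importing a single well-chosen point of $W$ into the middle block while shuffling $u_1$, $v_1$ and $v_9$ between the three blocks.

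First I fix the quadrilateral: write $CH(T)=u_1v_1v_iv_8$ in counter-clockwise order, so that the five interior points of $T$ split between the triangles $\bbI(u_1v_1v_i)$ and $\bbI(u_1v_iv_8)$ according to the angular order of $v_1,\dots,v_8$ about $u_1$ (the points $v_2,\dots,v_{i-1}$ in the first, $v_{i+1},\dots,v_7$ in the second). The natural partition $S_1=U$, $S_2=V$, $S_3=W$ is separable the moment $V$ contains a $5$-hole, so by Theorem \ref{th:layer} I may assume $V$ has none, whence $V$ is one of the two rigid types $L\{3,3,3\}$ or $L\{3,5,1\}$ and $|\bbV(CH(V))|=3$. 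Since then $V\setminus\{v_9\}$ also has no $5$-hole, $u_1$ is automatically not $5$-redundant in $T$; and as $|T|=9$ with $|\bbV(CH(T))|=4$, Corollary \ref{corollary:nine_points} still gives a $5$-hole in $T$, so every such $5$-hole must use the apex $u_1$.

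The construction then exploits that $u_1$ is adjacent on $CH(S)$ to both $u_2$ (edge $u_1u_2$) and $w_m$ (edge $u_1w_m$), so $u_1$ may be legitimately pushed onto either flank without destroying convexity, exactly the freedom used in Cases 2.1--2.3 of Lemma \ref{lm:twomplusnine_ch5}. I would case-split on the location of $v_9$ relative to the lines through $v_i,v_8$ and the diagonal $v_1v_8$, together with the emptiness of the cones separating $V$ from $W$. In each case the aim is a partition of the form $S_1=U,\ S_2=V\setminus\{v_9\}\cup\{w_q\},\ S_3=W\setminus\{w_q\}\cup\{v_9\}$, or $S_1=U\setminus\{u_1\}\cup\{v_1\},\ S_2=V\setminus\{v_1\}\cup\{w_q\},\ S_3=W\setminus\{w_q\}\cup\{u_1\}$, where $w_q$ is the first angular neighbor of a suitably directed edge; the angular order around $u_1$ certifies each triple as a genuine disjoint convex partition with $|S_1|=|S_3|=m$ and $|S_2|=9$. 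It then remains to show that the reconstituted $S_2$ has $|\bbV(CH(S_2))|\ge 4$, so that Corollary \ref{corollary:nine_points} applies, or to display an empty convex pentagon directly through Corollary \ref{cor:5redun}; here the rigid layer structure of $V$ --- an outer triangle together with either a middle pentagon and a single innermost point ($L\{3,5,1\}$) or two further triangular layers ($L\{3,3,3\}$) --- pins down which imported $w_q$ breaks the layering and exposes an empty pentagon.

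The main obstacle is the residual sub-case in which every admissible import still leaves $S_2$ with a triangular hull --- essentially the configuration where $V$ realizes $L\{3,5,1\}$ and the natural choice of $w_q$ merely reshuffles its layers. There one must pick $w_q$ as a precisely specified angular neighbor so that $w_q$, two vertices of the outer triangle of $V$, and one pentagon vertex form a convex pentagon whose interior has been emptied by the standing assumptions, all the while verifying that the displaced $v_9$ (or $u_1$) leaves $CH(S_3)$ convex and disjoint from $CH(S_2)$. Establishing both the existence and the exact angular placement of this single point of $W$, against the rigid interior of $V$, is where the entire geometric case analysis concentrates, and is the step I expect to be hardest.
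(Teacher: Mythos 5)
Your frame coincides with the paper's: the same quadrilateral $CH(T)=u_1v_1v_iv_8$, the same device of intersecting the rays through pairs of hull vertices with $\overrightarrow{u_1v_9}$, the same appeals to Corollaries \ref{cor:5redun} and \ref{corollary:nine_points}, and the same family of ``import one $w_q$, shuffle $u_1,v_1,v_9$'' partitions. Your preliminary reductions are essentially sound, though one step is unjustified as written: ``$V$ has no 5-hole, hence $V\setminus\{v_9\}$ has no 5-hole'' requires the observation that $v_9$, being the angularly last point of $V$ about $u_1$, lies strictly outside $CH(V\setminus\{v_9\})$, so no pentagon of $V\setminus\{v_9\}$ can contain it; this is true but you never say it. The appeal to Theorem \ref{th:layer} is a genuinely different organizing idea (the paper never invokes it in this lemma), but it buys less than you claim: a layer equivalence class fixes only the cardinalities $|L\{k,V\}|$, not the order type, so the ``rigid interior'' of $V$ does not pin down which imported $w_q$ exposes a pentagon. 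The paper instead cases directly on where $v_9$ sits among $t_1,t_2,t_3$ and on the emptiness of explicit cones such as $Cone(u_1v_8s_1)\cap W$, $Cone(s_6v_9s_5)\cap W$, and $Cone(s_5v_8s_2)\cap W$, each subcase naming a concrete $w_q$ and a concrete redundancy argument.

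The genuine gap is that everything after your setup is deferred, and what is deferred is the entire content of the lemma; moreover two of your blanket assertions fail. First, the claim that ``the angular order around $u_1$ certifies each triple as a genuine disjoint convex partition'' is false once points migrate: in $S_1=U$, $S_2=V\setminus\{v_9\}\cup\{w_q\}$, $S_3=W\setminus\{w_q\}\cup\{v_9\}$, the hull of $S_2$ pokes into the cone housing $W$ while $S_3$ acquires $v_9$, so the separating lines are no longer rays from $u_1$ through consecutive points; disjointness is exactly what the subcase hypotheses (e.g.\ emptiness of $\bbI(v_8v_9w_q)\cap S$ and of the flanking cones) are there to buy. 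Second, your menu of target partitions is too small. The paper also needs the two-in/two-out form $S_1=U\setminus\{u_1\}\cup\{v_1\}$, $S_2=V\setminus\{v_1,v_9\}\cup\{u_1,w_q\}$, $S_3=W\setminus\{w_q\}\cup\{v_9\}$ (its Cases 1.4 and 2.3), and in its Case 3 ($v_9$ on the segment $t_1t_2$) no point of $W$ is imported at all: there one shows via Corollary \ref{cor:5redun} that the interior of $CH(V\cup\{u_1\})$ is confined to $\bbI(v_9v_i\beta)$ and then either performs the double swap $S_1=U\setminus\{u_1,u_j\}\cup\{v_1,v_i\}$, $S_2=V\setminus\{v_1,v_i\}\cup\{u_1,u_j\}$, $S_3=W$, or uses the 5-redundancy of $v_i$ in $V\cup\{u_1\}$ with $S_2=V\setminus\{v_i\}\cup\{u_1\}$. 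Your plan, which insists that a single well-chosen point of $W$ always suffices, has no provision for this configuration, and you explicitly leave open the residual triangular-hull subcase that you correctly identify as the crux. As it stands the proposal is a plausible outline of the paper's architecture, not a proof.
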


\begin{proof}Suppose $\{u_1, v_1, v_i, v_8\}$ are the
vertices of the convex hull of $T$. Let the lines $\overrightarrow{v_iv_8}$,
$\overrightarrow{v_1v_8}$, and $\overrightarrow{v_1v_i}$ intersect
$\overrightarrow{u_1v_9}$ at the points $t_1$, $t_2$, $t_3$, and $CH(S)$ at the
points $s_1$, $s_2$, $s_3$, respectively (see Figure \ref{fig:case2}(a)). If
$v_9$ lies on the line segment $u_1t_1$ or $t_2t_3$, then $|\bbV(CH(V))|\geq 4$ and
$S_1=U$, $S_2=V$, and $S_3=W$ is separable for $S$. So, assume that $v_9$ lies
on the line segment $t_1t_2$, or on the line segment $t_3s_6$, where $s_6$ is
the point of intersection of $\overrightarrow{u_1v_9}$ and $CH(S)$. Now, we
consider the following cases.

\begin{description}
\item[{\it Case} 1:] $v_9$ lies on the line segment $t_3s_6$, and $\bbI( u_1v_1v_8)\cap S$ is empty.
Let $s_4$ and $s_5$ be the points where $\overrightarrow{v_1v_9}$ and
$\overrightarrow{v_8v_9}$ intersect the boundary of $CH(S)$, respectively.

\begin{description}
\item[{\it Case} 1.1:] $Cone(u_1v_8s_1)\cap W$ is non-empty.
If $w_q$ be the first angular neighbor of $\overrightarrow{v_8u_1}$ in
$Cone(u_1v_8s_1)$, then $|\bbV(CH(V\backslash\{v_9\}\cup\{w_q\}))|=4$. Hence,
$S_1=U$, $S_2= V\backslash\{v_9\}\cup\{w_q\}$, and
$S_3=W\backslash\{w_q\}\cup\{v_9\}$ is a separable partition.

\item[{\it Case} 1.2:] $Cone(u_1v_8s_1)\cap W$ is empty, and $Cone(s_6v_9s_5)\cap W$ is non-empty.
Let $w_q$ be the first angular neighbor of $\overrightarrow{v_9s_6}$ in
$Cone(s_6v_9s_5)$. Note that $CH(V\cup\{w_q\})$ is a quadrilateral and
$\bbI(v_8v_9w_q)\cap S$ is empty. This implies that $v_1$ is 5-redundant in
$V\cup\{w_q\}$ by Corollary \ref{cor:5redun}. Therefore,
$S_1=U\backslash\{u_1\}\cup\{v_1\}$, $S_2=V\backslash\{v_1\}\cup\{w_q\}$, and
$S_3=W\backslash\{w_q\} \cup\{u_1\}$ is separable for $S$.

\item[{\it Case} 1.3:] Both $Cone(u_1v_8s_1)\cap W$ and $Cone(s_6v_9s_5)\cap W$ are empty, but $Cone(s_5v_8s_2)\cap W$ is non-empty.
Let $w_q$ be the first angular neighbor of $\overrightarrow{v_8v_9}$ in $Cone(s_5v_8s_2)$.
To begin with, assume $w_q\in Cone(s_5v_8s_2)\backslash Cone(s_5v_9s_4)$. Then
$|\bbV(CH(V\cup\{w_q\}))|\geq 4$ and $V\cup\{w_q\}$ contains a 5-hole. Now, by
Corollary \ref{cor:5redun}, either $v_1$ or $w_q$ is 5-redundant in
$V\cup\{w_q\}$, and the separability of $S$ is immediate. Otherwise, $w_q\in
Cone(s_5v_9s_4)$, and $|\bbV(CH(V\cup\{w_q\}))|=3$ (Figure \ref{fig:case2}(a)). Now,
$V\cup\{w_q\}$ contains a 5-hole and at least one of $v_1$, $v_8$, and $w_q$ is
5-redundant in $V\cup\{w_q\}$. If $w_q$ is 5-redundant, the separability of $S$
is immediate. If $v_1$ is 5-redundant, the partition
$S_1=U\backslash\{u_1\}\cup\{v_1\}$, $S_2=V\backslash\{v_1\}\cup\{w_q\}$, and
$S_3=W\backslash\{w_q\}\cup\{u_1\}$ is a separable partition of $S$. Finally,
if $v_8$ is 5-redundant, then the partition $S_1=U$,
$S_2=V\backslash\{v_8\}\cup\{w_q\}$, and $S_3=W\backslash\{w_q\}\cup\{v_8\}$ is
a separable partition of $S$.

\item[{\it Case} 1.4:] $W\subset Cone(s_1v_8s_2)$. Let $w_q$ be the nearest angular neighbor of $\overrightarrow{v_is_1}$ in $Cone(s_1v_is_3)$. If $\bbI( u_1v_1v_i)\cap S$ is non-empty, then $|\bbV(CH(V\backslash\{v_1, v_9\}\cup\{u_1, w_q\}))|\geq 4$ and the partition $S_1=U\backslash\{u_1\}\cup\{v_1\}$, $S_2=V\backslash\{v_1, v_9\}\cup\{u_1, w_q\}$, and    $S_3=W\backslash\{w_q\}\cup\{v_9\}$ is separable for $S$. Otherwise, assume $\bbI( u_1v_1v_i)\cap S$ is empty. Let $w_1$ be the first angular neighbor of $\overrightarrow{u_1s_6}$ in $W$. Then, $|\bbV(CH(V\backslash\{v_1\}\cup\{w_1\}))|\geq 4$, and the partition $S_1=U\backslash\{u_1\}\cup\{v_1\}$, $S_2=V\backslash\{v_1\}\cup\{w_1\}$, and $S_3=W\backslash\{w_1\}\cup\{u_1\}$ is separable for $S$.

\end{description}

\begin{figure*}[h]
\centering
\begin{minipage}[c]{0.5\textwidth}
\centering
\includegraphics[width=2.5in]
    {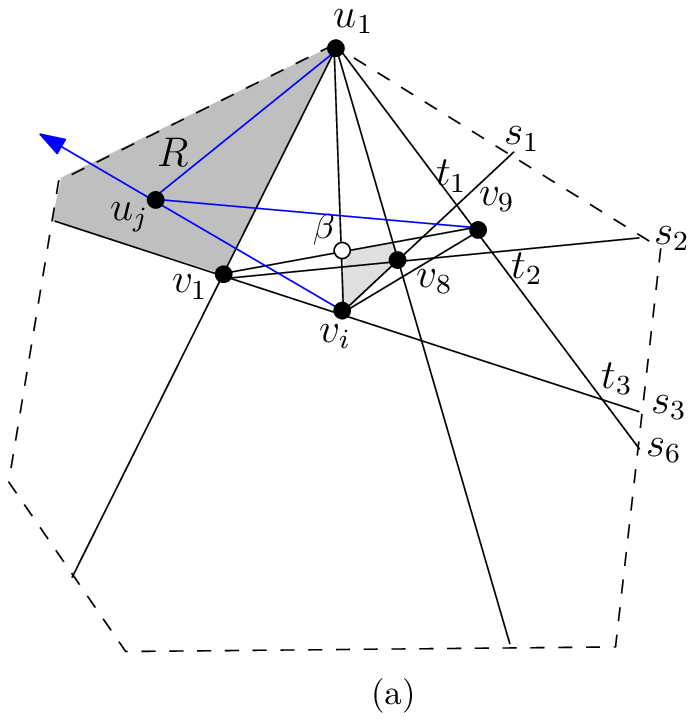}\\
\end{minipage}%
\begin{minipage}[c]{0.5\textwidth}
\centering
\includegraphics[width=2.5in]
    {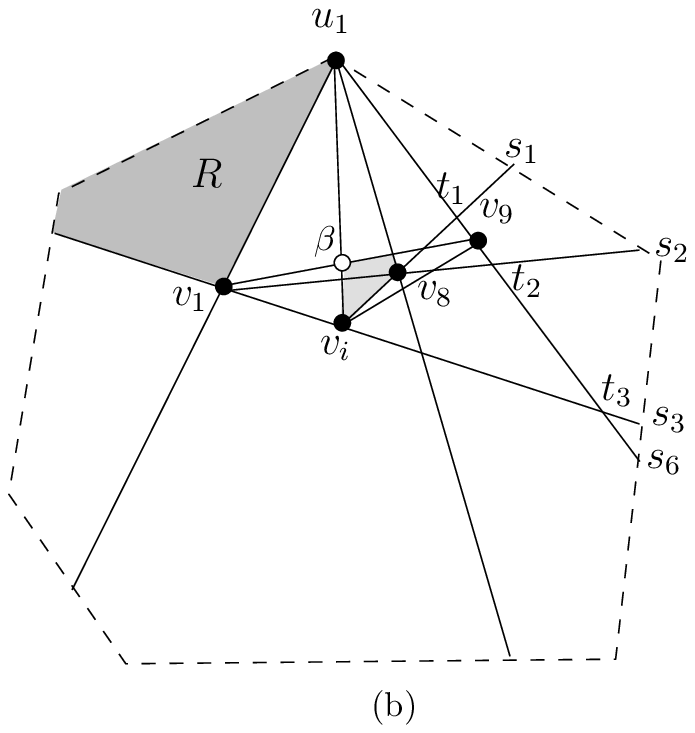}\\
\end{minipage}
\caption{Illustrations for the proof of Lemma \ref{lm:twomplusnine_ch4}: {\it
Case} 3.} \label{fig:case2_t1t2}\vspace{-0.15in}
\end{figure*}

\item[{\it Case} 2:]$v_9$ lies on the line segment $t_3s_6$, and $\bbI( u_1v_1v_8)\cap S$ is
non-empty. Let $v_k$ be the first angular neighbor of $\overrightarrow{v_8u_1}$
in $Cone(u_1v_8v_1)$, and let $s_0$, $s_4$ and $s_a$ be the points where
$\overrightarrow{v_1v_k}$, $\overrightarrow{v_1v_9}$ and
$\overrightarrow{v_kv_9}$ intersect $CH(S)$, respectively.
Note that if $v_k\in \overline\mathcal H(v_9v_8, u_1)\cap V$, then $|\bbV(CH(V))|\geq 4$ and the separability
of $S$ is immediate. Therefore, assume that $v_k\in \mathcal H(v_9v_8, u_1)\cap V$ (see Figure \ref{fig:case2}(b)). Let $\alpha$ be the point where $\overrightarrow{v_8v_k}$ intersects $\overrightarrow{u_1v_1}$. If $\bbI(v_1v_k\alpha)\cap V$ is non-empty, then $|CH(V)|\geq 5$, and the separability of $S$ is immediate. Therefore, assume that $\bbI(v_1v_k\alpha)\cap V$ is empty, that is, $\bbI(v_1v_ku_1)\cap V$ is empty.

\begin{description}
\item[{\it Case} 2.1:] $Cone(s_6v_9s_a)\cap W$ is non-empty.
Let $w_q$ be the first angular neighbor of $\overrightarrow{v_9s_6}$ in
$Cone(s_6v_9s_a)$. Then $|\bbV(CH(V\cup\{w_q\}))|=4$ and by Corollary
\ref{cor:5redun} either $v_1$ or $v_9$ is 5-redundant in $V\cup\{w_q\}$. The
separability of $S$ now follows easily.

\item[{\it Case} 2.2:] $Cone(s_6v_9s_a)\cap W$ is empty and $Cone(s_0v_ks_a)\cap W$ is non-empty.
Let $w_q$ be the first angular neighbor of $\overrightarrow{v_kv_9}$ in
$Cone(s_0v_ks_a)$. If $w_q\in Cone(s_0v_ks_a)\backslash Cone(s_4v_9s_a)$ then
$|\bbV(CH(V\backslash\{v_1\}\cup \{w_q\}))|\geq 4$, and the separability of $S$ is
immediate. Otherwise, assume $w_q\in Cone(s_4v_9s_a)$. Then
$|\bbV(CH(V\cup\{w_q\}))|=3$ and either $v_1, v_k$, or $w_q$ is 5-redundant in
$V\cup\{w_q\}$, and the separability of $S$ is immediate.

\item[{\it Case} 2.3:] Both $Cone(s_6v_9s_a)\cap W$ and $Cone(s_0v_ks_a)\cap W$ are
empty, but $Cone(u_1v_ks_0)\cap W$ is non-empty. Now, if $Cone(u_1v_8s_1)\cap
W$ is non-empty, the partition $S_1=U\backslash\{u_1\}\cup\{v_1\}$, $S_2=V\backslash \{v_1, v_9\}\cup\{u_1, w_i\}$,
and $S_3=W\backslash\{w_i\}\cup\{v_9\}$ is separable for $S$, where $w_i$ is the first angular neighbor of $\overrightarrow{v_8u_1}$ in $Cone(u_1v_8s_1)\cap W$. Therefore, assume that
$Cone(u_1v_8s_1)\cap W$ is empty. This implies, $W\subset R\cap S$, where $R$
is the shaded region as shown in Figure \ref{fig:case2}(b). Let $w_q$ be the
nearest angular neighbor of $\overrightarrow{v_iv_8}$ in $Cone(s_1v_is_3)$. If
$\bbI( u_1v_1v_i)\cap S$ is non-empty, then $|\bbV(CH(V\backslash\{v_1,
v_9\}\cup\{u_1, w_q\}))|\geq 4$ and the partition
$S_1=U\backslash\{u_1\}\cup\{v_1\}$, $S_2=V\backslash\{v_1, v_9\}\cup\{u_1,
w_q\}$, and $S_3=W\backslash\{w_q\}\cup\{v_9\}$ is separable for $S$.
Otherwise, assume $\bbI( u_1v_1v_i)\cap S$ is empty. Let $w_1$ be the first
angular neighbor of $\overrightarrow{u_1s_6}$ in $W$. Then,
$|\bbV(CH(V\backslash\{v_1\}\cup\{w_1\}))|\geq 4$, and the partition
$S_1=U\backslash\{u_1\}\cup\{v_1\}$, $S_2=V\backslash\{v_1\}\cup\{w_1\}$, and
$S_3=W\backslash\{w_1\}\cup\{u_1\}$ is separable for $S$.

\end{description}

\item[{\it Case} 3:]$v_9$ lies on the line segment $t_1t_2$. Observe that if either $u_1$ or $v_1$ is 5-redundant in $V\cup\{u_1\}$, then the separability of $S$ is immediate. Therefore, from Corollary \ref{cor:5redun}, it suffices to assume that all the points inside $CH(V\cup\{u_1\})$ must lie in $\bbI(v_9v_i\beta)$, where $\beta$ is the point of intersection of the diagonals of the quadrilateral $u_1v_1v_iv_9$. Next, suppose that $R\cap S$ is non-empty, where $R$ is the shaded region inside $CH(S)$ as shown in Figure \ref{fig:case2_t1t2}(a). Let $u_j\in R\cap S$ be the first angular neighbor of $\overrightarrow{v_iu_1}$ in $R$. Then $|\bbV(CH(V\backslash\{v_1\} \cup\{u_1, u_j\}))|=4$ and $v_i$ is 5-redundant in $V\backslash\{v_1\} \cup\{u_1, u_j\}$, since $\bbI(u_jv_iv_9)\cap S$ is non-empty (Corollary \ref{cor:5redun}). Hence, the partition of $S$ given by $S_1=U\backslash\{u_1, u_j\} \cup\{v_1, v_i\}, S_2=V\backslash\{v_1, v_i\} \cup\{u_1, u_j\}, S_3=W$ is separable. On the other hand, if $R\cap S$ is empty, then the partition $S_1=U\backslash\{u_1\}\cup\{v_i\}$, $S_2=V\backslash\{v_i\}\cup\{u_1\}$, and $S_3=W$ is separable, since $v_i$ is 5-redundant in $V\cup\{u_1\}$ by Corollary \ref{cor:5redun} (see Figure \ref{fig:case2_t1t2}(b)).
\end{description}
\end{proof}

\begin{lemma}$S$ is separable whenever $|\bbV(CH(T))|=3$.
\label{lm:twomplusnine_ch3}
\end{lemma}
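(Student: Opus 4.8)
The final statement is Lemma~\ref{lm:twomplusnine_ch3}, asserting that $S$ is separable when $|\bbV(CH(T))|=3$, where $T=V\backslash\{v_9\}\cup\{u_1\}$. Since the convex hull of the nine points $\{u_1,v_1,\ldots,v_8\}$ is now a triangle, its three vertices must be $u_1$, $v_1$, and $v_8$ (these being extreme by the angular ordering), with the remaining six points $v_2,\ldots,v_7$ in the interior. My approach mirrors the previous two lemmas: I would place $v_9$ relative to a system of reference lines radiating from $u_1$ through the hull vertices of $T$, and in each resulting region either directly exhibit a convex position forcing a $5$-hole in a nine-point block, or find a point $w_q\in W$ (or a swap involving $u_1$, $v_1$, $v_9$) that makes one of the boundary points $5$-redundant so that Corollary~\ref{cor:5redun} applies.

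\textbf{Key steps.} First I would set up the reference rays: extend $\overrightarrow{v_1v_8}$ and the lines through the interior/boundary structure of the triangle $u_1v_1v_8$ to meet $\overrightarrow{u_1v_9}$, partitioning the segment $u_1s$ (where $s$ is where $\overrightarrow{u_1v_9}$ meets $\partial CH(S)$) into subintervals, exactly as $t_1,t_2,t_3$ were used in Lemma~\ref{lm:twomplusnine_ch4}. The favourable case is when the placement of $v_9$ already gives $|\bbV(CH(V))|\geq 4$; then Corollary~\ref{corollary:nine_points} yields a $5$-hole in $V$ and the trivial partition $S_1=U$, $S_2=V$, $S_3=W$ is separable. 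Second, when $v_9$ falls in the remaining intervals so that $V$ is itself triangular, I would probe the cones emanating past $v_9$ and past the hull vertices of $T$ for points of $W$: if such a cone is non-empty, I promote its first angular neighbour $w_q$ into $S_2$ while demoting $v_9$ (or $u_1$) into $W$ (or $U$), arranging that the new nine-point set $V\backslash\{\cdot\}\cup\{\cdot,w_q\}$ has convex hull of size $\geq 4$, again invoking Corollary~\ref{corollary:nine_points}. Third, I would reduce to $5$-redundancy arguments via Corollary~\ref{cor:5redun}: by assuming the interior triangles like $\bbI(u_1v_1v_8)$ are empty (else $u_1$ or $v_1$ is immediately $5$-redundant), I can force the interior points into thin slivers and conclude that one of the auxiliary points is $5$-redundant in a ten-point set, allowing the required separable swap.

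\textbf{The main obstacle.} The residual case — when $v_9$ lies in the innermost interval and every relevant $W$-cone is empty, so that $W$ is confined to one narrow wedge — is where the work concentrates. Here neither a direct convex-position argument nor a single $W$-point swap is available, and I expect to need a delicate analysis of a ten-point set $Z=V\backslash\{v_1\}\cup\{u_1,w_1\}$ (or similar) with $|\bbV(CH(Z))|=3$: by emptiness of the surrounding cones I would argue, as in \emph{Case}~2.3 of Lemma~\ref{lm:twomplusnine_ch5}, that no $5$-hole of $Z$ can simultaneously use both $u_1$ and $w_1$ as vertices, forcing one of them to be $5$-redundant and thereby yielding the separable partition. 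Managing the geometric bookkeeping — ensuring the discarded vertex lands in $U$ or $W$ so that $|S_1|=|S_3|=m$ is preserved while $S_2$ retains its $5$-hole — is the bulk of the routine effort, but the genuine difficulty is establishing that this last triangular ten-point configuration cannot avoid a usable $5$-redundancy.
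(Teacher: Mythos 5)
Your skeleton coincides with the paper's up to the last case: the paper also takes $\bbV(CH(T))=\{u_1,v_1,v_8\}$, intersects the rays $\overrightarrow{v_jv_8}$ and $\overrightarrow{v_iv_8}$ (where $v_i$, $v_j$ are the first angular neighbors of $\overrightarrow{v_8u_1}$ and $\overrightarrow{v_8v_1}$ in $Cone(u_1v_8v_1)$) with $\overrightarrow{u_1v_9}$ to get $t_1,t_2$, disposes of $v_9\in u_1t_1$ via the swap $S_2=V\backslash\{v_1\}\cup\{u_1\}$ and of $v_9\in t_2s_4$ via the trivial partition, and then, with $v_9\in t_1t_2$ and $\bbV(CH(V))=\{v_1,v_k,v_9\}$, probes four cones ($Cone(u_1v_8t_1)$, $Cone(s_4v_9s_3)$, $Cone(s_3v_9s_2)$, $Cone(t_1v_8s_2)$) for points of $W$, promoting a first angular neighbor $w_q$ into $S_2$ with the size-preserving swaps you describe. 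All of that matches your outline, though note that the paper's Case 3 works by exhibiting the explicit 5-hole $v_1v_jv_8v_9w_q$, using the point $v_j$ that your setup never introduces.

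The genuine gap is exactly where you say the work concentrates, and your proposed gadget is not the one that closes it. You suggest analyzing $Z=V\backslash\{v_1\}\cup\{u_1,w_1\}$ and proving, as in {\it Case} 2.3 of Lemma \ref{lm:twomplusnine_ch5}, that no 5-hole of $Z$ uses both $u_1$ and $w_1$; but that earlier argument rested on the emptiness of two specific cones ($Cone(s_5u_1w_1)$ and $Cone(u_1w_1v_8)$) which held there by construction, and you give no reason why analogues hold in the triangular-$T$ configuration --- you yourself flag this as an unresolved expectation. The paper avoids the issue entirely with a cleaner choice: in the residual sub-case it takes $w_q$ to be the first angular neighbor of $\overrightarrow{v_kv_9}$ in $Cone(u_1v_kv_9)$ and sets $Z=V\cup\{w_q\}$ (keeping all of $V$, not discarding $v_1$). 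If $w_q\notin Cone(s_2v_9s_1)$ then $|\bbV(CH(Z))|=4$ and Corollary \ref{cor:5redun} makes $v_1$ 5-redundant; otherwise $|\bbV(CH(Z))|=3$ with hull vertices $v_1$, $v_k$, $w_q$, and since $|Z|=10=H(5)$ a 5-hole exists, while a convex pentagon can use at most two vertices of a triangular hull, so one of $v_1$, $v_k$, $w_q$ is 5-redundant unconditionally --- no emptiness analysis needed. The point of this choice is that \emph{each} of the three possibilities admits a legal swap preserving $|S_1|=|S_3|=m$ ($v_1\leftrightarrow u_1$, $v_k\leftrightarrow w_q$, or simply dropping $w_q$ back into $W$); with your $Z$, which already spends the $u_1\leftrightarrow v_1$ swap, the corresponding bookkeeping does not come for free. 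So the missing idea is concrete: pick the ten-point set so that its triangular hull consists of three points that are all individually expendable, and replace the delicate ``no 5-hole uses both'' claim by the pigeonhole on hull vertices.
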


\begin{proof} Let $\bbV(CH(T))=\{u_1, v_1, v_8\}$.
Let $v_i$ and $v_j$ be the first angular neighbors of $\overrightarrow{v_8u_1}$
and $\overrightarrow{v_8v_1}$ respectively in $Cone(u_1v_8v_1)$. Let
$\overrightarrow{v_jv_8}$ and $\overrightarrow{v_iv_8}$ intersect
$\overrightarrow{u_1v_9}$ at $t_1$ and $t_2$, respectively (Figure
\ref{fig:case3}(a)). If $v_9$ lies on the line segment $u_1t_1$,
$|\bbV(CH(V\backslash\{v_1\}\cup \{u_1\}))|\geq 4$ and by Corollary
\ref{corollary:nine_points}, $V\backslash\{v_1\}\cup\{u_1\}$ contains a 5-hole.
Thus, $S_1=U\backslash\{u_1\}\cup\{v_1\}$, $S_2=V\backslash\{v_1\}\cup\{u_1\}$,
and $S_3=W$ is a separable partition of $S$. Similarly, if $v_9$ lies on the
line segment $t_2s_4$, where $s_4$ is the point where $\overrightarrow{u_1v_9}$
intersects the boundary of $CH(S)$, then $|\bbV(CH(V))|\geq 4$, and $S_1=U$, $S_2=V$,
and $S_3=W$ is separable for $S$.

\begin{figure*}[h]
\centering
\begin{minipage}[c]{0.5\textwidth}
\centering
\includegraphics[width=2.5in]
    {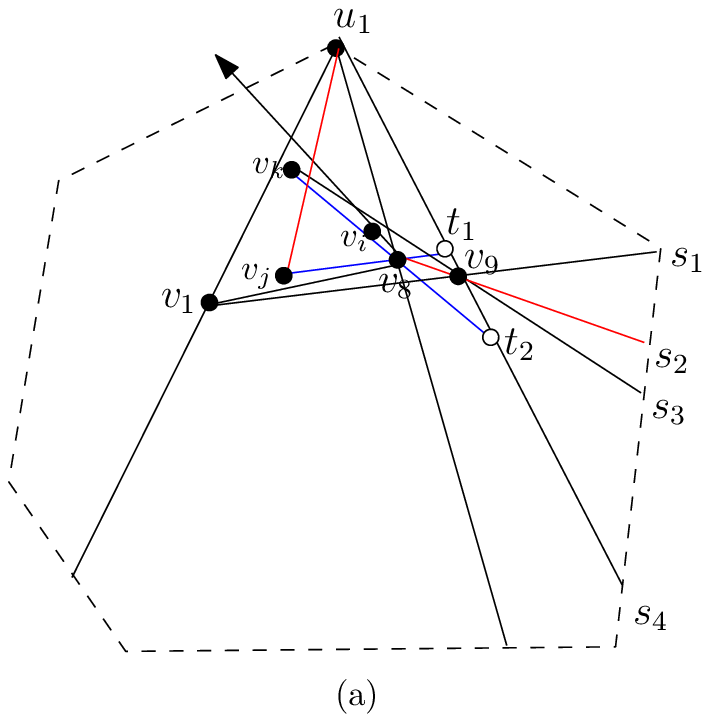}\\
\end{minipage}%
\begin{minipage}[c]{0.5\textwidth}
\centering
\includegraphics[width=2.5in]
    {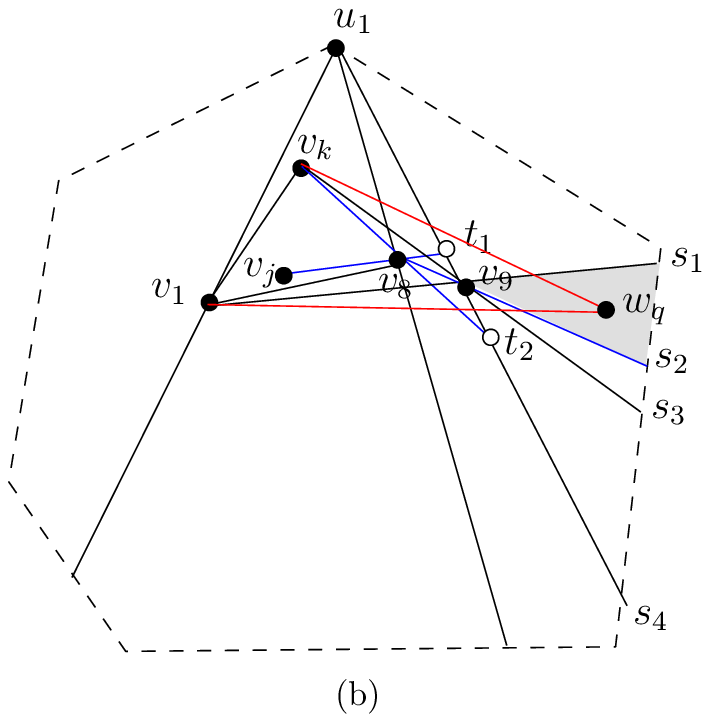}\\
\end{minipage}
\caption{Illustrations for the proof of Lemma \ref{lm:twomplusnine_ch3}.}
  \label{fig:case3} \vspace{-0.15in}
\end{figure*}

Therefore, $v_9$ lies on the line segment $t_1t_2$. Clearly, $S$ is separable
unless $|\bbV(CH(V))|=3$. Let $\bbV(CH(V))=\{v_1, v_k, v_9\}$. (Note that $v_k$ need
not be the point $v_i$ as shown in Figure \ref{fig:case3}(a)). Let $s_1$, $s_2$, and $s_3$ be the points where $\overrightarrow{v_1v_9}$, $\overrightarrow{v_8v_9}$, and $\overrightarrow{v_kv_9}$ intersect $CH(S)$,
respectively. Now, we have the following cases:

\begin{description}
\item[{\it Case} 1:] $Cone(u_1v_8t_1)\cap S$ is non-empty. Let $w_q$ be the first angular neighbor of
$\overrightarrow{v_8u_1}$, in $Cone(u_1v_8t_1)$. This implies,
$|\bbV(CH(V\backslash\{v_1, v_9\}\cup\{u_1, w_q\}))|\geq 4$, and
$S_1=U\backslash\{u_1\}\cup\{v_1\}$, $S_2=V\backslash\{v_1, v_9\}\cup\{u_1,
w_q\}$, and $S_3=W\backslash\{w_q\}\cup\{v_9\}$ is a separable partition of
$S$.

\item[{\it Case} 2:] $Cone(u_1v_8t_1)\cap S$ is empty and $Cone(s_4v_9s_3)\cap S$ is non-empty.
Suppose, $w_q$ is the first angular neighbor of $\overrightarrow{v_9s_4}$ in
$Cone(s_4v_9s_3)$. Since $|\bbV(CH(V\cup\{w_q\}))|\geq 4$, either $v_1$ or $v_9$ is
5-redundant in $V\cup\{w_q\}$ by Corollary \ref{cor:5redun}. Thus, either
$S_1=U\backslash\{u_1\}\cup\{v_1\}$, $S_2=V\backslash\{v_1\}\cup\{w_q\}$, and
$S_3=W\backslash\{w_q\}\cup\{u_1\}$ or $S_1=U$,
$S_2=V\backslash\{v_9\}\cup\{w_q\}$, and $S_3=W\backslash\{w_q\}\cup\{v_9\}$
is, respectively, separable for $S$.

\item[{\it Case} 3:] $Cone(u_1v_8t_1)\cap S$ and $Cone(s_4v_9s_3)\cap S$ are empty but  $Cone(s_3v_9s_2)\cap S$ is non-empty.
If $w_q$ is the first angular neighbor of $\overrightarrow{v_9s_3}$ in
$Cone(s_3v_9s_2)$, then
$v_1v_jv_8v_9w_q$ is a 5-hole, and $S_1=U$,
$S_2=V\backslash\{v_k\}\cup\{w_q\}$, and $S_3=W\backslash\{w_q\}\cup\{v_k\}$ is
separable for $S$.

\item[{\it Case} 4:] The three sets $Cone(u_1v_8t_1)\cap S$, $Cone(s_4v_9s_3)\cap S$, and $Cone(s_3v_9s_2)\cap S$ are all empty,
but $Cone(t_1v_8s_2)\cap S$ is non-empty. Let $w_q$ be the first angular
neighbor of $\overrightarrow{v_kv_9}$ in $Cone(u_1v_kv_9)$. Clearly, $w_q\in
Cone(t_1v_8s_2)$.
\begin{description}
\item[{\it Case} 4.1:] $w_q\in Cone(t_1v_8s_2)\backslash Cone(s_2v_9s_1)$.
In this case, $|\bbV(CH(V\cup\{w_q\}))|=4$ and $v_1$ is 5-redundant in
$V\cup\{w_q\}$ by Corollary \ref{cor:5redun}. Then the partition
$S_1=U\backslash\{u_1\}\cup\{v_1\}$, $S_2=V\backslash\{v_1\}\cup\{w_q\}$, and
$S_3=W\backslash\{w_q\}\cup\{u_1\}$ is separable for $S$.

\item[{\it Case} 4.2:] $w_q\in Cone(s_2v_9s_1)$ (see Figure \ref{fig:case3}(b)). Let $Z=V\cup\{w_q\}$.
Observe, $|\bbV(CH(Z))|=3$ and $Z$ must contain a 5-hole, since $|Z|=10$. Now, either
$v_1$, $v_k$, or $w_q$ is 5-redundant in $Z$. If $w_q$ is 5-redundant, the
separability of $S$ is immediate. If $v_1$ is 5-redundant, the partition
$S_1=U\backslash\{u_1\}\cup\{v_1\}$, $S_2=V\backslash\{v_1\}\cup\{w_q\}$, and
$S_3=W\backslash\{w_q\}\cup\{u_1\}$ is a separable partition of $S$. Finally,
if $v_k$ is 5-redundant, then the partition $S_1=U$,
$S_2=V\backslash\{v_k\}\cup\{w_q\}$, and $S_3=W\backslash\{w_q\}\cup\{v_k\}$ is
a separable partition of $S$. \hfill $\Box$
\end{description}
\end{description}
\end{proof}

This finishes the analysis of all the different cases, and completes the proof of Theorem
\ref{th:twomplusnine}.


\section{Conclusion}
\label{c:conclusion}

In this paper we address problems concerning the existence of disjoint 5-holes
in planar point sets. We prove that every set of 19 points in the plane, in
general position, contains two disjoint 5-holes. Next, we show that any set of
$2m+9$ points in the plane can be subdivided into three disjoint convex regions
such that one contains a set of 9 points which contains a 5-hole, and the
others contain $m$ points each, where $m$ is a positive integer. Combining
these two results we show that the number of disjoint empty convex pentagons in
any set of $n$ points in the plane in general position, is at least
$\lfloor\frac{5n}{47}\rfloor$. This bound has been further improved to
$\frac{3n-1}{28}$ for infinitely many $n$.

In other words, we have shown that $H(5, 5)\leq 19$. This improves upon the
results of Hosono and Urabe \cite{hosono,kyotocggt}, where they showed $17\leq
H(5,5)\leq 20$. There is still a gap between the upper and lower bounds of
$H(5, 5)$, which probably requires a more complicated and detailed argument to
be settled.

However, we are still quite far from establishing non-trivial bounds on
$F_6(n)$ and $H(6, \ell)$, for $0\leq \ell\leq 6$, since the exact value of
$H(6)=H(6, 0)$ is still unknown. The best known bounds are $H(6)\leq ES(9)\leq
1717$ and $H(6)\geq 30$ by Gerken \cite{gerken} and Overmars
\cite{overmarsdcg}, respectively.


\bibliographystyle{IEEEbib}
\bibliography{strings,refs,manuals}

\end{document}